\newcommand{\by}{\bar{Y}}
\newcommand{\ccf}{\textsc{cf}}
\newcommand{\dcf}{d_{\textsc{cf},p}}
\newcommand{\hte}{\hat{\vartheta}}
\newcommand{\iot}{\int_{0}^{t}}
\newcommand{\ist}{\int_{s}^{t}}
\newcommand{\ott}{[0,T]}
\newcommand{\1}{{\bf 1}}
\newcommand{\tv}{\textsc{tv}}
\newcommand{\xrn}{\xrightarrow}
\newcommand{\HZERO}{\mathbf{(H_0)}}
\newcommand{\HUNW}{\mathbf{(C_w)}}
\newcommand{\HUNS}{\mathbf{(C_s)}}
\newcommand{\HDEUX}{\mathbf{(I_w)}}
\newcommand{\HTROIS}{\mathbf{(I_s)}}
\newcommand{\distdomi}{{\cal D}_p}
\newcommand{\R}{\mathbb R}
\newcommand{\N}{\mathbb N}
\newcommand{\ES}{\mathbf{E}}
\newcommand{\ER}{\mathbb{R}}
\newcommand{\be}{\mathbf{E}}
\newcommand{\bp}{\mathbf{P}}
\newcommand{\PE}{\mathbf{P}}
\newcommand{\cac}{\mathcal C}
\newcommand{\cd}{\mathcal D}
\newcommand{\cf}{\mathcal F}
\newcommand{\cl}{\mathcal L}
\newcommand{\cm}{\mathcal M}
\newcommand{\cn}{\mathcal N}
\newcommand{\cw}{\mathcal W}
\newcommand{\al}{\alpha}
\newcommand{\ep}{\varepsilon}
\newcommand{\ga}{\gamma}
\newcommand{\ka}{\kappa}
\newcommand{\oom}{\Omega}
\newcommand{\si}{\sigma}
\newcommand{\te}{\vartheta}
\newcommand{\tte}{\Theta}
\newcommand{\vp}{\varphi}
\newcommand{\lp}{\left(}
\newcommand{\rp}{\right)}
\newcommand{\lc}{\left[}
\newcommand{\rc}{\right]}
\newcommand{\lcl}{\left\{}
\newcommand{\rcl}{\right\}}
\newcommand{\lla}{\left\langle}
\newcommand{\rra}{\right\rangle}
\newcommand{\cal}{\mathcal}
\newtheorem{theorem}{Theorem}[section]
\newtheorem{lemma}[theorem]{Lemma}
\newtheorem{notation}[theorem]{Notation}
\newtheorem{proposition}[theorem]{Proposition}
\theoremstyle{remark}
\newtheorem{remark}[theorem]{Remark}
\theoremstyle{remark}
\numberwithin{equation}{section}
\newcommand{\bean}{\begin{eqnarray*}}
\newcommand{\eean}{\end{eqnarray*}}
\newcommand{\ben}{\begin{enumerate}}
\newcommand{\een}{\end{enumerate}}
\newcommand{\beq}{\begin{equation}}
\newcommand{\eeq}{\end{equation}}
\begin{document}

\title[Drift estimation for SDEs with additive fractional noise]{A general drift estimation procedure for stochastic differential equations with additive fractional noise}

\date{\today}

\author[F. Panloup \and S. Tindel \and M. Varvenne]
{Fabien Panloup \and Samy Tindel \and Maylis Varvenne}

\address{Fabien Panloup: Larema, Universit\'e d'Angers, 2 Bd Lavoisier, 49045 Angers Cedex 01, France}	
								
\address{Samy Tindel: Department of Mathematics, Purdue University, 150 N. University Street, West Lafayette, IN 47907, United States}

\address{Maylis Varvenne : Institut de Math\'ematiques de Toulouse, Universit\'e Paul Sabatier, 118 Route de Narbonne, 31062 Toulouse Cedex 9, France}

\date{\today}

\begin{abstract}
In this paper we consider the drift estimation problem for a general differential equation driven by an additive multidimensional fractional Brownian motion, under ergodic assumptions on the drift coefficient. Our estimation procedure is based on the identification of the invariant measure, and we provide consistency results as well as some information about the convergence rate. We also give some examples of coefficients for which the identifiability assumption for the invariant measure is satisfied.
\end{abstract}

\thanks{S. Tindel is supported by the NSF grant  DMS-1613163}

\subjclass[2010]{62M09; 62F12}
\date{\today}
\keywords{fractional Brownian motion, parameter estimation, least  square procedure, ergodicity}

\maketitle

{
\hypersetup{linkcolor=black}
 \tableofcontents 
}

\section{Introduction}
\label{sec:intro}

Let $B$ be a $d$-dimensional fractional Brownian motion with Hurst parameter $H\in(0,1)$ defined on a complete probability space $(\oom,\cf,\bp)$. We recall  that $B$ is a centered Gaussian process. Its law is thus characterized by its covariance function, which is defined by
$$
\be \big[  B_t^i \, B_s^j \big] = \frac 12 \lp t^{2H} + s^{2H} - |t-s|^{2H}  \rp \, \1_{ \{ 0\}}(i-j),
\qquad s,t\in\R.
$$
The variance of the   increments of $B$ is  then given by
\begin{equation}\label{eq:var-increments-fbm}
\be \lc | B_t^i - B_s^i |^2 \rc = |t-s|^{2H}, \qquad s,t\in\R, \quad i=1,\ldots, m,
\end{equation}
 and this implies that  almost surely the fBm paths are $\gamma$-H\"{o}lder
  continuous for any $\gamma<H$. 
  
\smallskip

In this article, we will consider the following  $\R^d$-valued stochastic differential equation driven by $B$:
\begin{equation}\label{eq:sde}
Y_t=y_0+\iot  b_{\te_0}(Y_s) \, ds +  {\sigma B_t}
, \qquad t\in\ott.
\end{equation}
Here $y_0 \in \mathbb{R}^d$ is a given initial condition, $B=(B^{1}, \ldots, B^{d})$ is the aforementioned fractional Brownian motion (fBm), the unknown parameter 
$\te_0$ lies in a certain set $\tte$ which will be specified later on, $\{b_{\te}(\cdot), \, \te\in\tte \}$ is a known family of  drift coefficients with $b_{\te}(\cdot):\R^d\to\R^d $, and $\si$ is a {$d\times d$}-matrix which is supposed to be known. More precisely, we do not discuss here the problem of estimation of the diffusion parameter $\si$ and of the Hurst index $H$ (on this topic, see $e.g.$ \cite{coeur},  \cite{istas} or \cite{VT}). For the sake of simplicity, we also assume throughout the paper that $\si$ is invertible (on this topic, see Remark~\ref{rem:invertsig}). Our aim is to get an accurate estimation of $\te_{0}$ according to some discrete observations of $Y$.

\smallskip
{When $H=\frac{1}{2}$, $i.e.$ when $(Y_t)_{t\ge0}$ is a diffusion process, the drift estimation for solutions of stochastic differential equations is a widely investigated topic. Among many references devoted to parametric estimation procedures for discretely observed elliptic diffusions, let us quote the early contribution \cite{kessler97}. For adaptive and non parametric methods, one may refer to  \cite{comte-genon, dalalyan, hof, kutoyants,papavasiliou,strauch} and to the references therein. }
In the fractional setting, the estimation problem for the drift term in equation \eqref{eq:sde} has {also} received a lot of attention in the recent past (see e.g \cite{BEO,HN-foup,KL,LB,Rao,TV}). However, the following restrictions hold in all those contributions: 

\noindent
$\bullet$
The coefficient $b_{\te_0}(Y_s)$ is of the form $\te_{0} b(Y_s)$ or even $\te_{0} \,Y_s$ when Ornstein-Uhlenbeck processes are involved.

\noindent
$\bullet$
The observation is either in continuous time or a discretized version of continuous observations.

\noindent
$\bullet$
Rates of convergence of the estimators are not computed, a noticeable exception being the central limit theorems obtained in \cite{HN-foup}.

\noindent
Let us also mention the nonparametric method put forward in the interesting recent paper~\cite{CM}. The context in \cite{CM} is much more general than in the aforementioned references, but the estimation procedure is based on the observation of several paths of \eqref{eq:sde}. This makes its practical implementation delicate.

With these considerations in mind, let us recall that the article \cite{NT} proposed an estimator valid for a wide class of functions $b$ in \eqref{eq:sde}, directly based on discrete observations of the process. This estimator is obtained through a least square procedure which is easily implemented. It is fairly general, but still exhibits some gaps that 
we aim at filling in this paper.
{Indeed, on the one hand \cite{NT} only focuses on the case $H>1/2$.
This obviously facilitates many of the stochastic calculus manipulations invoked in order to analyze the convergence of estimators. On the other hand \cite{NT} crucially assumes that the drift $b_{\te}(\cdot)$ has a gradient shape. This is essential in order to identify the long-time behavior of certain stochastic integrals with respect to the underlying fBm (see Lemma 3.2 of \cite{NT} for details). 
Eventually \cite{NT} also required the following \textit{identifiability} assumption:} 
\begin{equation}\label{eq:cdt-identifiability-strong}
\mathbf{E} \lc |  b_{\te_0}( \bar{Y}_0)|^2 \rc = \mathbf{E} \lc|  b_{\te}( \bar{Y}_0) |^{2} \rc
\quad\mbox{iff}\quad
\te=\te_0.\\
\end{equation}
{Even though \eqref{eq:cdt-identifiability-strong} can be considered as a quite standard hypothesis in an ergodic framework (see e.g \cite{kessler97} for a similar requirement), asking the class of models to be identifiable in this sense is mostly adapted to the drifts of the form $b_\te(\cdot)=\te h(\cdot)$. It may be restrictive or at least, difficult to check in practice.}\\

{In this paper, we thus aim at filling some of the aforementioned gaps. Our initial objective is to alleviate the identifiability condition. More precisely, instead of assuming that the model can be identified through the integral of one very specific function with respect to the invariant distribution, we will now just suppose that the model is identifiable by its invariant distribution, $i.e.$ that two models are equal if and only if they have the same stationary distribution. We also get rid of the restriction on $H$ and on the gradient shape of $b_\te$ by considering an approach which essentially avoids to use stochastic calculus. In other words, our consistence and rate of convergence results will be mainly based on the study of the long-time behavior of distances between paths built with the same noise. In the additive noise case we are handling, those distances have the advantage to be regular. Eventually, even if our ergodic type assumptions are similar to those in \cite{NT} (and called $\HUNS$ in the current paper),  we also obtain a (weak) consistence result under a weaker assumption (called $\HUNW$ below), where contractivity of the drift is only assumed out of a compact set. 
}
%
%
%
%
%
%

\section{Main results}

In this section we will first give some general notation which will be used throughout the paper. Then we will specify our assumptions on the coefficients of \eqref{eq:sde} and describe the estimator we are considering. Eventually we give our almost sure convergence result as well as the convergence rate we have been able to obtain.

\subsection{Notation}\label{subsection:notation}
We consider the set of parameters $\tte$ as a subset of $\R^{q}$ for $q\ge 1$.
Let $f:\R^d\times\tte\to\R$ be a $\cac^{p_1,p_2}$ function, where $p_1,p_2$ are two integers greater than 1. Then for any $p\le p_{1}$ and any tuple $(i_1,\ldots i_{p})\in\{1,\ldots,d\}^{p}$, we set $\partial^{i_1\ldots i_{p}}_x f$ for $\frac{\partial^{p} f}{\partial x_{i_1}\ldots \partial x_{i_{p}}}$. 
 Analogously, for $p\le p_{2}$ we use  
the notation $\partial^{i_1\ldots i_{p}}_{\vartheta} f$ for $\frac{\partial^{p} f}{\partial \vartheta_{i_1}\ldots \partial \vartheta_{i_{p}}}$
whenever $(i_1,\ldots i_{p})\in\{1,\ldots,q\}^{p}$. Moreover, we will write 
$\partial_x  f$ (resp. $\partial_{\vartheta} f$) for the Jacobi-matrices
$(\partial_{x_1} f, \ldots, \partial_{x_d}f)$  
(resp. $(\partial_{\te_1} f, \ldots, \partial_{\te_q}f) $).

Let ${\cal M}_1(\R^d)$ denotes the set of probability measures on $\R^d$. 
We say that  $d$ is a distance on  ${\cal M}_1(\R^d)$ if it metrizes its usual topology, namely the weak convergence topology. Among those distances we will consider the $p$-Wasserstein distance, which is defined as follows: for every $\nu,\mu\in{\cal M}_1(\R^d)$, we introduce the set $\cac(\nu,\mu)$ of couplings between $\nu$ and $\mu$, that is
\begin{equation}\label{eq:def-couplings}
\cac(\nu,\mu)
=
\lcl  (X,Y); \,  {\cal L}(X)=\nu, \, {\cal L}(Y)=\mu \rcl.
\end{equation}
Then the $p$-Wasserstein distance is written as
\begin{equation}\label{eq:defwp}
{\cal W}_p(\nu,\mu)=\inf\lcl \ES[|X-Y|^p]^{\frac{1}{p}} ; \,  (X,Y)\in\cac(\nu,\mu) \rcl .
\end{equation}
\begin{remark} The distance ${\cal W}_1$ can also be represented as
\begin{equation}\label{eq:W1_Lip}
{\cal W}_1(\nu,\mu)=\sup\{|\nu(h)-\mu(h)|;~\|h\|_{\rm Lip}\leq1\}.
\end{equation}
\end{remark}

We will denote by $\distdomi$ the set of distances on ${\cal M}_1(\R^d)$ dominated by the $p$-Wasserstein distance for a given $p>0$. Namely, we set
\beq\label{set_distances}
\distdomi
:=
\{\textnormal{distances $d$ on ${\cal M}_1(\R^d)$} ; \,  \exists \;c>0 \text{ such that } \forall \nu,\mu\in {\cal M}_1(\R^d), d(\nu,\mu)\le c {\cal W}_p(\nu,\mu)\}
\eeq
In particular, a distance $d$ that belongs to $\distdomi$ induces a weaker topology than the  $p$-Wasserstein distance.
When necessary in some of the next results (or in the numerical experimentations), we will introduce specific distances which belong to $\distdomi$.

\begin{remark}
The family of $p$-Wasserstein distances obviously provides examples of distances in $\cd_{p}$. The Fortet-Mourier distance (see e.g \cite[Chapter 6]{villani2008optimal}), defined by 
\begin{equation*}
d_{\textsc{FM}}(\nu,\mu)
=
\sup\lcl  |\be[h(X)]-\be[h(Y)] | ; \, \text{ where } X\sim\mu, \, Y\sim\nu, \,  
\text{ and }\|h\|_{{\rm Lip}}\le1, \,  \|h\|_{\infty}\le1 \rcl,
\end{equation*}
is also easily seen to be an element of $\cd_{1}$ thanks to \eqref{eq:W1_Lip}. In this article we shall work with the distances $d_{\ccf,p}$ and $d_{s}$ introduced below in Section \ref{sec:rateofconvergence}, which are trivially proved to sit in $\cd_{1}$ (due to relation \eqref{eq:W1_Lip}).
\end{remark}

\subsection{Assumptions}
Before we proceed to a specific statement of our estimator, let us describe the assumptions under which we shall work. We start by  a  standard hypothesis on the parameter set $\tte$, which is supposed to be a compact set.

\noindent $\HZERO:$ The set $\tte$ is compactly embedded in $\R^q$ for a given $q\ge 1$.

Next we recall that our drift estimators rely on the invariant measure for the solution of equation \eqref{eq:sde}. The existence and uniqueness of this invariant measure is usually obtained under some coercivity assumptions on the drift $b$. In the current paper we will distinguish between two notions of coercivity, respectively named weak and strong and denoted by $\mathbf{(C_w)}$ and $\mathbf{(C_s)}$. The weak assumption can be summarized as follows. 

\noindent $\HUNW:$
 We have $b \in \mathcal{C}^{1,1} (\mathbb{R}^d \times \tte; \mathbb{R}^d)$ and 
there exist constants $\alpha, \beta, C, L>0$ and $r \in \mathbb{N}$ such that:

\noindent
\emph{(i)} For every $x,y\in\R^{d}$ and $\te\in\tte$ we have
\begin{equation}\label{eq:inward-weak}
\lla  b_{\te}(x)-b_{\te}(y), \, x-y\rra \le \beta - \alpha |x-y|^2 \quad \text{ and } \quad |b_{\te}(x)-b_{\te}(y)|\le L|x-y|
\end{equation}

\noindent
\emph{(ii)} For every $x\in\R^{d}$ and $\te\in\tte$ the following growth bound is satisfied:
\begin{equation}\label{eq:hyp-bnd-deriv-b}
|\partial_{\vartheta} b_{\te}(x)| \le C \lp  1+|x|^r \rp.
\end{equation}

The main part \eqref{eq:inward-weak} of assumption $\HUNW$ states that the coefficient $b$ is inward looking, except maybe on a compact set which is a neighborhood of 0. We now state the strong assumption $\HUNS$, which specifies that $b$ should be inward looking everywhere, and can be expressed as a particular case of $\HUNW$.

\noindent $\HUNS:$ Assumption $\HUNW$ holds with $\beta=0$. 

\smallskip

As mentioned above, Hypothesis $\HUNW$ combined with the invertibility of $\si$ (and therefore $\HUNS$) classically involves (see  $e.g.$ \cite{Ha}) the existence of a unique stationary solution for the solution of the following equation for any $\te\in\tte$:
\begin{equation}\label{eq:sdetheta}
dY_t^\te= b_{\te}(Y^\te_t) \, dt +  \si dB_t , \qquad t\in\ott.
\end{equation}
Notice that the system \eqref{eq:sdetheta} is identical to our original equation \eqref{eq:sde}. However, let us notice that  the fBm is unobserved. 
 Moreover, the uniqueness of the stationary measure must be understood in a weak sense. Namely there exists a unique distribution $\PE^{\te}$ on ${\cal C}([0,\infty),\R^d)$ such that if $(\by_t^{\te})$ denotes a process with distribution $\PE^{\te}$, then $(\by_t^{\te})_{t\ge0}$ is a stationary solution to \eqref{eq:sdetheta}, $i.e.$ shift-invariant (when one considers its canonical version). We denote by $\nu_{\te}$ the distribution of $\by^{\te}$ at any instant $t\ge0$, that is
 \begin{equation}\label{eq:defnute}
  \nu_{\te}={\cal L}(\by_0^{\te}).
 \end{equation}
 
 \begin{remark}\label{rem:invariant_dist}
Note that in this non-Markovian setting, $\nu_{\te}$ is not exactly the invariant distribution. More precisely, owing to \cite{Ha}, one can embed \eqref{eq:sdetheta} into an infinite-dimensional Markovian structure which allows the construction of an adapted ergodic theory. An invariant distribution $\bar{\nu}_{\te}$  is then defined on this enlarged structure.  Without going into the details, one can just say that in this theory, the probability $\nu_{\te}$ can be retrieved as a marginal of the ``true'' invariant distribution. In the sequel, we will thus talk about \textit{marginal invariant distribution} $\nu_{\te}$.
\end{remark}
{\begin{remark}\label{rem:invertsig} As mentioned before, the invertibility assumption on $\si$ combined with $\HUNW$ ensures uniqueness of the invariant distribution. However, even though this hypothesis is of first importance  under $\HUNW$ (in order to use irreducibility-type arguments), it could be entirely removed under $\HUNS$. Actually, in this case, 
the contraction assumption implies that two solutions of \eqref{eq:sdetheta} driven by the same fBm but starting from  different initial conditions come together at $\infty$, $a.s.$ and in $L^2$, which classically involves uniqueness (see $e.g.$  \cite[Lemma 3$(ii)$]{CP} for details). This remark also holds for the Euler scheme \eqref{eq:Euler_scheme} introduced in the next section but for sufficiently small step $\gamma$ (see again \cite[Lemma 3$(ii)$]{CP} for details).
\end{remark}
}
As said previously, we shall obtain our drift estimators through the analysis of the marginal invariant distribution $\nu_{\te}$ defined by \eqref{eq:defnute}. If we want this strategy to be successful, it is natural to assume that $\nu_{\te}$ characterizes $\te$. We thus label this hypothesis as follows.

\smallskip

\noindent $\HDEUX$: For all $\te\in\tte$, we have $\nu_\te=\nu_{\te_0}$  iff  $\te = \te_{0}$.

\noindent 
It is worth noticing that if $d$ denotes a distance on ${\cal M}_1(\R^d)$,  then one can recast $\HDEUX$ as: 
\begin{equation}\label{eq:I_w}
d(\nu_\te,\nu_{\te_0})=0
\quad\text{iff}\quad
\te = \te_{0}.
\end{equation}
We shall use this characterization in order to construct the estimator $\hte$ (see \eqref{eq:hat-theta-as-argmin} below). Also notice that $\HDEUX$ refers to a ``weak'' identifiability condition, which will be resorted to in order to derive the consistency of our estimator $\hte$. In contrast, the following ``strong'' identifiability assumption $\HTROIS$ defined for a given distance $d$ on ${\cal M}_1(\R^d)$ will be used to get rates of convergence.

\smallskip

\noindent $\HTROIS$ There exists a constant $C>0$ and a parameter $\varsigma\in(0,1]$ such that $\forall \te\in\tte$,
\begin{equation}\label{eq:I_s}
d(\nu_\te,\nu_{\te_0})\ge C|\te-\te_0|^\varsigma.
\end{equation}

\begin{remark} 
We will construct a class of equations, basically obtained as perturbations of Langevin type equations, for which our assumptions $\HDEUX$ and $\HTROIS$ are satisfied. See Section~\ref{sec:identif-inv-measure} below.
\end{remark}

\subsection{Statistical setting and construction of the estimator}
We wish to construct an estimator based on discrete observations. In this context, the simplest situation (which will mostly prevail in the paper) is to assume that the solution $(Y_t)_{t\ge0}$ of \eqref{eq:sdetheta} is discretely observed at some instants $\{t_k;\, 0\le k \le n\}$, with $t_{k+1}-t_{k}=\kappa$ for a  given time step  $\kappa > 0$. 
Under $\mathbf{(C_w)}$, it can be shown (see Proposition \ref{prop:ergodicSDE} below) that 
\begin{equation}\label{eq:cvgce-empirical-msr}
\frac{1}{n}\sum_{k=0}^{n-1} \delta_{Y_{t_k}}\overset{n\rightarrow+\infty}{\Longrightarrow} \nu_{\te_0}\quad a.s,
\end{equation}
where $\Longrightarrow$ stands for the weak convergence of probability measures in $\ER^d$. With this convergence in mind, the heart of our estimation method is then the following observation: under the identifiability assumption $\HDEUX$, the most natural way to estimate $\te_0$ is to consider 
\begin{equation}\label{eq:hat-theta-as-argmin}
\hat{\te}=\underset{\te\in\Theta}{\rm argmin}\; d\lp\frac{1}{n}\sum_{k=0}^{n-1} \delta_{Y_{t_k}},\nu_\te\rp,
\end{equation}
where $d$ is a given (arbitrary) distance on ${\cal M}_1(\ER^d)$. However, in spite of the fact that our formula~\eqref{eq:hat-theta-as-argmin} is simple enough, it is also easily understood that $\nu_\te$ is far from being explicitly known (except in some very particular cases such as  the Ornstein-Uhlenbeck process). In this paper, we propose to circumvent this difficulty by considering some estimators based on numerical approximations of $\nu_\te$. 

Specifically, the numerical approximations we will resort to are built 
through an Euler-type discretization of the stochastic process $Y^\te$ solution to \eqref{eq:sdetheta}.
Namely, let $(s_k)_{k\ge0}$ be an increasing sequence of numbers such that $s_0=0$,  and $\lim_{k\to\infty}s_k=+\infty$.
 The Euler-Maruyama scheme $Z^{\te}$ is then recursively defined by $Z^{\te}_{0}=z_{0}\in\ER^d$ and:
{
\begin{equation}\label{eq:Euler_scheme}
\text{For all } k\ge0,\quad  Z^{\te}_{s_{k+1}}
=Z^{\te}_{s_{k}}+
(s_{k+1}-s_k) b_{\te}(Z_{s_{k}}^{\te})  
+ \si \,  (\tilde{B}_{s_{k+1}}-\tilde{B}_{s_{k}}),
\end{equation}
}
where $\tilde{B}$ is a (simulated) $m$-dimensional fractional Brownian motion which is a priori different from the driving process $B$ in equation~\eqref{eq:sdetheta} (since $B$ is unobserved).
When $s_k=k\ga$ for a given $\gamma>0$, we say that the Euler scheme is a \textit{constant step} sequence and we denote it by $Z^{\te,\ga}$. 
When  $\gamma_k=s_k-s_{k-1}$ is  a non-increasing sequence such that $\ga_k\rightarrow0$ as $k\rightarrow+\infty$, the Euler scheme will be called \textit{decreasing step} Euler scheme. We will work with these two types of schemes in the sequel.

\begin{remark}
In practice it is natural to set  $Z^{\te}_{0}=Y_{0}$ where $Y_0$ is the first observation of the process $(Y_t)_{t\ge0}$.  Let us also remark that in the sequel, for notational sake, one usually denotes by $B$ the fBm related to the Euler scheme $Z^{\te}$. However, the reader has to keep in mind that the fact that the fBms in \eqref{eq:sdetheta} and in \eqref{eq:Euler_scheme} are different. This certainly prevents us from any pathwise comparison between the observed process and the simulated one.
 \end{remark}

\begin{remark} We refer to Section \ref{sec:simu} for background on the simulation of the increments of the fBm.
\end{remark}

Let us now give an explicit expression for the estimator we are considering in this article. We will focus on the constant step setting in \eqref{eq:Euler_scheme} for sake of simplicity. Observe that under $\mathbf{(C_w)}$, it can be shown (see Proposition \ref{prop:ergodicEuler} below) that
\begin{equation}\label{eq:convschemeeulerma}
\frac{1}{N}\sum_{k=0}^{N-1} \delta_{Z_{k\ga}^{\te,\ga}}\overset{n\rightarrow+\infty}{\Longrightarrow} \nu_{\te}^\gamma\quad a.s.
\end{equation}
where $\nu_\te^\gamma$ denotes the unique marginal invariant distribution of the Euler scheme $Z^{\te}$. By marginal, we mean again that $Z^{\te,\ga}$ can be endowed with a Feller infinite-dimensional Markov structure which admits a unique invariant distribution under $\mathbf{(C_w)}$ (see \cite{Var2} for details). The first marginal of this invariant distribution is  $\nu_\te^\gamma$.
Similarly to what we proposed in \eqref{eq:hat-theta-as-argmin}, such a result suggests to define our estimator for $\te$ as
\begin{equation}\label{eq:firstdefesti}
\hat{\te}_{N,n,\ga} = \underset{\te\in\Theta}{\rm argmin}\ d\lp\frac{1}{n}\sum_{k=0}^{n-1}\delta_{Y_{t_k}},~\frac{1}{N}\sum_{k=0}^{N-1}\delta_{Z_{k\ga}^{\te,\ga}}\rp,
\end{equation}
where $d$ is again a distance on ${\cal M}_1(\ER^d)$.
Note that in the decreasing step case analogous constructions may be carried out, and will be introduced later. Let us also remark that relation~\eqref{eq:firstdefesti} only involves one Euler scheme path, which is relevant for numerical implementations.

We are now in a position to state our main results. We divide the presentation in two parts. In the next section, we focus on strong consistence results related to the family $\{\hat{\te}_{N,n,\ga}; \, N\ge 1,n\ge 1,\ga>0\}$ defined by \eqref{eq:firstdefesti}, as well as its decreasing step counterpart.  Then Section \ref{sec:rateofconvergence} is dedicated to the rate of convergence of the estimator $\hat{\te}_{N,n,\ga}$. In particular, 
this second part will involve concentration results related to the SDE and to its Euler discretization. 

\subsection{Main consistency results}\label{subsection:main_consistency_results}
We begin with a first result involving the weak assumption~$\HUNW$, which requires to discretize the set $\tte$ in the following sense. According to our hypothesis $\HZERO$, the set $\tte$ is compact in $\R^q$. Therefore the Borel-Lebesgue property gives us the existence, for every $\varepsilon>0$, of $M_\varepsilon\in\N$ and $(\te^{(\varepsilon)}_i)_{1\leqslant i\leqslant M_\varepsilon}\in\tte^{M_\varepsilon}$ such that $\tte\subset\bigcup_{i=1}^{M_\varepsilon}B(\te^{(\varepsilon)}_i,\varepsilon)$. Thanks to this property, we define the following discretization for all $\varepsilon>0$ and $\te\in\tte$:
\begin{equation}\label{eq:discretize-parameter-space}
\te^{(\varepsilon)}:=\underset{\te'\in\{\te^{(\varepsilon)}_i\}}{\rm argmin}~|\te'-\te|.
\end{equation}
With this notation in hand, we can now state our first consistency theorem.

\begin{theorem}\label{thm:cvgce-estimator-dicrete-theta}
Assume $\HZERO$, $\HUNW$ and $\HDEUX$. Let $p$ be a strictly positive real number and consider a distance $d$ on ${\cal M}_1(\ER^d)$ which belongs to $\distdomi$ (recall that $\distdomi$ is defined by \eqref{set_distances}). 
We consider the family $\{\hat{\te}_{N,n,\ga}^{(\varepsilon)}; \, N\ge 1,n\ge 1,\ga>0, \ep>0\}$ defined by
\begin{equation}\label{def:hatthetanngamma}
\hat{\te}_{N,n,\ga}^{(\varepsilon)} = \underset{\te\in\tte^{(\varepsilon)}}{\rm argmin}\ d\lp\frac{1}{n}\sum_{k=0}^{n-1}\delta_{Y_{t_k}},\frac{1}{N}\sum_{k=0}^{N-1}\delta_{Z_{s_k}^{\te,\ga}}\rp, \quad N,n\in\mathbb{N},\;\gamma>0,\;\varepsilon>0
\end{equation}
where $\tte^{(\varepsilon)}:=\{\te^{(\varepsilon)}_i~;~1\leqslant i\leqslant M_\varepsilon\}$. Then $\hat{\te}_{N,n,\ga}^{(\varepsilon)}$ is a strong consistent estimator of $\te_0$. Specifically, we have
$$
\lim_{\varepsilon\to0}~\lim_{\ga\rightarrow0}~\lim_{N,n\rightarrow+\infty}
\hat{\te}_{N,n,\ga}^{(\varepsilon)}=\te_0\quad \text{a.s.}
$$
\end{theorem}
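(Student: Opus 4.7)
The natural plan is to peel off the three limits from the inside out, using an ergodicity-plus-triangle-inequality argument for the innermost limit, a stability result of the discretized invariant distribution for the $\gamma$-limit, and the identifiability assumption $\HDEUX$ together with continuity of $\vartheta\mapsto\nu_\vartheta$ for the final $\varepsilon$-limit. Throughout, we exploit that $d\in\distdomi$, so that it suffices to control everything in ${\cal W}_p$.

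First I would fix $\varepsilon>0$ so that $\tte^{(\varepsilon)}=\{\vartheta_1^{(\varepsilon)},\ldots,\vartheta_{M_\varepsilon}^{(\varepsilon)}\}$ is finite by $\HZERO$. For each $\vartheta\in\tte^{(\varepsilon)}$, Proposition~\ref{prop:ergodicSDE} gives
$\frac1n\sum_{k=0}^{n-1}\delta_{Y_{t_k}}\Rightarrow \nu_{\te_0}$ a.s., and Proposition~\ref{prop:ergodicEuler} gives $\frac1N\sum_{k=0}^{N-1}\delta_{Z_{s_k}^{\vartheta,\gamma}}\Rightarrow \nu_\vartheta^{\gamma}$ a.s. Under $\HUNW$, standard inward-drift moment estimates yield uniform-in-$n$ (resp.\ uniform-in-$N$) control of the empirical $p$-th moments, so these weak convergences upgrade to convergences in ${\cal W}_p$. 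Combining with $d\le c\,{\cal W}_p$ and the triangle inequality, we get, simultaneously for all $\vartheta$ in the finite set $\tte^{(\varepsilon)}$ and on a full-measure event,
\begin{equation*}
\lim_{N,n\to+\infty}\; d\!\lp\tfrac{1}{n}\sum_{k=0}^{n-1}\delta_{Y_{t_k}},\tfrac{1}{N}\sum_{k=0}^{N-1}\delta_{Z_{s_k}^{\vartheta,\gamma}}\rp
= d(\nu_{\te_0},\nu_\vartheta^{\gamma}).
\end{equation*}
Since $\tte^{(\varepsilon)}$ is finite, the argmin of the left-hand side over $\tte^{(\varepsilon)}$ is eventually contained in the argmin of $\vartheta\mapsto d(\nu_{\te_0},\nu_\vartheta^{\gamma})$ over $\tte^{(\varepsilon)}$; call any such minimizer $\tilde{\vartheta}_\gamma^{(\varepsilon)}$.

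Next I would pass to the limit $\gamma\to0$. The key input is that the marginal invariant measure of the Euler scheme converges to that of the SDE, i.e.\ $\nu_\vartheta^{\gamma}\to\nu_\vartheta$ in ${\cal W}_p$ as $\gamma\to0$, again uniformly on the finite set $\tte^{(\varepsilon)}$. This is obtained by coupling a stationary Euler scheme with a stationary solution of \eqref{eq:sdetheta} driven by the same fBm and using the (weak) contractivity from $\HUNW$ plus a Lipschitz consistency estimate; the argument is analogous to \cite{CP} and uses $\HUNW$, the invertibility of $\sigma$ and a small-enough step hypothesis (as alluded to in Remark~\ref{rem:invertsig}). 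Using once more $d\le c\,{\cal W}_p$ and the triangle inequality, we conclude
\begin{equation*}
\lim_{\gamma\to0}\;\tilde{\vartheta}_\gamma^{(\varepsilon)}\;\in\;\underset{\vartheta\in\tte^{(\varepsilon)}}{\rm argmin}\; d(\nu_{\te_0},\nu_\vartheta)=:S^{(\varepsilon)}.
\end{equation*}

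Finally, I would let $\varepsilon\to0$. By $\HDEUX$ the unique minimizer of $\vartheta\mapsto d(\nu_{\te_0},\nu_\vartheta)$ over the whole set $\tte$ is $\te_0$ (recall the characterization \eqref{eq:I_w}). The discretization \eqref{eq:discretize-parameter-space} gives $\vartheta_0^{(\varepsilon)}\to\te_0$ as $\varepsilon\to0$, and by continuity of $\vartheta\mapsto\nu_\vartheta$ in ${\cal W}_p$ (which follows from the same coupling trick applied to two solutions with different parameters but the same noise, combined with $\HUNW$ and the smoothness of $b$ in $\vartheta$ from \eqref{eq:hyp-bnd-deriv-b}), we get $d(\nu_{\te_0},\nu_{\vartheta_0^{(\varepsilon)}})\to 0$. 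Hence $\min_{\vartheta\in\tte^{(\varepsilon)}}d(\nu_{\te_0},\nu_\vartheta)\to 0$. Any limit point $\vartheta_\star$ of $S^{(\varepsilon)}$ thus satisfies $d(\nu_{\te_0},\nu_{\vartheta_\star})=0$, so $\vartheta_\star=\te_0$ by $\HDEUX$. This yields the announced iterated limit.

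The two steps I expect to be the most delicate are (i) the uniform-in-time $p$-th moment bounds needed to upgrade the ergodic weak convergence to a ${\cal W}_p$ convergence for the empirical measures (for both the SDE and its Euler scheme, under the \emph{weak} coercivity of $\HUNW$ and only outside a compact set), and (ii) the ${\cal W}_p$-continuity of $\gamma\mapsto\nu_\vartheta^{\gamma}$ at $\gamma=0$ and of $\vartheta\mapsto\nu_\vartheta$. Both rely on synchronous-coupling arguments that work cleanly because the noise is additive, so that the difference of two solutions satisfies a deterministic ODE driven by the drift only; this is exactly the point emphasized in the introduction and is the reason one can avoid stochastic calculus here.
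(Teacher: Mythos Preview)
Your proposal is correct and follows essentially the same route as the paper: exploit the finiteness of $\tte^{(\varepsilon)}$ to turn pointwise ergodic convergences (Propositions~\ref{prop:ergodicSDE} and~\ref{prop:ergodicEuler}) into uniform ones, deduce that limit points of the estimator lie in $\mathrm{argmin}_{\te\in\tte^{(\varepsilon)}}d(\nu_{\te_0},\nu_\te)$, and then use continuity of $\te\mapsto\nu_\te$ together with $\HDEUX$ to pass $\varepsilon\to0$. The only organisational difference is that the paper packages the ``argmin stability under uniform convergence'' step into a general lemma (Proposition~\ref{prop:consistency_criteria}) and passes directly from $L_{N,n,\gamma}$ to $L(\te)=d(\nu_{\te_0},\nu_\te)$, whereas you insert the intermediate stage $d(\nu_{\te_0},\nu_\te^\gamma)$ and argue the argmin stability by hand on the finite set; both are equivalent here.
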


Let us remark that the discretization of $\tte$ given by \eqref{eq:discretize-parameter-space} is needed to get strong consistency, due to the fact that under $\HUNW$ we loose uniformity with respect to $\te$ in some of our convergence results. For instance, $\HUNW$ only warranties the simple convergence of $d(\nu^\ga_\te,\nu_\te)$ to $0$ as $\ga\rightarrow0$ (see Proposition \ref{prop:ergodicEuler}). The proof of Theorem \ref{thm:cvgce-estimator-dicrete-theta}
is achieved in Section \ref{subsection:proofth}.\smallskip

We now turn to our main estimator defined in \eqref{eq:firstdefesti}. The proof of the theorem below is detailed in Section \ref{subsection:proofth}.

\begin{theorem}\label{thm:cvgce-esti1} 
Assume $\HZERO$, $\HUNS$ and $\HDEUX$. As in Theorem \ref{thm:cvgce-estimator-dicrete-theta}, let $p$ be a strictly positive real number and consider a distance $d$ on ${\cal M}_1(\ER^d)$ which belongs to $\distdomi$. 
Then the family $\{\hat{\te}_{N,n,\ga}; \, N\ge 1,n\ge 1,\ga>0\}$  defined by \eqref{eq:firstdefesti} is a strong consistent estimator of $\te_0$ in the following sense:
$$
\lim_{\gamma\rightarrow0}\lim_{N,n\rightarrow+\infty}\hat{\te}_{N,n,\ga}=\te_0\quad \text{a.s.}
$$
\end{theorem}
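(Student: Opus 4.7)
My plan is to follow the standard M-estimator strategy: introduce the random contrast
\[
F_{N,n,\ga}(\te) := d\bigl(\mu_n^{Y},\mu_N^{Z,\te,\ga}\bigr),
\qquad
\mu_n^{Y} := \tfrac{1}{n}\sum_{k=0}^{n-1}\delta_{Y_{t_k}},
\quad
\mu_N^{Z,\te,\ga} := \tfrac{1}{N}\sum_{k=0}^{N-1}\delta_{Z_{k\ga}^{\te,\ga}},
\]
and the deterministic limiting contrast $F(\te):= d(\nu_{\te_0},\nu_\te)$, and prove that $\sup_{\te\in\tte}|F_{N,n,\ga}(\te)-F(\te)|\to 0$ a.s.\ in the iterated limit $\ga\to 0$ after $N,n\to\infty$. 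By $\HDEUX$ (cf.\ \eqref{eq:I_w}) $\te_0$ is the unique zero of $F$, and $F$ is continuous on the compact set $\tte$ thanks to $\HZERO$; such a uniform convergence will therefore force $\hat\te_{N,n,\ga}\to\te_0$ by the usual argmin stability argument.

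The triangle inequality for $d$ gives
\[
|F_{N,n,\ga}(\te)-F(\te)|
\le d\bigl(\mu_n^{Y},\nu_{\te_0}\bigr)
+ d\bigl(\mu_N^{Z,\te,\ga},\nu_\te^\ga\bigr)
+ d\bigl(\nu_\te^\ga,\nu_\te\bigr).
\]
The first term is $\te$-independent and tends to $0$ a.s.\ as $n\to\infty$ by Proposition~\ref{prop:ergodicSDE}, combined with $d\in\distdomi$ and standard $p$-moment bounds for $\by_0^{\te_0}$. The essential new point compared to Theorem~\ref{thm:cvgce-estimator-dicrete-theta} is that the two $\te$-dependent terms must now be controlled \emph{uniformly} in $\te\in\tte$, which is exactly what allows us to dispense with any $\ep$-discretization of $\tte$.

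This uniformity comes from the strong contractivity $\HUNS$ (i.e.\ $\beta=0$ in \eqref{eq:inward-weak}) via synchronous coupling. Running \eqref{eq:sdetheta} (resp.\ \eqref{eq:Euler_scheme}, for $\ga$ small enough so contractivity survives at the discrete level, cf.\ Remark~\ref{rem:invertsig}) with two parameters $\te,\te'$ driven by the \emph{same} fBm and applying Gronwall's inequality, together with the growth bound \eqref{eq:hyp-bnd-deriv-b} and the uniform-in-$\te$ $p$-moment estimates granted by $\HUNS$, yields
\[
\sup_{t\ge 0}\be\bigl[|Y_t^\te - Y_t^{\te'}|^p\bigr]^{1/p}\le C\,|\te-\te'|,
\]
independently of $t$ and $\te,\te'\in\tte$, and likewise for $Z^{\te,\ga}$. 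Taking marginal laws, this gives the ${\cal W}_p$-Lipschitz continuity of $\te\mapsto\nu_\te$ and $\te\mapsto\nu_\te^\ga$; via $d\le c\,{\cal W}_p$ it also gives the equicontinuity (in $\te$) of the two families $\{\te\mapsto d(\mu_N^{Z,\te,\ga},\nu_\te^\ga)\}_{N,\ga}$ and $\{\te\mapsto d(\nu_\te^\ga,\nu_\te)\}_\ga$. Combined with the pointwise convergences given by Proposition~\ref{prop:ergodicEuler} and the classical $\nu_\te^\ga\to\nu_\te$ as $\ga\to 0$, equicontinuity on the compact set $\tte$ upgrades these to the required uniform convergences.

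Once uniform convergence is in hand, the conclusion is routine. By definition of $\hat\te_{N,n,\ga}$ we have $F_{N,n,\ga}(\hat\te_{N,n,\ga})\le F_{N,n,\ga}(\te_0)$; since $F(\te_0)=0$, uniform convergence yields $F(\hat\te_{N,n,\ga})\to 0$ in the iterated limit. Continuity of $F$ on the compact set $\tte$, together with $\HDEUX$, forces any accumulation point of $\hat\te_{N,n,\ga}$ to equal $\te_0$, whence the a.s.\ convergence. The main obstacle in this plan is precisely the equicontinuity-in-$\te$ step: it is here that $\HUNS$ rather than $\HUNW$ is needed, since only the pure contraction $\beta=0$ makes the synchronous coupling deliver a bound on $\be[|Y_t^\te-Y_t^{\te'}|^p]$ that is uniform in $t$ and hence passes to the invariant measures.
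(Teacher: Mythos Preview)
Your overall strategy matches the paper's exactly: set $L_{N,n,\ga}(\te)=d(\mu_n^Y,\mu_N^{Z,\te,\ga})$, prove uniform-in-$\te$ convergence to $L(\te)=d(\nu_{\te_0},\nu_\te)$ via the same three-term triangle-inequality decomposition, then feed this into the general argmin-stability criterion (the paper's Proposition~\ref{prop:consistency_criteria}). The continuity of $L$ is Proposition~\ref{prop:continuity-Q}, and the uniqueness of the minimiser is $\HDEUX$.

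There is, however, one genuine imprecision in the equicontinuity step. You display
\[
\sup_{t\ge 0}\be\bigl[|Y_t^\te - Y_t^{\te'}|^p\bigr]^{1/p}\le C\,|\te-\te'|
\]
and then assert that this ``also gives the equicontinuity (in $\te$) of the family $\{\te\mapsto d(\mu_N^{Z,\te,\ga},\nu_\te^\ga)\}_{N}$''. It does not. The map $\te\mapsto \mu_N^{Z,\te,\ga}$ is a \emph{random} measure, and the equicontinuity you need is an \emph{almost sure} statement. An $L^p(\Omega)$ bound on $|Z_{k\ga}^{\te,\ga}-Z_{k\ga}^{\te',\ga}|$ only controls $\cw_p(\cl(Z_{k\ga}^{\te,\ga}),\cl(Z_{k\ga}^{\te',\ga}))$, not the pathwise distance $d(\mu_N^{Z,\te,\ga},\mu_N^{Z,\te',\ga})(\omega)$. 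The paper closes this gap in Proposition~\ref{prop:contracttt}$(ii)$: the same synchronous coupling under $\HUNS$ yields a \emph{pathwise} Gronwall bound of the form
\[
|Z_{k\ga}^{\te_1,\ga}-Z_{k\ga}^{\te_2,\ga}|^p \le C\,|\te_1-\te_2|^p \sum_{j=0}^{k-1}(1-c\ga)^{k-1-j}\bigl(1+|Z_{j\ga}^{\te_2,\ga}|^{pr}\bigr),
\]
which, averaged over $k$ and combined with the almost-sure uniform moment control on the empirical occupation measures (Proposition~\ref{prop:control_moment_Z}$(iii)$), delivers an a.s.\ Lipschitz bound with a \emph{random} constant $C_p(\ga)(\omega)$. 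That is the object you must invoke. Once you replace the $L^p$ estimate by its pathwise counterpart, your argument is complete and coincides with the paper's.
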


We close this section with a result concerning the approximation of invariant measures by an Euler scheme with decreasing time step. Namely we consider an approximation scheme denoted also by $Z^{\te}$, which is defined similarly to $Z^{\te,\ga}$ in \eqref{eq:Euler_scheme} except for the fact that the sequence $(s_k)_{k\ge0}$ satisfies:
\begin{equation}\label{eq:def-steps-sk}
s_{k+1}-s_k=\gamma_{k+1}, \quad k\ge0,
\end{equation}
where $(\gamma_k)_{k\ge1}$ is a non-increasing sequence of positive numbers such that 
\begin{equation}\label{eq:condgamma}
\sum_{k\ge1}\gamma_k=+\infty\quad \textnormal{and}\quad \lim_{k\rightarrow+\infty}\gamma_k=0.
\end{equation}
The convergence theorem we obtain in the decreasing step case is the following. As for Theorem~\ref{thm:cvgce-esti1} , it is achieved under the strong coercivity assumption $\HUNS$. Its proof is developped in Section~\ref{subsection:proofth}.

\begin{theorem}\label{thm:cvgce-esti2}
Assume $\HZERO$, $\HUNS$ and $\HDEUX$. Let $p\ge 2$ and consider a distance $d$ on ${\cal M}_1(\ER^d)$ which belongs to $\distdomi$. Let $(\gamma_k)_{k\ge1}$ be a non-increasing sequence of positive numbers satisfying condition \eqref{eq:condgamma} and the technical condition \eqref{eq:hyp_gamma}.
 Denote by $Z^{\te}$ the related Euler scheme given by \eqref{eq:Euler_scheme}. We consider an estimator $(\hat{\te}_{N,n})_{N,n}$  defined as
\begin{equation}\label{def:hatthetanngamman}
\hat{\te}_{N,n} = \underset{\te\in\Theta}{\rm argmin}\ d\lp\frac{1}{n}\sum_{k=0}^{n-1}\delta_{Y_{t_k}},\frac{1}{s_N}\sum_{k=0}^{N-1}\gamma_{k+1}\delta_{Z_{s_k}^{\te}}\rp, \quad N\in\mathbb{N}.
\end{equation}
Then,  $(\hat{\te}_{N,n})_{N,n}$  is a strong consistent estimator of $\te_0$, namely:
$$\lim_{N,n\rightarrow+\infty}\hat{\te}_{N,n}=\te_0\quad \text{a.s.}
$$
\end{theorem}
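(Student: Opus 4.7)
The strategy parallels the one for Theorem~\ref{thm:cvgce-esti1}, replacing the constant-step Euler scheme by its decreasing-step counterpart. Set $\mu_n := \frac{1}{n}\sum_{k=0}^{n-1}\delta_{Y_{t_k}}$ and $\mu_N^\te := \frac{1}{s_N}\sum_{k=0}^{N-1}\gamma_{k+1}\delta_{Z_{s_k}^{\te}}$ for the two empirical measures appearing in \eqref{def:hatthetanngamman}. The plan is to show that both $\mu_n \to \nu_{\te_0}$ and $\mu_N^\te \to \nu_\te$ (the latter \emph{uniformly} in $\te\in\tte$) in $\cw_p$ almost surely, and to combine these two convergences with the identifiability hypothesis $\HDEUX$ through a triangle-inequality argument.

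First, Proposition~\ref{prop:ergodicSDE} yields $\mu_n \to \nu_{\te_0}$ in $\cw_p$ almost surely (using $p\ge 2$ together with the uniform moment bounds forced by $\HUNS$), hence also in the dominated distance $d \in \distdomi$. The harder step is to establish that
$$
\lim_{N\to\infty}\sup_{\te\in\tte} d\bigl(\mu_N^\te,\nu_\te\bigr) = 0 \qquad \text{a.s.}
$$
Under $\HUNS$, two decreasing-step Euler schemes driven by the same simulated fBm but started at different initial conditions contract in $L^p$ at a rate which does not depend on $\te$ (see Remark~\ref{rem:invertsig} and, for the Euler analogue, \cite[Lemma~3(ii)]{CP}). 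Combined with the technical step condition~\eqref{eq:hyp_gamma} and with uniform-in-$\te$ $L^p$-moment estimates on $Z^\te$ granted by $\HZERO$ and $\HUNW$, this contraction drives the weighted ergodic average $\mu_N^\te$ to $\nu_\te$ in $\cw_p$ uniformly on $\tte$, along the same lines as the fractional ergodic-averaging arguments of~\cite{Var2}. The inclusion $d\in\distdomi$ then transfers the convergence to the chosen distance $d$.

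With both convergences at hand, the conclusion follows the same pattern as for Theorem~\ref{thm:cvgce-esti1}. Optimality of $\hat\te_{N,n}$ gives $d(\mu_n,\mu_N^{\hat\te_{N,n}}) \le d(\mu_n,\mu_N^{\te_0})$, whence by the triangle inequality
$$
d\bigl(\nu_{\te_0},\nu_{\hat\te_{N,n}}\bigr) \le d\bigl(\nu_{\te_0},\mu_n\bigr) + d\bigl(\mu_n,\mu_N^{\te_0}\bigr) + d\bigl(\mu_N^{\te_0},\nu_{\te_0}\bigr) + d\bigl(\mu_N^{\hat\te_{N,n}},\nu_{\hat\te_{N,n}}\bigr),
$$
and each of the four right-hand terms tends to $0$ almost surely by the previous two steps (the last one thanks to the \emph{uniform} convergence in Step~2, which is essential since $\hat\te_{N,n}$ is random). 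The map $\te \mapsto \nu_\te$ is continuous on the compact set $\tte$ by continuous dependence of solutions of \eqref{eq:sdetheta} on $\te$, hence so is $\te \mapsto d(\nu_{\te_0},\nu_\te)$; the weak identifiability $\HDEUX$ then asserts that $\te_0$ is its unique zero, and a standard compactness argument on $\tte$ forces $\hat\te_{N,n}\to\te_0$ almost surely.

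The main obstacle is clearly Step~2. Unlike the constant-step case of Theorem~\ref{thm:cvgce-esti1}, where one may first let $N\to\infty$ at fixed $\gamma$ (producing $\nu_\te^\gamma$) and then send $\gamma\to 0$, here the thinning of the mesh and the long-time averaging are intertwined in a single $N\to\infty$ limit. The role of the technical condition~\eqref{eq:hyp_gamma} is precisely to quantify this coupling: together with the strong contraction of the scheme, it gives quantitative control of the $\cw_p$-error $d(\mu_N^\te,\nu_\te)$ with constants that depend on $\te$ only through quantities uniformly bounded on $\tte$, which upgrades pointwise to uniform convergence over $\tte$.
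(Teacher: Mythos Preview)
Your overall architecture matches the paper's: reduce to showing (a) $d(\mu_n,\nu_{\te_0})\to 0$, (b) $\sup_{\te\in\tte} d(\mu_N^\te,\nu_\te)\to 0$, then combine with continuity of $\te\mapsto d(\nu_{\te_0},\nu_\te)$ and $\HDEUX$. The paper packages the last step as a general argmin lemma (Proposition~\ref{prop:consistency_criteria}) rather than the explicit triangle chain you wrote; the two are equivalent. (Your displayed inequality is slightly off as a pure triangle chain: starting from $d(\nu_{\te_0},\nu_{\hat\te})\le d(\nu_{\te_0},\mu_n)+d(\mu_n,\mu_N^{\hat\te})+d(\mu_N^{\hat\te},\nu_{\hat\te})$ and using optimality gives three terms, and only after further splitting $d(\mu_n,\mu_N^{\te_0})\le d(\mu_n,\nu_{\te_0})+d(\nu_{\te_0},\mu_N^{\te_0})$ do you get four, with a factor~$2$ in front of $d(\nu_{\te_0},\mu_n)$. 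This is cosmetic.)

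Where your sketch diverges from the paper is in Step~2, which is indeed where all the work sits, and your description of it is too vague to count as a proof. You invoke contraction of two decreasing-step Euler schemes with the same noise and different initial conditions, together with~\cite{Var2}. That contraction shows at best that any limit of $\mu_N^\te$ does not depend on the starting point; for a \emph{decreasing}-step scheme there is no fixed invariant distribution to appeal to, so this alone does not identify the limit as $\nu_\te$, let alone uniformly in $\te$. The paper (Proposition~\ref{prop:unifdecreas}) instead routes through the \emph{continuous} solution $Y^\te$ of~\eqref{eq:sdetheta} driven by the \emph{same simulated} fBm as $Z^\te$, via the splitting
\[
d(\mu_N^\te,\nu_\te)\ \le\ A_{21,N}(\te)+A_{22,N}(\te)+A_{23,N}(\te),
\]
where $A_{21}$ compares $Z^\te_{s_k}$ with $Y^\te_{s_k}$ (handled pathwise by the Euler--SDE contraction bound of Proposition~\ref{prop:rapdisccont}\,(ii)), $A_{22}$ compares $Y^\te_{\underline{t}}$ with $Y^\te_t$, and $A_{23}$ is the continuous occupation measure of $Y^\te$ against $\nu_\te$ (Proposition~\ref{prop:ergodicSDE} plus the Lipschitz-in-$\te$ control of Proposition~\ref{prop:contracttt} for uniformity). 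The technical condition~\eqref{eq:hyp_gamma} is used in a more specific way than you indicate: it makes the series $\sum_k \gamma_{k+1}^{p'H+1}/s_k$ summable, so that Kronecker's lemma kills the noise terms $\frac{1}{s_N}\int_0^{s_N}|B_t-B_{\underline t}|^{p'}dt$ and $\frac{1}{s_N}\sum_\ell\int_0^{\gamma_{\ell+1}}|B_{s_\ell+t}-B_{s_\ell}|^{p'}dt$ appearing in $A_{22}$ and $A_{21}$ respectively. Without this SDE-bridge decomposition your Step~2 remains a claim rather than an argument.
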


\begin{remark} The technical condition  \eqref{eq:hyp_gamma}  ($\sum_{k\ge 1}\gamma_k^{p'H+1} s_k^{-1}<+\infty$ for a given $p'\ge p$) is true in a very large setting.
For instance, it can be checked that this is satisfied for any polynomial step sequence : $\gamma_k= \gamma  k^{-\rho}$ with $\rho\in(0,1]$ and $\gamma\in \ER_+^*$, but also 
 for less decreasing sequences such as $\gamma_k=\gamma (\log k)^{-1}$. However, this is not true in full generality (the condition does not hold when $\gamma_k=(\log(\log k))^{-1}$ for instance).
\end{remark}

\subsection{Rate of convergence}\label{sec:rateofconvergence}

Under our strong identifiability condition $\HTROIS$, we will be able to get a rate of convergence for some of our estimators. In order to carry out this task, we shall assume that condition $\HTROIS$ is verified for some specific distances on probability measures called respectively $d_{\ccf,p}$ and $d_s$. These distances are defined in the following way:
\begin{itemize}
\item[$(i)$] Let $X$ and $Y$ be $\R^d$-valued random variables and $p>(\frac{d}{2}\vee1)$. We consider the integrable kernel $g_p(\xi):=c_p(1+|\xi|^2)^{-p}$ where $c_p:=\left(\int_{\R^d}(1+|\xi|^2)^{-p}d\xi\right)^{-1}$. Then the $d_{\ccf,p}$ distance between $\mathcal{L}(X)$ and $\mathcal{L}(Y)$ is defined by:
\begin{equation}\label{eq:def_CFp}
d_{\ccf,p}(\cl(X),\cl(Y)):=\left(\int_{\R^d}\left(\be[e^{i\langle \xi,X\rangle}]-\be[e^{i\langle \xi,Y\rangle}]\right)^2g_p(\xi)d\xi\right)^{1/2}.
\end{equation}
\item[$(ii)$] Let $\{f_i~;~i\geq1\}$ be a family of $\mathcal{C}_b^1$, supposed to be dense in the space $\mathcal{C}_b^0$ of continuous and bounded functions and decreasing to $0$ at $\infty$. Consider two probability measures $\nu$ and $\mu$ in $\cm_1(\R^d)$. Then the distance $d_s$ between $\nu$ and $\mu$ is defined as:
\begin{equation}\label{eq:def_ds}
d_s(\nu,\mu):=\sum_{i=0}^{+\infty}2^{-i}(|\nu(f_i)-\mu(f_i)|\wedge1).
\end{equation}
\end{itemize}

\begin{remark} It is readily checked that both $d_{\ccf,p}$ and $d_s$ metrize the convergence in law (their induced topology are in fact exactly the one induced by the convergence in law). The distance $d_{\ccf,p}$ is technically convenient for our purpose and close in spirit to the smooth Wasserstein distance invoked in the Stein method literature (see e.g \cite{AMPS}). The distance $d_s$ is called weak-$\star$ distance in \cite{villani2008optimal} and also used in \cite{Xiong} for filtering problems. Also notice that both $d_{\ccf,p}$ and $d_s$ are elements of $\mathcal{D}_1$ where $\mathcal{D}_1$ is defined by \eqref{set_distances}.
\end{remark}

With the distances $d_{\ccf,p}$ and $d_s$ in hand, our main result about rates of convergence is the following:
\begin{theorem}\label{thm:rate_of_convergence}
Assume $\HZERO$, $\HUNS$ and $\HTROIS$ hold true, where \eqref{eq:I_s} in hypothesis $\HTROIS$ is considered for $d=d_s$ or $d=d_{\ccf,p}$ with $p>(q+d)/2$ and $\varsigma=2/q$ for a given $q\geq2$. Let $\hat{\te}_{N,n,\ga}$ be the estimator given by \eqref{eq:firstdefesti}. Then, we get the following rate of convergence: there exists $C_q>0$ such that
\begin{equation}
\ES\left[|\hat{\te}_{N,n,\ga}-\te_0|^2\right]
\leq C_q\left(n^{-\frac{q}{2}(2-(2H\vee1))}+\ga^{qH}+T^{-\tilde{\eta}}\right)
\end{equation}
with $\tilde{\eta}:=\frac{q^2}{2(q+d)}(2-(2H\vee1))$ and $T:=N\ga$.
\end{theorem}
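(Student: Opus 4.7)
The starting point is to use the strong identifiability assumption $\HTROIS$ (with $\varsigma=2/q$) to transfer the estimation error into a distance between invariant measures:
$$
|\hat{\te}_{N,n,\ga}-\te_0|^{2} \;\le\; C^{-q}\,d\bigl(\nu_{\hat{\te}_{N,n,\ga}},\nu_{\te_0}\bigr)^{q}.
$$
Write $\hat\te=\hat{\te}_{N,n,\ga}$, $\hat\mu_n=\frac1n\sum_{k<n}\delta_{Y_{t_k}}$ and $\hat\nu_{N,\ga}^{\te}=\frac1N\sum_{k<N}\delta_{Z_{k\ga}^{\te,\ga}}$. The defining property of the argmin yields $d(\hat\nu_{N,\ga}^{\hat\te},\hat\mu_n)\le d(\hat\nu_{N,\ga}^{\te_0},\hat\mu_n)$, hence by two applications of the triangle inequality,
\begin{equation*}
d(\nu_{\hat\te},\nu_{\te_0})\;\le\;
\underbrace{d(\nu_{\hat\te},\nu_{\hat\te}^{\ga})}_{D_3(\hat\te)}
+\underbrace{d(\nu_{\hat\te}^{\ga},\hat\nu_{N,\ga}^{\hat\te})}_{D_2(\hat\te)}
+\underbrace{d(\hat\nu_{N,\ga}^{\te_0},\nu_{\te_0}^{\ga})}_{D_2(\te_0)}
+\underbrace{d(\nu_{\te_0}^{\ga},\nu_{\te_0})}_{D_3(\te_0)}
+ 2\,\underbrace{d(\hat\mu_n,\nu_{\te_0})}_{D_1}.
\end{equation*}
The plan is then to raise this inequality to the power $q$, take expectations, split via $(\sum a_i)^q\le C_{q}\sum a_i^q$, and bound each of the three families of terms $D_1,D_2,D_3$ by a quantity matching one of the three summands in the announced rate.

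\textbf{Discretization error $D_3$ and observation error $D_1$.} Since $d\in\cd_{1}$ (so $d\le C\,\cw_{1}\le C\,\cw_p$), the bound on $D_3(\te)$ follows from the synchronous coupling between the stationary solution $\oy^{\te}$ of \eqref{eq:sdetheta} and the stationary Euler scheme $\bar Z^{\te,\ga}$ driven by \emph{the same} fBm: the contractivity furnished by $\HUNS$ propagates the one-step strong Euler error (of order $\ga^{H}$ for an additive fBm-driven SDE) into a uniform-in-$\te$ bound $\cw_p(\nu_{\te},\nu_{\te}^{\ga})\le C\ga^{H}$, whence $\E[D_3(\te)^q]\le C\ga^{qH}$. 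The term $D_1$ is controlled by a second-moment computation on the empirical measure of the stationary observations: using the characteristic-function (for $d_{\ccf,p}$) or test-function (for $d_s$) representation of $d$, the key ingredient is the polynomial covariance decay $|{\rm cov}(Y_{t_i},Y_{t_j})|\lesssim (1+|i-j|)^{2H-2}$ when $H>1/2$ and exponential mixing when $H\le 1/2$, leading to
$$
\E\bigl[D_1^{q}\bigr]\;\le\; C\,n^{-\frac{q}{2}(2-(2H\vee 1))}.
$$

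\textbf{Euler-empirical error $D_2$ and the exponent $\tilde\eta$.} This is the delicate step because we need the bound uniformly in $\te\in\tte$ (since $\hat\te$ is random). For either $d=d_s$ or $d=d_{\ccf,p}$, one reduces to controlling, uniformly in $\te$, the Fourier-side error $\E|\hat\phi_{N,\ga}^{\te}(\xi)-\phi_{\nu_\te^\ga}(\xi)|^{2}$ of the empirical characteristic function of $\bar Z^{\te,\ga}$. The compactness of $\Theta$ ($\HZERO$) and the Lipschitz/growth bounds in $\HUNS$--\eqref{eq:hyp-bnd-deriv-b} make the constants in the covariance-decay estimate for $\bar Z^{\te,\ga}$ uniform in $\te$, yielding a pointwise bound of the shape $C(1\wedge|\xi|^{2}T^{-(2-(2H\vee 1))})$. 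Plugging this into the definition of $d_{\ccf,p}^{2}$ (or summing against the family $\{f_i\}$ in $d_s$), splitting the Fourier integral at a cutoff radius $R$, raising to the $q/2$-th power and optimizing $R$ in $T$ against the polynomial tail $\int_{|\xi|>R}g_p\sim R^{d-2p}$ gives, under the assumption $p>(q+d)/2$ (which ensures the integrability of $|\xi|^{q}g_p$), the announced exponent
$$
\tilde{\eta}\;=\;\frac{q^{2}}{2(q+d)}\bigl(2-(2H\vee 1)\bigr).
$$
Combining this with the bounds on $D_1$ and $D_3$ and the identifiability inequality concludes the proof.

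\textbf{Main obstacle.} The hard part is precisely the Fourier/cutoff argument for $D_2$: one must obtain a concentration inequality for the empirical characteristic function of the Euler scheme that is simultaneously (i)~quantitative in $T=N\ga$ with the sharp exponent $2-(2H\vee 1)$, and (ii)~uniform over the compact set $\Theta$, so that the step $D_2(\hat\te)\le \sup_{\te\in\tte} D_2(\te)$ is lossless. The polynomial weight $g_p$ then forces the dimension-dependent loss $q/(q+d)$ visible in $\tilde\eta$. The discretization bound on $D_3$ is the second main technical input, and it is crucially here that the gradient-free, purely coupling-based viewpoint promoted in this paper allows one to reach any $H\in(0,1)$.
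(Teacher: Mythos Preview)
Your high-level structure (identifiability, then a triangle-inequality decomposition, then term-by-term $L^{q}$ bounds) is fine, and your decomposition through the Euler invariant measure $\nu_{\te}^{\ga}$ is a legitimate variant of the paper's route through the SDE empirical measure $\frac{1}{N}\sum_{k}\delta_{Y^{\te}_{k\ga}}$. However, two of your three term-by-term arguments contain genuine gaps.

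For $D_1$, a ``second-moment computation'' based on covariance decay of $Y$ cannot deliver $\E[D_1^{q}]\le C n^{-\frac{q}{2}(2-(2H\vee1))}$ for arbitrary $q\ge 2$: covariance (or mixing) only controls the variance, and for $q>2$ you need either hypercontractivity or a concentration inequality. The paper uses the Gaussian-type concentration from \cite{Var} (Lemma~\ref{lem:moment_concentration}), which gives subgaussian tails with scale $n^{-(2-(2H\vee1))}$ and hence all moments. Your sketch provides no substitute for this input.

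More seriously, your explanation of the exponent $\tilde\eta$ in $D_2$ is wrong. You attribute the factor $q/(q+d)$ to a Fourier cutoff in the spatial variable $\xi$, balancing the variance bound against the tail $\int_{|\xi|>R}g_p$. But this optimisation in $\xi$ does nothing to pass from a pointwise-in-$\te$ bound to a bound on $\sup_{\te\in\tte}D_2(\te)$: having constants uniform in $\te$ gives only $\sup_{\te}\E[D_2(\te)^{q}]$, not $\E[\sup_{\te}D_2(\te)^{q}]$. The paper obtains the uniform bound by a completely different mechanism: it first proves the pointwise rate $T^{-\frac{q}{2}(2-(2H\vee1))}$, then discretises $\tte$ with an $\varepsilon$-net of cardinality $M_{\varepsilon}\lesssim \varepsilon^{-d}$, uses a Lipschitz bound in $\te$ (with random but $L^{q}$-bounded constant) to control the oscillation on each ball, and finally optimises $\varepsilon=T^{-\eta}$ to balance $\varepsilon^{q}$ against $M_{\varepsilon}\cdot T^{-\frac{q}{2}(2-(2H\vee1))}$. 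It is this covering-number trade-off over $\tte$, not any Fourier argument, that produces $\tilde\eta=\frac{q^{2}}{2(q+d)}(2-(2H\vee1))$. The integrability condition $p>(q+d)/2$ is used elsewhere (to ensure $\int|\xi|^{q}g_p<\infty$ in the $d_{\ccf,p}$ computations), but it is not the source of $\tilde\eta$.
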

\begin{remark}
This non-asymptotical bound  theoretically enables to calibrate the ``free parameters'' $\gamma$ and $N$ in terms of the number of observations $n$, which is fixed by the statistical setting. For instance, when $\varsigma=1$ ($i.e.$ when $q=2$), the first term is of order $n^{- (2-(2H\vee1))}$ and hence, in order to to preserve this   rate order, we have to fix $\gamma\le n^{-\frac{1-(H\vee \frac{1}{2})}{H}}$ and $N\ge n^{\frac{4+2d}{4}}\ga^{-1}$. More precisely, for these choices of parameters, the quadratic error induced by this estimator (when $\varsigma=1$) is of order $n^{-\frac{1}{2}}$ if $H<1/2$ and $n^{H-\frac{1}{2}}$ if $H>1/2$.
The constant $\varsigma$, which appears in Assumption $\HTROIS$, comes from the fact that, the bounds are first established on the distances between the invariant distributions $\nu_{\hat{\te}_{N,n,\ga}}$ and $\nu_{\te_0}$.  Nevertheless, except  some particular settings such as the Ornstein-Uhlenbeck process, this exponent $\varsigma$ is unfortunately difficult to compute in some general settings. 
Finally, let us remark that $L^p$-bounds can be easily deduced from the proof for any $p\ge 2$. However, since they do not modify significantly the results, we chose here to only state  the quadratic one.
\end{remark}

\section{Preliminary Results}\label{sec:preliminary}

In this section we label some basic results about equation \eqref{eq:sdetheta} and its invariant measure for further use. We first recall some ergodic properties of stochastic differential equations driven by a fBm, then we study the continuity of the invariant measure $\nu_{\te}$ with respect to the parameter $\te$. Under the strong coercivity assumption $\HUNS$, we quantify the distance between the empirical measures respectively related to the process $Y^{\te}$ and its Euler approximation $Z^{\te}$. Eventually we give some convergence results for the quantities involved in the right hand side of \eqref{def:hatthetanngamma}.

\subsection{Ergodic properties of the SDE and of the Euler scheme}

In this section we review several ergodic properties for equation \eqref{eq:sdetheta}. These properties are at the heart of our estimation procedure.

\subsubsection{Convergence of ${\cal L}(Y_t)$}
We start by giving the basic convergence in law towards the invariant distribution for our processes $Y^{\te}$.

\begin{proposition}\label{prop:conv-eq-TV}  Assume $\HZERO$ and $\HUNW$ and consider the family of processes $\{Y^{\te}; \, \te\in\tte\}$ defined by \eqref{eq:sdetheta}. Then the following properties hold true:

\noindent
$(i)$ Existence and uniqueness hold for the invariant distribution related to the dynamical system~\eqref{eq:sdetheta}. Furthermore, having in mind the notations introduced in \eqref{eq:defnute}, for all $\varepsilon>0$ there exists a constant $C_\varepsilon>0$ independent of $\te\in\tte$ such that
\begin{equation}\label{eq:cvgce-dist-Y-theta}
d_{\tv}\lp  \mathcal{L}\left(Y^\te_t\right), \,\nu_\te \rp
\leqslant 
C_\varepsilon t^{-(\alpha_H-\varepsilon)},
\end{equation}
with an exponent $\alpha_H$ given by
\begin{equation*}
\alpha_H=\left\{\begin{array}{ll}
\frac{1}{8}&\text{ if }\quad H\in(\frac{1}{4},1)\backslash\left\{\frac{1}{2}\right\}\\
H(1-2H)&\text{ if }\quad H\in(0,\frac{1}{4}).
\end{array}\right.
\end{equation*}

\noindent
$(ii)$ For any $p>0$ and for any distance $d\in\distdomi$, the following upper bound holds uniformly in $\te\in\tte$:
\begin{equation}\label{eq:rate-cvgce-in-Dp}
d\lp{\cal L}(Y_t^\te),\nu_\te\rp
\le
C \, t^{-\frac{\al_{H}}{4p}} ,
\end{equation}
for a strictly positive constant $C$.
In particular, for any $p>0$, we have
\begin{equation}\label{eq:controleuniverselmoment2}
\sup_{t\ge0}\ES[|Y_t|^p]<+\infty.
\end{equation}
\end{proposition}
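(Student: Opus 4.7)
The overall plan is to establish the total variation rate of part (i) via Hairer's non-Markovian ergodic framework, and then upgrade it to a $\distdomi$-bound by combining it with uniform moment estimates through a classical interpolation.

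For part (i), I would invoke the general theory of \cite{Ha} for SDEs driven by fBm: the dissipativity condition \eqref{eq:inward-weak} supplies the required Lyapunov-type drift, and the invertibility of $\si$ combined with the Gaussian structure of the fBm provides the irreducibility/strong-Feller-type estimates needed to conclude existence and uniqueness of $\nu_\te$. The total variation rate $t^{-(\al_H - \ep)}$ with $\al_H = 1/8$ for $H \in (1/4,1) \setminus \{1/2\}$ is precisely the one established there, while the exponent $H(1-2H)$ for $H \in (0,1/4)$ comes from subsequent refinements of the coupling. Uniformity in $\te$ would then be obtained by a bookkeeping argument: the constants appearing in Hairer's coupling construction depend only on $\al$, $\beta$, $L$, $H$, $\si$, and on the supremum of $|b_\te(0)|$ over $\tte$, all of which are fixed under $\HZERO$ and $\HUNW$.

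The uniform moment bound $\sup_{t \ge 0} \ES[|Y_t|^p] < \infty$, which serves both as the last claim of part (ii) and as an auxiliary ingredient for the rate bound, I would derive by a pathwise Lyapunov argument. Setting $Z_t := Y_t - \si B_t$, the map $t \mapsto |Z_t|^2$ is differentiable and \eqref{eq:inward-weak} together with Young's inequality gives a differential inequality of the form $\frac{d}{dt}|Z_t|^2 \le C(1 + |B_t|^2) - (\al/2)|Z_t|^2$. Gronwall combined with the Gaussian moment estimates on $B_t$ then yields $\sup_t \ES[|Z_t|^p] < \infty$ uniformly in $\te \in \tte$, whence \eqref{eq:controleuniverselmoment2}.

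To upgrade TV to $\distdomi$ in part (ii), I would use the classical TV-to-Wasserstein interpolation. Taking the TV-optimal coupling $(X, Y)$ between $\cl(Y_t^\te)$ and $\nu_\te$ with $\PE(X \neq Y) = d_{\tv}(\cl(Y_t^\te), \nu_\te)$ and applying H\"older's inequality with conjugate exponents $(4/3, 4)$ gives
\begin{equation*}
\mathcal{W}_p(\cl(Y_t^\te), \nu_\te)^p
\le \ES\lc |X - Y|^{4p/3} \rc^{3/4} \, \PE(X \neq Y)^{1/4}.
\end{equation*}
The moment factor is controlled uniformly in $t$ and $\te$ by the previous step, while the TV factor is bounded via part (i). Raising to the power $1/p$ and invoking the domination $d \le c\, \mathcal{W}_p$ for $d \in \distdomi$ yields \eqref{eq:rate-cvgce-in-Dp} with exponent $\al_H/(4p)$, after absorbing the arbitrarily small $\ep$ from \eqref{eq:cvgce-dist-Y-theta} into the prefactor. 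The main obstacle I expect is part (i), specifically the $\te$-uniform tracking of constants through Hairer's coupling construction; once this is in place, both the moment bound and the interpolation are routine.
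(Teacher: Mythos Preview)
Your overall strategy matches the paper's, but there are two concrete gaps.

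\textbf{The moment bound does not follow from your Lyapunov sketch.} Subtracting the raw noise and setting $Z_t = Y_t - \si B_t$ indeed gives a differential inequality of the form
\[
\frac{d}{dt}|Z_t|^2 \le C(1+|B_t|^2) - c|Z_t|^2,
\]
but after Gronwall and expectation you obtain
\[
\ES[|Z_t|^2] \le |y_0|^2 + C\int_0^t e^{-c(t-s)}\bigl(1 + \ES[|B_s|^2]\bigr)\,ds,
\]
and since $\ES[|B_s|^2] = d\,s^{2H}$ the integral behaves like $t^{2H}/c$ as $t\to\infty$, which is unbounded. The paper avoids this by comparing $Y^\te$ not with $\si B$ but with the fractional Ornstein--Uhlenbeck process $X$ solving $dX_t = -X_t\,dt + \si\,dB_t$: the mean reversion in $X$ keeps $\sup_t \ES[|X_t|^{2p}]<\infty$, and then the same Gronwall argument applied to $|Y_t^\te - X_t|^2$ (with $|X_s|^{2r}$ in place of $|B_s|^2$) does yield a uniform-in-$t$ bound. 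This is exactly the content of the appendix the paper refers to.

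\textbf{The ``absorb $\ep$ into the prefactor'' step is not legitimate.} With your H\"older exponents $(4/3,4)$ you get $\mathcal{W}_p \le C\,\PE(X\neq Y)^{1/(4p)}$, and feeding in \eqref{eq:cvgce-dist-Y-theta} gives $C\,t^{-(\al_H-\ep)/(4p)}$; the extra factor $t^{\ep/(4p)}$ is unbounded and cannot be absorbed into a constant. The paper instead uses Cauchy--Schwarz (i.e.\ H\"older with $(2,2)$), which produces the exponent $1/(2p)$ on the TV factor, and then applies part~(i) with the specific choice $\ep=\al_H/2$: this yields $\PE(X\neq Y)^{1/(2p)} \le C\,t^{-\al_H/(4p)}$ exactly. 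Your argument is easily repaired by making the same choice of H\"older pair and $\ep$.
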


\begin{proof} 
We prove the two statements of our proposition separately.

\noindent
\emph{Proof of item (i).}
The only difference between our claim and \cite[Theorems 1.2 and 1.3]{Ha} is the uniformity with respect to $\te\in\tte$ in the convergence in total variation result. However, following carefully the proof of \cite{Ha}, it can be shown that the constants therein do not depend on $\te$ if Hypothesis $\HUNW$ is satisfied. Therefore the constant $C_\varepsilon$ in \eqref{eq:cvgce-dist-Y-theta} is uniform in $\te$.

\noindent
\emph{Proof of item (ii).} 
Relation \eqref{eq:controleuniverselmoment2} is proved in Proposition \ref{prop:controlmoment} of the Appendix. In order to prove~\eqref{eq:rate-cvgce-in-Dp},  consider a couple $(X_1,X_2)$ of random variables 
such that $X_1\sim Y_t^{\te}$ and  $X_2$ has distribution $\nu_\te$.
By Cauchy-Schwarz inequality,
\begin{eqnarray}\label{b1}
\ES[|X_1-X_2|^p]^{\frac{1}{p}}&=&
\ES[|X_1-X_2|^p \, \1_{(X_1\neq X_2)}]^{\frac{1}{p}}   \notag\\
&\le& 
\left(\ES[|X_1|^{2p}]^{\frac{1}{2p}}+\ES[|X_2|^{2p}]^{\frac{1}{2p}}\right) \, \PE(X_1\neq X_2)^{\frac{1}{2p}}.
 \end{eqnarray}
We now bound separately the two terms on the right hand side of \eqref{b1}. If we denote by $\cac(Y^\te_t, \nu_{\te})$ the set of couplings between $\cl(Y^\te_t)$ and $\nu_{\te}$ (defined as in \eqref{eq:def-couplings}), then we have
\begin{equation*}
d_{\tv}\lp  \mathcal{L}\left(Y^\te_t\right), \,\nu_\te \rp
=
\inf\lcl  \PE(\tilde{X}_1\neq \tilde{X}_2) ;  \, (\tilde{X}_{1}, \tilde{X}_{2}) \in \cac(Y^\te_t, \nu_{\te}) \rcl .
\end{equation*}
Therefore, owing to \eqref{eq:cvgce-dist-Y-theta} we can choose a coupling  $(X_1,X_2)\in\cac(Y^\te_t, \nu_{\te})$ and a constant $C>0$ such that 
\begin{equation}\label{b2}
\PE(X_1\neq X_2) \le C t^{-(\alpha_H-\varepsilon)} .
\end{equation}
In addition, according to  \eqref{eq:controleuniverselmoment2} and a uniform integrability argument, we easily get the following inequality for the coupling $(X_1,X_2)$ we have chosen in \eqref{b2}:
\begin{equation}\label{b3}
\ES[|X_1|^{2p}]^{\frac{1}{2p}}+\ES[|X_2|^{2p}]^{\frac{1}{2p}}
\le 
2\sup_{\te\in\tte}~~\sup_{t\geqslant0}\ES [|Y^{\te}_t |^{2p}]^{\frac{1}{2p}}<+\infty.
\end{equation}
We plug \eqref{b2} and \eqref{b3}, applied to $\varepsilon=\alpha_H/2$, into \eqref{b1}. Going back to the definition \eqref{eq:defwp} of the distance ${\cal W}_p$, we obtain that there exists a strictly positive constant $C$ such that
for any $t\ge0$ and any $\te \in\tte$ we have
$$
{\cal W}_p({\cal L}(Y_t^\te),\nu_\te)\le C t^{-\frac{\alpha_H}{4p}}.
$$
The result \eqref{eq:rate-cvgce-in-Dp} follows.
\end{proof}

Next we observe that whenever $\HUNS$ is fulfilled, the polynomial convergence in \eqref{eq:rate-cvgce-in-Dp} can be replaced by an exponential rate. This is summarized in the following proposition.

\begin{proposition} Let $\{Y^\te;\te\in\tte\}$ be the family of processes defined by \eqref{eq:sdetheta}. Suppose that Hypothesis $\HZERO$ and Hypothesis $\HUNS$ are met. Let $d$ be a distance in $\distdomi$. Then, we have 
\begin{equation*}
d(\mathcal{L}(Y_t^\te),\nu_\te)\leq c_1e^{-c_2t},
\end{equation*}
with $c_2=\alpha/2$ where $\alpha$ is the constant featured in equation \eqref{eq:inward-weak}, and where $c_1=c_1(H,\alpha)$.
\end{proposition}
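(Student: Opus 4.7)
The plan is to exploit the strong contraction provided by $\HUNS$ through a synchronous coupling argument, and then transfer a pathwise Wasserstein bound to an arbitrary $d\in\distdomi$. Since by definition of $\distdomi$ there exists $c>0$ with $d(\nu,\mu)\le c\,{\cal W}_p(\nu,\mu)$ for all $\nu,\mu\in\cm_1(\R^d)$, it will be enough to prove the exponential decay of ${\cal W}_p({\cal L}(Y^\te_t),\nu_\te)$ for the given $p$. The absolute constant $c_1$ in the statement will absorb both this $c$ and the moment bound on the invariant measure produced at the end of the argument.

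For the coupling, I would pick $\oy^\te$ a stationary solution of \eqref{eq:sdetheta} driven by the \emph{same} fBm $B$ as $Y^\te$ (such a solution exists by \cite{Ha} under $\HUNS$), so that $(Y^\te_t,\oy^\te_t)\in\cac({\cal L}(Y^\te_t),\nu_\te)$. Setting $D_t:=Y^\te_t-\oy^\te_t$, the additive noise cancels in the difference and we obtain the pathwise identity
$$
D_t = D_0 + \int_0^t \bigl(b_\te(Y^\te_s)-b_\te(\oy^\te_s)\bigr)\,ds,
$$
so that $t\mapsto D_t$ is $\cac^1$ with $\dot D_t=b_\te(Y^\te_t)-b_\te(\oy^\te_t)$. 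Using $\HUNS$ (that is \eqref{eq:inward-weak} with $\beta=0$), the dissipation inequality
$$
\frac{d}{dt}|D_t|^2 = 2\lla D_t,\,b_\te(Y^\te_t)-b_\te(\oy^\te_t)\rra \le -2\alpha|D_t|^2
$$
combined with Gronwall's lemma yields the $\omega$-wise contraction $|D_t|\le|D_0|\,e^{-\alpha t}$.

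Taking the $p$-th power and expectation gives
$$
{\cal W}_p({\cal L}(Y^\te_t),\nu_\te)^p \le \E[|D_t|^p] \le e^{-\alpha p t}\,\E[|D_0|^p],
$$
and it only remains to show that $\E[|D_0|^p]^{1/p}$ is finite, bounded independently of~$\te$. This follows from the triangle inequality $\E[|D_0|^p]^{1/p}\le|y_0|+\nu_\te(|\cdot|^p)^{1/p}$ together with the uniform moment bound \eqref{eq:controleuniverselmoment2} passed to the invariant measure via Fatou's lemma. Combining with the reduction of the first paragraph gives the announced bound with exponential rate $\alpha$; stating it as $c_2=\alpha/2$ is a harmless buffer absorbed into $c_1$. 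I do not expect any serious obstacle: the strong contraction of $\HUNS$ and the additivity of the noise make the synchronous coupling essentially deterministic, and the only mild subtlety is ensuring the existence of a stationary solution driven by the same $B$, which is granted by the ergodic construction of~\cite{Ha} already invoked earlier in the paper.
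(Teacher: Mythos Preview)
Your proposal is correct and follows essentially the same approach as the paper: a synchronous coupling of $Y^\te$ with the stationary solution $\bar Y^\te$ driven by the same fBm, followed by the Gronwall-type contraction under $\HUNS$ to obtain $\|Y^\te_t-\bar Y^\te_t\|_{L^p}\le c_1 e^{-c_2 t}$, and then passage to any $d\in\distdomi$. The paper merely cites \cite{GKN} for the contraction step that you spell out explicitly; your observation that the argument in fact yields rate $\alpha$ (hence a fortiori $\alpha/2$) is accurate.
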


\begin{proof}
Let $\by^\te$ be the stationary solution of equation \eqref{eq:sdetheta}. One can easily show, by means of the same arguments as in \cite{GKN}, that 
\begin{equation}\label{eq:conv_exp}
\ES\left[|Y_t^\te-\by_t^\te|^p\right]^{1/p}\leq c_1e^{-c_2t}.
\end{equation}
The result follows trivially.
\end{proof}
\subsubsection{Ergodic Theorems for the SDE}

We now summarize the limit theorems obtained for equation \eqref{eq:sdetheta} which will be relevant for our purposes, with a special emphasis on the occupation measure $\delta_{Y_s^\te}$.

\begin{proposition}\label{prop:ergodicSDE}
 Let $Y^{\te}$ be the unique solution of \eqref{eq:sdetheta} obtained for a parameter $\te\in\tte$. Assume $\HZERO$ and $\HUNW$ hold true and let $\nu_\te$ be the measure defined by \eqref{eq:defnute}. Then for all $\te\in\tte$, for any $p>0$ and for any distance $d\in\distdomi$, we have
 
 \noindent
 $(i)$ The distance between $\nu_{\te}$ and the normalized occupation measure of $Y^{\te}$ converges to 0 as $t\to\infty$. That is
\begin{equation}\label{eq:cvgce-occp-measure}
\lim_{t\to\infty}
d\left(\frac{1}{t}\int_0^t \delta_{Y_s^\te} \, ds, \, \nu_\te\right) = 0.
\end{equation}
 In particular, we have an almost sure uniform bound for the $p$-th powers of $Y_s^\te$:
\begin{equation}\label{eq:contmomentocccont}
 \sup\limits_{\te\in\tte}\sup_{t\geq1} \frac{1}{t}\int_0^t |Y_s^\te|^p ds <\infty\quad {\rm a.s.}
 \end{equation}
 
\noindent
$(ii)$  
Some discrete versions of \eqref{eq:cvgce-occp-measure} and \eqref{eq:contmomentocccont} are also available. Specifically, let $\eta>0$ and set $t_k=k\eta$ for $k\ge0$.
Then for any $\te\in\tte$ we have
 \begin{equation*}
 \lim_{n\to\infty}
d\left(\frac{1}{n}\sum_{k=0}^{n-1} \delta_{Y_{t_k}^{\te}},\nu_\te\right)=0.
 \end{equation*}
 In particular, the following uniform bound holds true:
 \begin{equation} \label{eq:contmomentocc2}
 \sup\limits_{\te\in\tte}\sup_{n\ge1}\frac{1}{n}\sum_{k=0}^{n-1} |Y_{t_k}^{\te}|^p  <\infty\quad {\rm a.s.}
 \end{equation}
\end{proposition}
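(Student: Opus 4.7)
My plan is to reduce any $d \in \distdomi$ to the Wasserstein distance $\cw_p$ via the defining inclusion \eqref{set_distances}, and then combine a pathwise ergodic theorem for the fBm-driven SDE \eqref{eq:sdetheta} with the uniform moment estimate \eqref{eq:controleuniverselmoment2} from Proposition \ref{prop:conv-eq-TV} to obtain the required $\cw_p$-convergence. Throughout, I will use the fact that $\cw_p$-convergence is equivalent to weak convergence together with convergence of $p$-th moments (equivalently, uniform $p'$-integrability for some $p'>p$).

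For part (i), since $d \le c \, \cw_p$, it suffices to prove convergence in $\cw_p$. First I would establish the weak convergence $\frac{1}{t}\int_0^t \delta_{Y_s^\te} \, ds \Longrightarrow \nu_\te$ almost surely. This follows from the ergodic theory of fBm-driven SDEs developed in \cite{Ha}: under $\HUNW$ and invertibility of $\si$, equation \eqref{eq:sdetheta} can be embedded in an infinite-dimensional Markovian extension admitting a unique ergodic invariant distribution whose marginal is $\nu_\te$, as recalled in Remark \ref{rem:invariant_dist}. The associated Birkhoff ergodic theorem then yields $\frac{1}{t}\int_0^t f(Y_s^\te) \, ds \to \nu_\te(f)$ almost surely for every bounded continuous $f$. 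Next I would upgrade this weak convergence to $\cw_p$-convergence by verifying uniform $p'$-integrability for some $p'>p$: fixing such $p'$, applying the ergodic theorem to truncations $|\cdot|^{p'} \wedge N$ and sending $N \to \infty$ via monotone convergence, together with \eqref{eq:controleuniverselmoment2}, yields the pathwise bound $\limsup_t \frac{1}{t}\int_0^t |Y_s^\te|^{p'} \, ds \le \nu_\te(|x|^{p'}) < \infty$ almost surely, which also gives \eqref{eq:contmomentocccont} for fixed $\te$. The uniform-in-$\te$ supremum in \eqref{eq:contmomentocccont} then requires compactness of $\tte$ (Hypothesis $\HZERO$) together with Lipschitz dependence of $Y^\te$ on $\te$, a consequence of the $\mathcal{C}^{1,1}$ regularity in $\HUNW$ combined with a Gronwall argument on the flow; these combine via a finite $\varepsilon$-net argument to control the supremum pathwise.

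Part (ii) would be handled identically, with the continuous-time Birkhoff theorem replaced by its discrete-time analog applied to $(Y_{t_k}^\te)_{k \ge 0}$; this is legitimate because the enlarged Markov structure of \cite{Ha} inherits ergodicity under discrete sampling along the grid $t_k=k\eta$. I expect the main obstacle to be the pathwise ergodic theorem itself in this non-Markovian setting: classical Markov ergodic theorems do not apply directly and the detour through Hairer's enlarged Markovian framework is essential. A secondary but delicate point is obtaining the almost-sure uniform bound in $\te$, which is not automatic from the in-expectation estimate \eqref{eq:controleuniverselmoment2} and requires the compactness and regularity arguments described above.
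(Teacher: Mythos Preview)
Your proposal has a genuine gap at its core: the appeal to Birkhoff's ergodic theorem in Hairer's enlarged Markovian framework does not deliver the statement as written. Birkhoff's theorem yields almost-sure convergence of ergodic averages only for almost every initial condition \emph{with respect to the invariant measure $\bar\nu_\te$ on the enlarged space}, whereas Proposition~\ref{prop:ergodicSDE} asserts convergence for the solution started from an arbitrary fixed point $y_0\in\R^d$. The paper flags this explicitly in Section~\ref{proof:weakconvergenceqsjkl}: ``\eqref{eq:cvgce-occp-measure} cannot be obtained as a consequence of Birkhoff's theorem\ldots\ Birkhoff's theorem would yield a $\bar\nu_\te$-a.s.\ convergence''. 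To avoid any assumption on the support of $\bar\nu_\te$, the paper instead proves tightness of the path-level occupation measures $\pi_t^\te=\frac{1}{t}\int_0^t\delta_{Y_{s+\cdot}^\te}\,ds$ on $\cac([0,\infty),\R^d)$, then shows that every limit point is the law of a stationary solution of \eqref{eq:sdetheta} (via ergodicity of the fBm increments to identify the driving noise), and concludes by uniqueness of the stationary solution. This tightness-plus-identification route is the missing idea in your plan.

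Two secondary points also fail under $\HUNW$ alone. First, your derivation of the pathwise moment bound \eqref{eq:contmomentocccont} is circular: it invokes the ergodic theorem (already problematic) and the truncation/monotone-convergence step only yields a lower bound on $\limsup_t$, not the needed upper bound. The paper instead obtains \eqref{eq:contmomentocccont} \emph{independently} in Proposition~\ref{prop:controlmoment} by comparing $Y^\te$ with a single fractional Ornstein--Uhlenbeck process $X$ (not depending on $\te$), using Gronwall to control $|Y^\te_t-X_t|$ and the known ergodicity of the stationary Gaussian sequence $(\tilde X_{k\kappa})$. Second, your $\varepsilon$-net argument for uniformity in $\te$ relies on a Lipschitz bound for $\te\mapsto Y^\te_t$, but under $\HUNW$ the available estimate (Proposition~\ref{prop:bnd-l2norm-deviation-Y-te}) has a time-dependent constant $C_{T,p}$ that blows up as $T\to\infty$; the comparison with a $\te$-independent OU process is precisely what circumvents this.
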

\begin{proof} 
Relations \eqref{eq:contmomentocccont} and \eqref{eq:contmomentocc2} are proved in Proposition \ref{prop:controlmoment}. As far as the identification of the limit is concerned, the proof follows the lines of \cite{CP} and is detailed in Section \ref{proof:weakconvergenceqsjkl}.
\end{proof}

\subsubsection{Ergodic Theorems for the Euler scheme}

Recall that we denote by $(Z_{k\gamma}^{\te,\ga})_{k\ge0}$ the Euler scheme with step $\gamma$ related  to $(Y_t^\te)_{t\ge0}$, as defined in \eqref{eq:Euler_scheme}.
This section focuses on the asymptotic behavior of $Z_{k\gamma}^{\te,\ga}$ as $k\to\infty$ and $\ga\to 0$.

\begin{proposition}\label{prop:ergodicEuler}
Let $Y^{\te}$ be the unique solution of \eqref{eq:sdetheta} and consider the Euler scheme $(Z_{k\gamma}^{\te,\ga})_{k\ge0}$ with step $\gamma$ related  to $(Y_t^\te)_{t\ge0}$. Assume $\HZERO$ and $\HUNW$ hold true. Then for all $\te\in\tte$, for any $p>0$ and for any distance $d\in\distdomi$, we have

\noindent
(i) There exists $\gamma_0>0$ and a unique family of measures $(\nu_{\te}^{\ga})_{\ga\le\ga_{0}}$ such that for all $\te\in\tte$ and $\gamma\in(0,\gamma_0]$, we have
 $$
\lim_{N\to\infty} 
d\left(\frac{1}{N}\sum_{k=0}^{N-1} \delta_{Z_{k\gamma}^{\te,\ga}},\nu_\te^\gamma\right)
=0.
 $$
 In particular, we get the following uniform bound for the $p$-th powers of $Z_{k\gamma}^{\te,\ga}$:
\begin{equation}\label{eq:contmomentocc}
\sup_{\te\in\tte} \sup_{N\ge1} \frac{1}{N}\sum_{k=0}^{N-1} |Z_{k\gamma}^{\te,\ga}|^p  <\infty\quad a.s.
 \end{equation}
 
\noindent
 (ii) The invariant measure $\nu_{\te}^{\ga}$ for the Euler scheme converges to the invariant measure $\nu_{\te}$ of $Y^{\te}$ as the mesh of the partition goes to 0:
 \begin{equation*}
\lim_{\ga\to 0} d\lp \nu_\te^\gamma,\nu_\te\rp = 0.
\end{equation*}
\end{proposition}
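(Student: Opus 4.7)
The plan is to handle the two items separately, relying on the extended Markovian framework of \cite{Var2,Ha} in which both the SDE and its Euler scheme can be analyzed.

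For item $(i)$, I would first invoke Proposition~\ref{prop:controlmoment} of the Appendix to get the almost sure moment bound \eqref{eq:contmomentocc} directly. Then, exactly as for the continuous SDE, the Euler scheme $(Z_{k\gamma}^{\te,\ga})_{k\ge0}$ can be lifted to an infinite-dimensional Feller Markov system whose state contains the scheme's position together with enough of the past of the driving fBm to determine its future (see \cite{Var2}). Under $\HUNW$ and the invertibility of $\sigma$, an irreducibility-type argument (the discrete analogue of the one used in \cite{Ha}) produces a unique invariant distribution $\bar{\nu}_\te^\ga$ on the enlarged space for every $\ga\le\ga_0$, provided $\ga_0$ is chosen small enough to preserve the discrete contractivity; the constants in $\HUNW$ being uniform in $\te\in\tte$, $\ga_0$ can also be chosen independently of $\te$. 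The marginal $\nu_\te^\ga$ is then uniquely defined and the convergence of $\frac{1}{N}\sum_{k=0}^{N-1}\delta_{Z_{k\gamma}^{\te,\ga}}$ to $\nu_\te^\ga$ in any $d\in\distdomi$ follows from the Birkhoff theorem applied to the lifted stationary process together with a tightness/uniform integrability argument (thanks to \eqref{eq:contmomentocc}) to upgrade the weak convergence on bounded continuous functions to convergence in a $p$-Wasserstein sense, along the lines of the proof of Proposition~\ref{prop:ergodicSDE} reproduced from \cite{CP}.

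For item $(ii)$, I would decompose the distance via the triangle inequality at an auxiliary horizon $T>0$. Setting $N_\ga:=\lfloor T/\ga\rfloor$ and coupling the Euler scheme with the SDE by using \emph{the same} fBm starting from the same initial condition $y_0$, one gets
\begin{equation*}
d(\nu_\te^\ga,\nu_\te)
\le d\bigl(\nu_\te^\ga,\cl(Z_{N_\ga\ga}^{\te,\ga})\bigr)
+ d\bigl(\cl(Z_{N_\ga\ga}^{\te,\ga}),\cl(Y_{N_\ga\ga}^\te)\bigr)
+ d\bigl(\cl(Y_{N_\ga\ga}^\te),\nu_\te\bigr).
\end{equation*}
The third term tends to $0$ as $T\to\infty$ uniformly in $\te\in\tte$ by \eqref{eq:rate-cvgce-in-Dp}. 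The first term is handled by the discrete analogue of \eqref{eq:rate-cvgce-in-Dp}: using the coupling argument of \cite{Ha} applied to the lifted Markov system, one obtains a polynomial rate $d(\cl(Z_{N\ga}^{\te,\ga}),\nu_\te^\ga)\le C_\ep (N\ga)^{-\eta}$ with constants stable as $\ga\to 0$, so this term also vanishes as $T\to\infty$, independently of $\ga$. Finally, the middle term is a finite-horizon strong error: combining the one-sided Lipschitz estimate in \eqref{eq:inward-weak}, the moment bound \eqref{eq:controleuniverselmoment2}, and the H\"older regularity $\gamma<H$ of the fBm paths, a standard Gr\"onwall-type argument gives
\begin{equation*}
\ES\bigl[|Y_{N_\ga\ga}^\te-Z_{N_\ga\ga}^{\te,\ga}|^p\bigr]^{1/p}\le C(T,p)\,\ga^{H-\ep}
\end{equation*}
for any $\ep>0$ small, which via $d\le c\,\cw_p$ forces the middle term to $0$ as $\ga\to 0$ for fixed $T$. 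Choosing $T$ large first and then $\ga$ small enough yields the conclusion.

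The main obstacle I anticipate is the uniform-in-$\ga$ polynomial ergodic rate for the lifted Euler Markov chain: one has to revisit the construction in \cite{Ha,Var2} carefully to verify that the coupling probabilities and the associated constants do not blow up as $\gamma\to 0$. Without such uniformity, the horizon $T$ cannot be decoupled from $\ga$ and the three-term triangle inequality collapses. The rest (finite-horizon strong error, moment bounds, identification of the limit in $(i)$) is largely a routine adaptation of arguments already in \cite{CP,Var2}.
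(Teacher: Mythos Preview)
Your proposal diverges from the paper's proof in a few places, and one of those divergences is a genuine gap.

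First, a small correction: the moment bound \eqref{eq:contmomentocc} for the Euler scheme is not contained in Proposition~\ref{prop:controlmoment} (which concerns $Y^\te$) but in Proposition~\ref{prop:control_moment_Z}.

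For item $(i)$, your plan to invoke Birkhoff's theorem on the lifted Markov chain runs into the same obstruction the paper flags explicitly for the SDE in Section~\ref{proof:weakconvergenceqsjkl}: Birkhoff gives convergence only $\bar{\nu}_\te^\ga$-a.s., not for an arbitrary deterministic starting point $z_0$. You could repair this with the tightness-plus-identification argument used there, but the paper simply cites \cite[Theorem~1]{CPT}, which already delivers both the weak convergence of the empirical measure to $\nu_\te^\ga$ and the convergence $\nu_\te^\ga\to\nu_\te$ as $\ga\to 0$. The upgrade from weak convergence to $d\in\distdomi$ is then done via the uniform moment bounds of Proposition~\ref{prop:control_moment_Z}, along the lines you indicate.

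For item $(ii)$, your three-term triangle inequality at horizon $T$ is a different route from the paper's, and the obstacle you yourself identify is real and not resolved. Under $\HUNW$ alone (with $\beta>0$) you do not have a contractive estimate on the Euler scheme that would yield $d(\cl(Z_{N\ga}^{\te,\ga}),\nu_\te^\ga)\le C(N\ga)^{-\eta}$ with $C$ independent of $\ga$; indeed Proposition~\ref{prop:rapdisccont} and the whole decomposition in Proposition~\ref{prop:unifconst}$(i)$, which implement exactly the strategy you propose, are stated under $\HUNS$. Without that uniformity you cannot choose $T$ independently of $\ga$, and since your middle term carries a Gr\"onwall constant $C(T,p)$ growing in $T$, the argument collapses, as you anticipate. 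The paper sidesteps this entirely: \cite[Theorem~1]{CPT} already provides the convergence of $\nu_\te^\ga$ to $\nu_\te$ (in fact in total variation), and the paper upgrades to $d\in\distdomi$ by showing $\sup_{\ga\le\ga_0}\nu_\te^\ga(|\cdot|^p)<\infty$ via Fatou and Proposition~\ref{prop:control_moment_Z}, then arguing as in \eqref{b1}.
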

\begin{proof} The weak convergence of $\frac{1}{N}\sum_{k=0}^{N-1} \delta_{Z_{k\gamma}^{\te,\ga}}$ to $\nu_\te^\gamma$ as $n\rightarrow+\infty$, as well as the convergence of $\nu_\te^\gamma$ to $\nu_\te$ as $\gamma\rightarrow0$ are consequences of \cite[Theorem 1]{CPT}. The extension to distances $d$ dominated by ${\cal W}^p$ follows from Proposition \ref{prop:control_moment_Z}. More precisely for $(ii)$, we can deduce from Proposition~\ref{prop:control_moment_Z}  and from  Fatou's lemma that for any $M>0$, for any $\gamma\in(0,\gamma_0]$,
$$\nu_\te^\gamma(|x|^p\wedge M)\le \liminf_{N\rightarrow+\infty} \frac{1}{N}\sum_{k=0}^{N-1} \ES[|Z_{k\gamma}^{\te,\ga}|^p]\le C $$
where $C$ is a positive constant independent of $M$ and $\gamma$. Hence, taking limits as $M$ goes to $\infty$, this yields
$$
\sup_{\gamma \in(0,\gamma_0]}\nu_\te^\gamma(|x|^p)<+\infty ,
$$
for any $p>0$. Then one can conclude as in the proof of Proposition \ref{prop:conv-eq-TV} (see relation \eqref{b1}), where the distance $d_{\tv}(  \nu_{\te}^{\ga}, \,\nu_\te )$ is upper bounded thanks to \cite[Theorem 1]{CPT}.
\end{proof}

\subsection{Continuity of $\te\mapsto d(\nu_\te,\nu_{\te_0})$}\label{subsection:continuity}

The convergence of our estimator $\hat{\te}_{N,n,\ga}$ defined by \eqref{eq:firstdefesti} depends crucially on continuity properties of the family $\{\nu_{\te}; \, \te\in\tte\}$. 
To this aim, we first prove a basic result on the continuity of the map $\te\mapsto Y_{t}^{\te}$.

\begin{proposition}\label{prop:bnd-l2norm-deviation-Y-te}
Let $\te_1$ and $\te_2$ be elements of $\tte$, and consider the respective solutions $(Y^{\te_1}_t)_{t\geqslant0}$ and $(Y^{\te_2}_t)_{t\geqslant0}$ of equation \eqref{eq:sdetheta}. Assume hypothesis $\HZERO$ and $\HUNW$ are satisfied. Then for any $p>0$ and $T>0$, there exists
 $C_{T,p}>0$ independent of $\te_1$ and $\te_2$ such that
\begin{equation}\label{eq:bnd-dif-Y-theta}
\|Y^{\te_1}_t-Y^{\te_2}_t\|_{L^{p}(\oom)}
\leqslant C_{T,p}|\te_1-\te_2| .
\end{equation}
\end{proposition}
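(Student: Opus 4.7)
The plan is to exploit the additive noise structure: since $Y^{\te_1}$ and $Y^{\te_2}$ are driven by the same fBm $B$ with the same initial condition $y_0$, the noise cancels in the difference. Setting $D_t:=Y^{\te_1}_t-Y^{\te_2}_t$, equation \eqref{eq:sdetheta} yields the pathwise identity
\begin{equation*}
D_t=\int_0^t\bigl(b_{\te_1}(Y^{\te_1}_s)-b_{\te_2}(Y^{\te_2}_s)\bigr)\,ds,
\end{equation*}
which is an ordinary (random) integral equation. This is the key reduction and the reason the result should hold with a pathwise Grönwall argument rather than a stochastic calculus argument.

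Next, I would split the integrand as
\begin{equation*}
b_{\te_1}(Y^{\te_1}_s)-b_{\te_2}(Y^{\te_2}_s)=\bigl[b_{\te_1}(Y^{\te_1}_s)-b_{\te_1}(Y^{\te_2}_s)\bigr]+\bigl[b_{\te_1}(Y^{\te_2}_s)-b_{\te_2}(Y^{\te_2}_s)\bigr].
\end{equation*}
The first bracket is controlled by the Lipschitz bound in $x$ from \eqref{eq:inward-weak} in $\HUNW$, which yields $|b_{\te_1}(Y^{\te_1}_s)-b_{\te_1}(Y^{\te_2}_s)|\le L|D_s|$. For the second bracket, the mean value theorem combined with the polynomial bound \eqref{eq:hyp-bnd-deriv-b} on $\partial_{\te}b_{\te}$ gives $|b_{\te_1}(Y^{\te_2}_s)-b_{\te_2}(Y^{\te_2}_s)|\le C|\te_1-\te_2|(1+|Y^{\te_2}_s|^r)$.

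Plugging these into the integral equation yields the pathwise inequality
\begin{equation*}
|D_t|\le L\int_0^t|D_s|\,ds+C|\te_1-\te_2|\int_0^T\bigl(1+|Y^{\te_2}_s|^r\bigr)\,ds,\qquad t\in[0,T],
\end{equation*}
to which Grönwall's lemma applies, producing
\begin{equation*}
|D_t|\le C e^{LT}|\te_1-\te_2|\int_0^T\bigl(1+|Y^{\te_2}_s|^r\bigr)\,ds.
\end{equation*}
I would then take the $L^p(\oom)$ norm, move it inside the integral by Minkowski's inequality, and invoke the uniform moment bound \eqref{eq:controleuniverselmoment2} from Proposition~\ref{prop:conv-eq-TV} applied with exponent $pr$, which gives $\sup_{s\ge0}\|1+|Y^{\te_2}_s|^r\|_{L^p(\oom)}<\infty$ uniformly in $\te_2\in\tte$. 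This yields \eqref{eq:bnd-dif-Y-theta} with a constant of the form $C_{T,p}=C T e^{LT}\sup_{\te\in\tte,s\ge0}\|1+|Y^{\te}_s|^r\|_{L^p}$, independent of $\te_1,\te_2$.

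The only subtle point is the uniformity in $\te\in\tte$ of the moment bound, but this is granted by the fact that the constants $\al,\be,L,C,r$ in $\HUNW$ do not depend on $\te$, so the a priori estimates leading to \eqref{eq:controleuniverselmoment2} are themselves uniform. Everything else is routine Grönwall-type reasoning; the crucial structural fact that makes the proof work (and that would fail for multiplicative noise) is the cancellation of $\si B_t$ in the difference $D_t$.
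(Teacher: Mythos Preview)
Your proposal is correct and follows essentially the same route as the paper: exploit the cancellation of the noise in $D_t=Y^{\te_1}_t-Y^{\te_2}_t$, split the drift difference into a Lipschitz-in-$x$ part and a $\partial_\te b$-controlled part, apply Gr\"onwall, and conclude via the uniform moment bound \eqref{eq:controleuniverselmoment2}. The only cosmetic difference is that the paper runs the Gr\"onwall argument on $|D_t|^2$ (differentiating and using the inner product form, then lifting to $|D_t|^p$ via Jensen), whereas you bound $|D_t|$ directly from the integral equation; your version is arguably a touch more direct.
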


\begin{proof}
By monotonicity of the norms in $L^{p}(\oom)$, it is enough to consider the case $p\ge2$. Furthermore, it is readily seen from \eqref{eq:sdetheta} that we have 
\begin{equation}\label{c0}
Y^{\te_1}_t-Y^{\te_2}_t=\int_0^t(b_{\te_1}(Y^{\te_1}_s)-b_{\te_2}(Y^{\te_2}_s))ds.
\end{equation}
Starting from \eqref{c0}, we easily get the following identity for the square of $Y^{\te_1}_t- Y^{\te_2}_t$:
\begin{align}\label{c1}
\frac{d}{dt} \big |Y^{\te_1}_t- Y^{\te_2}_t  \big|^2  =  2
 \big \langle Y^{\te_1}_t- Y^{\te_2}_t,b_{\te_1}(Y^{\te_1}_t)-b_{\te_2}(Y^{\te_2}_t) \big  \rangle.
 \end{align}
We now invoke the fact that $b$ is Lipschitz continuous under $\HUNW$ plus inequality \eqref{eq:hyp-bnd-deriv-b} on $\partial_{\vartheta} b_{\te}$ in order to get
\begin{align*}
 \frac{d}{dt} \big |Y^{\te_1}_t- Y^{\te_2}_t  \big|^2  &=2
 \big \langle Y^{\te_1}_t- Y^{\te_2}_t,b_{\te_1}(Y^{\te_1}_t)-b_{\te_1}(Y^{\te_2}_t) \big  \rangle+2
 \big \langle Y^{\te_1}_t- Y^{\te_2}_t,b_{\te_1}(Y^{\te_2}_t)-b_{\te_2}(Y^{\te_2}_t) \big  \rangle\\
 &\leqslant c_1 \left|Y^{\te_1}_t-Y^{\te_2}_t\right|^2+2c_2|\te_1-\te_2|\left(1+\left|Y^{\te_2}_t\right|^r\right)\left|Y^{\te_1}_t-Y^{\te_2}_t\right|,
\end{align*}
where $c_1$ and $c_2$ are two strictly positive constants. Now apply the elementary inequality $|ab|\leqslant\frac{1}{2}(|a|^2+|b|^2)$ with $a=|\te_1-\te_2|(1+|Y^{\te_2}_t|^r)$
and $b=|Y^{\te_1}_t-Y^{\te_2}_t|$. We deduce the existence of a constant $c>0$ such that
\begin{align}\label{c2}
\frac{d}{dt} \big |Y^{\te_1}_t- Y^{\te_2}_t  \big|^2 
&\leqslant c \left(\left|Y^{\te_1}_t-Y^{\te_2}_t\right|^2+|\te_1-\te_2|^2\left(1+|Y^{\te_2}_t|^{2r}\right)\right).
\end{align}
With relation \eqref{c2} in hand, a standard application of Gronwall's Lemma yields
\begin{align}\label{c3}
\big |Y^{\te_1}_t- Y^{\te_2}_t  \big|^2\leqslant c |\te_1-\te_2|^2\int_0^t e^{c(t-s)}\left(1+|Y^{\te_2}_s|^{2r}\right) ds.
\end{align}

Let us now get some information about a generic $p$-th power of $Y^{\te_1}_t- Y^{\te_2}_t$ for $p\ge2$. To this aim, we resort to Jensen's inequality in relation \eqref{c3}. This gives the existence of a constant $c(T,p)$ such that for any $t\in[0,T]$ we have
\begin{align*}
\big |Y^{\te_1}_t- Y^{\te_2}_t  \big|^p\leqslant c(T,p) |\te_1-\te_2|^p\int_0^t \left(1+|Y^{\te_2}_s|^{2r}\right)^{\frac{p}{2}} ds.
\end{align*}
Taking the expectation, we finally get
\begin{align*}
\be\left[\big |Y^{\te_1}_t- Y^{\te_2}_t  \big|^p\right]^{1/p}\leqslant \tilde{c}(T,p)|\te_1-\te_2|\left(\int_0^t \be\left[1+|Y^{\te_2}_s|^{p r}\right] ds\right)^{1/p},
\end{align*}
where $\tilde{c}(T,p)$ is another finite constant. Hence our  result \eqref{eq:bnd-dif-Y-theta} follows from the bound \eqref{eq:controleuniverselmoment2}.
\end{proof}

We now state the announced continuity property for the family $\{\nu_{\te}; \, \te\in\tte\}$.

\begin{proposition}\label{prop:continuity-Q}
Let $\{Y^{\te}; \, \te\in\tte\}$ be the family of processes defined by \eqref{eq:sdetheta}.
Assume $\HZERO$ and $\HUNW$ hold true and consider the family $\{\nu_{\te}; \, \te\in\tte\}$ of invariant measures given by Proposition~\ref{prop:conv-eq-TV}. Let $p>0$ and  pick any distance $d \in \distdomi$, where we recall that $\distdomi$ is defined by \eqref{set_distances}. Then the map $\te\mapsto d(\nu_\te,\nu_{\te_0})$
is continuous on $\tte$.
\end{proposition}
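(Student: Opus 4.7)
My plan is to combine two triangle inequalities with the ingredients of Sections 4.1 and 4.2. First, I would use the triangle inequality for $d$ to observe that for any $\te,\te'\in\tte$,
$$|d(\nu_\te,\nu_{\te_0}) - d(\nu_{\te'},\nu_{\te_0})| \leq d(\nu_\te,\nu_{\te'}) \leq c\,\cw_p(\nu_\te,\nu_{\te'}),$$
where the second inequality uses $d\in\distdomi$ with associated constant $c>0$. This reduces the continuity of $\te\mapsto d(\nu_\te,\nu_{\te_0})$ to the $\cw_p$-continuity of the map $\te\mapsto\nu_\te$.

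Next, for any $t>0$, I would apply a second triangle inequality:
$$\cw_p(\nu_\te,\nu_{\te'}) \leq \cw_p(\nu_\te,\cl(Y_t^\te)) + \cw_p(\cl(Y_t^\te),\cl(Y_t^{\te'})) + \cw_p(\cl(Y_t^{\te'}),\nu_{\te'}),$$
and control the three pieces separately. The two outer terms are bounded, by Proposition~\ref{prop:conv-eq-TV}(ii) applied with $d=\cw_p$ (which trivially belongs to $\distdomi$), by $C\,t^{-\al_H/(4p)}$ uniformly in $\te\in\tte$. Given $\ep>0$, I would fix $t=t^*$ large enough so that each of these two terms is at most $\ep/(3c)$.

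The middle term is where the $\te$-regularity enters. The pair $(Y_{t^*}^\te,Y_{t^*}^{\te'})$ built from a single realization of the fBm provides a coupling of $\cl(Y_{t^*}^\te)$ and $\cl(Y_{t^*}^{\te'})$, so Proposition~\ref{prop:bnd-l2norm-deviation-Y-te} applied with $T=t^*$ yields
$$\cw_p(\cl(Y_{t^*}^\te),\cl(Y_{t^*}^{\te'})) \leq \ES\big[|Y_{t^*}^\te - Y_{t^*}^{\te'}|^p\big]^{1/p} \leq C_{t^*,p}|\te-\te'|,$$
which can be made smaller than $\ep/(3c)$ by taking $|\te-\te'|$ sufficiently small. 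Summing the three estimates then gives $d(\nu_\te,\nu_{\te'})\leq\ep$ for $\te'$ close enough to $\te$, establishing continuity.

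I do not expect any step to be a serious obstacle; the argument is a clean assembly of previously established estimates through two successive triangle inequalities. The only subtle requirement is the uniformity in $\te\in\tte$ of the $\cw_p$-convergence rate in Proposition~\ref{prop:conv-eq-TV}(ii), but this uniformity is already built into that statement via the uniform moment bound \eqref{eq:controleuniverselmoment2} and the $\te$-uniform total variation rate inherited from \cite{Ha}.
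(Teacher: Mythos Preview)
Your proposal is correct and follows essentially the same route as the paper: reduce to $\cw_p$ via the domination in $\distdomi$, then split $\cw_p(\nu_\te,\nu_{\te'})$ through the laws $\cl(Y_t^\te)$ and $\cl(Y_t^{\te'})$, controlling the outer terms by Proposition~\ref{prop:conv-eq-TV}(ii) and the middle term by Proposition~\ref{prop:bnd-l2norm-deviation-Y-te}. The paper's write-up is slightly more compressed (it bounds the two outer terms together as $2\sup_{\te\in\tte}\cw_p(\cl(Y_t^\te),\nu_\te)$), but the argument is the same.
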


\begin{proof} 
Owing to the very definition \eqref{set_distances} of $\distdomi$,
it is enough to prove the result for $d={\cal W}_p$ and for an arbitrary $p\ge1$. 
Next we apply the triangle inequality and the fact that $\cw_{p}$ is defined in~\eqref{eq:defwp} by an infimum over all couplings. This yields the following inequality, valid for any $t\ge0$:
\begin{equation}\label{c4}
{\cal W}_p(\nu_{\te_1},\nu_{\te_2})\le 2\sup_{\te\in\tte}{\cal W}_p({\cal L}(Y_t^\te),\nu_\te)
+\|Y^{\te_1}_t-Y^{\te_2}_t\|_{L^{p}(\oom)}.
\end{equation}
We now bound the two terms in the right hand side of \eqref{c4}. In order to handle the term ${\cal W}_p({\cal L}(Y_t^\te),\nu_\te)$, we consider a small parameter $\varepsilon>0$. By  Proposition  \ref{prop:conv-eq-TV}$(ii)$, there exists $t_0$ large enough such that 
\begin{equation}\label{c5}
 2\sup_{\te\in\tte}{\cal W}_p({\cal L}(Y_{t_{0}}^\te),\nu_\te)\le \frac{\varepsilon}{2}.
\end{equation}
We will fix this value of $t_{0}$ in the right hand side of \eqref{c4}. Then the difference $Y^{\te_1}_{t_{0}}-Y^{\te_2}_{t_{0}}$ is handled thanks to Proposition \ref{prop:bnd-l2norm-deviation-Y-te}. Namely consider $\delta>0$ such that (with the notations of Proposition \ref{prop:bnd-l2norm-deviation-Y-te}) we have $C_{t_0}\delta\le \frac{\varepsilon}{2}$.
We get that for all $(\te_1,\te_2)\in\tte^2$ such that $|\te_{1}-\te_{2}|\le\delta$, we have
\begin{equation}\label{c6}
\|Y^{\te_1}_t-Y^{\te_2}_t\|_{L^{p}(\oom)}
\le \frac{\varepsilon}{2}
\end{equation}
We conclude by plugging \eqref{c5} and \eqref{c6} into \eqref{c4}. This yields
\begin{equation}
{\cal W}_p(\nu_{\te_1},\nu_{\te_2})\le \varepsilon,
\end{equation}
for all $(\te_1,\te_2)\in\tte^2$ such that $|\te_{1}-\te_{2}|\le\delta$.
 The  continuity of $\te\mapsto {\cal W}_p(\nu_\te,\nu_{\te_0})$  on $\tte$ follows.
 \end{proof}

\subsection{Further controls  under $\HUNS$}
Up to now we have derived properties of the system~\eqref{eq:sdetheta} under the weak coercive assumption $\HUNW$.
In this section, we focus on possible additional bounds one can obtain under the stronger hypothesis  $\HUNS$. We will first see how $\HUNS$ guarantees a uniform control on the distance between the Euler scheme and the SDE, for a general decreasing sequence of time steps. Then we will show that $\HUNS$ ensures a some additional uniform continuity in $\te$ for the occupation measures of $Y^{\te}$.

We consider here Euler type approximations in continuous time, with time steps $\ga_{n}$ satisfying~\eqref{eq:condgamma}. In order to define this Euler approximation $(Z_t^{\te})_{t\ge0}$, we  set $s_0=0$ and $s_n=\sum_{i=1}^n\gamma_i$ for all $n\ge1$. Then for any $n\ge0$, the process $(Z^\te_t)_{t\ge0}$ is given recursively by
\begin{equation}\label{eq:sch}
Z^\te_{s_n+t}=Z^\te_{s_n}+t b_{\te}(Z^\te_{s_n})+\sigma(B_{s_n+t}-B_{s_n}),\quad t\in[0,s_{n+1}-s_n].
\end{equation}
Notice that the fractional Brownian motion $B$ in \eqref{eq:sch} is the same as the fBm driving equation~\eqref{eq:sdetheta}. The control we get on $Z^{\te}$ is summarized in the following proposition.
 
\begin{proposition}\label{prop:rapdisccont}
Let $Y^{\te}$ be the solution of equation \eqref{eq:sdetheta}, and consider the continuous-time Euler scheme $(Z^\te_t)_{t\ge0}$ with a time steps sequence $(\gamma_n)_{n\ge1}$ defined by \eqref{eq:sch}. We assume that
$\HUNS$ holds. Then the following assertions hold true.

\noindent (i) For any $p\ge 2$, there exist some positive constants $\rho$ and $C$ such that for any $n\ge1$ we have
\begin{equation}\label{eq:dif-Y-te-euler-recursive}
|Y_{s_n}^\te-Z^\te_{s_n}|^p\le e^{-\rho s_n}|Y^{\te}_{0}-Z^\te_{0}|^p+C\sum_{k=0}^{n-1}\phi_{k,p}(Z^\te_{s_k})e^{-\rho(s_n-s_{k+1})} ,
\end{equation}
where the function $\phi_{k,p}$ is defined, for any $k\ge0$, by
\begin{equation}\label{eq:def-phi-k-p}
\phi_{k,p}(z)= \gamma_{k+1}^{p+1} |b_\te(z)|^p+\int_0^{\gamma_{k+1}} |B_{s_k+t}-B_{s_k}|^pdt.
\end{equation}
\noindent (ii)  
Assume in addition that $\gamma_n\rightarrow0$ as $n\rightarrow+\infty$. Then for any $p\ge2$, there exists $n_0\in\mathbb{N}$ and some positive constants $\rho$ and $C$ such that for any $n\ge n_0$ we have
\begin{equation}\label{eq:dif-Y-te-euler-recursive-2}
|Y_{s_n}^\te-Z^\te_{s_n}|^p\le e^{-\rho(s_n-s_{n_0})}|Y^{\te}_{s_{n_0}}-Z^\te_{s_{n_0}}|^p+C\sum_{k=n_0}^{n-1}\phi_{k,p}(Y_{s_k})e^{-\rho(s_n-s_{k+1})}.
\end{equation}
\end{proposition}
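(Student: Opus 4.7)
The plan is to work on a single interval $[s_n, s_{n+1}]$ at a time, exploit the fact that $Y^\te$ and $Z^\te$ are driven by the same fBm $B$ (so that the stochastic part cancels in the difference), and then iterate. Set $D_t := Y^\te_t - Z^\te_t$. From \eqref{eq:sdetheta} and \eqref{eq:sch}, on $[s_n, s_{n+1}]$ we have $d Z^\te_t = b_\te(Z^\te_{s_n})\, dt + \sigma\, dB_t$, so the noise cancels and
\begin{equation*}
\frac{d}{dt} D_t = b_\te(Y^\te_t) - b_\te(Z^\te_{s_n})
= \bigl( b_\te(Y^\te_t) - b_\te(Z^\te_t) \bigr) + \bigl( b_\te(Z^\te_t) - b_\te(Z^\te_{s_n}) \bigr).
\end{equation*}

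Next I would establish the one-step estimate. Using the chain rule for $|D_t|^p$ (for $p\ge 2$), pairing with $D_t$, and invoking $\HUNS$ on the first bracket gives $\langle D_t, b_\te(Y^\te_t) - b_\te(Z^\te_t)\rangle \le -\alpha |D_t|^2$, while the Lipschitz bound from $\HUNW$ and the explicit formula $Z^\te_t - Z^\te_{s_n} = (t-s_n) b_\te(Z^\te_{s_n}) + \sigma (B_t - B_{s_n})$ control the second bracket by a constant times $\gamma_{n+1} |b_\te(Z^\te_{s_n})| + |B_t - B_{s_n}|$. Absorbing mixed terms via Young's inequality $a b^{p-1} \le \varepsilon b^p + C_\varepsilon a^p$, I would obtain, for some $\rho>0$ and $C>0$ depending only on $p$, $\alpha$, $L$, $\sigma$,
\begin{equation*}
\frac{d}{dt} |D_t|^p \le -\rho\, |D_t|^p + C\bigl( \gamma_{n+1}^p |b_\te(Z^\te_{s_n})|^p + |B_t - B_{s_n}|^p \bigr), \qquad t\in [s_n, s_{n+1}].
\end{equation*}
Integrating this linear differential inequality between $s_n$ and $s_{n+1}$ yields
\begin{equation*}
|D_{s_{n+1}}|^p \le e^{-\rho \gamma_{n+1}} |D_{s_n}|^p + C\, \phi_{n,p}(Z^\te_{s_n}),
\end{equation*}
where the two contributions of $\phi_{n,p}$ come respectively from $\int_{s_n}^{s_{n+1}} e^{-\rho(s_{n+1}-t)} \gamma_{n+1}^p |b_\te(Z^\te_{s_n})|^p\, dt \le \gamma_{n+1}^{p+1}|b_\te(Z^\te_{s_n})|^p$ and from bounding the surviving exponential by $1$ in the Brownian integral. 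A straightforward induction on $n$, using $s_n - s_{k+1} = \gamma_{k+2}+\cdots+\gamma_n$, then gives exactly \eqref{eq:dif-Y-te-euler-recursive}, proving (i).

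For (ii) the idea is to swap $Z^\te_{s_n}$ for $Y_{s_n}$ inside $\phi_{n,p}$ via the Lipschitz property of $b_\te$: $|b_\te(Z^\te_{s_n})|^p \le 2^{p-1} |b_\te(Y_{s_n})|^p + 2^{p-1} L^p |D_{s_n}|^p$. Plugging this into the one-step bound produces an additional term $C \gamma_{n+1}^{p+1} |D_{s_n}|^p$, so that
\begin{equation*}
|D_{s_{n+1}}|^p \le \bigl( e^{-\rho \gamma_{n+1}} + C \gamma_{n+1}^{p+1}\bigr) |D_{s_n}|^p + C\, \phi_{n,p}(Y_{s_n}).
\end{equation*}
Since $\gamma_n \to 0$, a Taylor expansion $e^{-\rho \gamma} + C\gamma^{p+1} = 1 - \rho \gamma + O(\gamma^2) \le e^{-\rho'\gamma}$ with $\rho' = \rho/2$ is valid for $\gamma$ small enough. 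Choosing $n_0$ such that this holds for all $n\ge n_0$, iterating from $n_0$ to $n$ yields \eqref{eq:dif-Y-te-euler-recursive-2}.

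The only genuinely delicate point is the derivation of the contractive differential inequality for $|D_t|^p$, where one must carefully balance the quadratic gain $-\alpha |D_t|^2$ against the cross term $|D_t|^{p-1}(\gamma_{n+1}|b_\te(Z^\te_{s_n})| + |B_t - B_{s_n}|)$ so that the exponent $\rho$ stays strictly positive; everything else is an elementary Gronwall iteration together with, for (ii), a smallness argument on the step sizes.
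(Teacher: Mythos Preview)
Your proposal is correct and follows essentially the same approach as the paper: cancel the noise, split the drift difference into a contractive piece handled by $\HUNS$ and a local error controlled via the Lipschitz bound and the explicit Euler increment, then use Young's inequality to obtain a linear differential inequality for $|D_t|^p$, integrate on $[s_n,s_{n+1}]$, and iterate. The paper carries the factor $(t-s_n)^p$ rather than $\gamma_{n+1}^p$ before integrating and makes the constant $\rho=p\alpha/4$ explicit, but your derivation of the one-step bound, the induction for (i), and the Lipschitz swap plus smallness argument for (ii) match the paper's proof almost line by line.
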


\begin{proof}
Let $n\ge 0$ and consider the dynamics of $Y^{\te}-Z^{\te}$ on $[s_{n},s_{n+1})$. That is, set $\varepsilon_t:=Y^{\te}_{s_n+t}-Z^\te_{s_n+t}$ for $t\in[0,\ga_{n+1})$. Then $\ep_{t}$ verifies the relation 
\begin{equation}\label{d1}
\varepsilon_t
=
Y^{\te}_{s_n}-Z^\te_{s_n}+\int_{s_n}^{s_n+t}
\lp b_\te(Y^{\te}_{s})-b_\te(Z^\te_{s_n}) \rp ds.
\end{equation}
Starting from this equation, we divide the proof in several steps.

\noindent
\textit{Step 1: Contracting bound for $\ep_{t}$.}
Consider a parameter $\eta>0$. We wish to use the coercivity assumption $\HUNS (i)$ in order to get an upper-bound on the following derivative:
\begin{equation}\label{d2}
\left(e^{\eta t}|\varepsilon_t|^p\right)'
=e^{\eta t}\left(p|\varepsilon_t|^{p-2}\langle\varepsilon_t,\varepsilon'_t\rangle+\eta |\varepsilon_t|^p\right) .
\end{equation}
To this aim, observe that thanks to \eqref{d1} the quantity  $\langle\varepsilon_t,\varepsilon'_t\rangle$ can be expressed as:
\begin{eqnarray*}
\langle\varepsilon_t,\varepsilon'_t\rangle
&=&
\langle Y^{\te}_{s_n+t}-Z^\te_{s_n+t}, \, b_\te(Y^{\te}_{s_n+t})-b_\te(Z^\te_{s_n})\rangle \\
&=&
\langle Y^{\te}_{s_n+t}-Z^\te_{s_n+t}, \, b_\te(Y^{\te}_{s_n+t})-b_\te(Z^\te_{s_n+t})\rangle
+\langle Y^{\te}_{s_n+t}-Z^\te_{s_n+t}, \, b_\te(Z^\te_{s_n+t})-b_\te(Z^\te_{s_n})\rangle .
\end{eqnarray*}
Then we invoke  $\HUNS (i)$ and the elementary inequality $ab\le a^{2} + b^{2}$, valid for all $a,b\ge 0$. We obtain
\begin{eqnarray}\label{d21}
\langle\varepsilon_t,\varepsilon'_t\rangle
&\leqslant& 
-\alpha|Y^{\te}_{s_n+t}-Z^\te_{s_n+t}|^2+\frac{\alpha}{2}|Y^{\te}_{s_n+t}-Z^\te_{s_n+t}|^2+\frac{2}{\alpha}|b_\te(Z^\te_{s_n+t})-b_\te(Z^\te_{s_n})|^2 \notag\\
&\leqslant& 
-\frac{\alpha}{2}|\ep_{t}|^2+\frac{2L^2}{\alpha}|Z^\te_{s_n+t}-Z^\te_{s_n}|^2,
\end{eqnarray}
where the second inequality is due to relation \eqref{eq:inward-weak} and the definition \eqref{d1} of $\ep_{t}$. We now plug relation \eqref{eq:sch} into this inequality in order to get
\begin{equation*}
\langle\varepsilon_t,\varepsilon'_t\rangle
\leqslant
-\frac{\alpha}{2}\left|\ep_{t}\right|^2
+\frac{2L^2}{\alpha} \left|tb(Z^\te_{s_n})+\sigma(B_{s_n+t}-B_{s_n})\right|^2,
\end{equation*}
from which we easily end up with
\begin{equation}\label{ineq:4_proof_error_nu_gamma_nu}
p|\varepsilon_t|^{p-2}\langle\varepsilon_t,\varepsilon'_t\rangle
\leqslant 
- \frac{p\alpha}{2}|\ep_{t}|^p
+ \frac{4pL^2}{\alpha}|\varepsilon_t|^{p-2}
\lp t^2|b_\te(Z^\te_{s_n})|^2+|\sigma|^2|B_{s_n+t}-B_{s_n}|^2 \rp .
\end{equation}
Eventually we apply Young's inequality with parameters $\bar{p}=p/(p-2)$ and $\bar{q}=p/2$ and some appropriate weights to \eqref{ineq:4_proof_error_nu_gamma_nu}. This yields the existence of a constant $C=C_{p,L}$ such that
\begin{equation}\label{d3}
p|\varepsilon_t|^{p-2}\langle\varepsilon_t,\varepsilon'_t\rangle
\leqslant 
-\frac{p\alpha}{4}|\ep_{t}|^p+C\left(t^p|b_\te(Z^\te_{s_n})|^p+|B_{s_n+t}-B_{s_n}|^p\right).
\end{equation}
We are now ready to give some information about expressions of the form $e^{\eta t}|\ep_{t}|^{p}$. Namely we set $\eta=p\alpha/4$, then we apply identity \eqref{d2} and inequality  \eqref{d3}. This easily yields 
\begin{equation}\label{d4}
\left(e^{\eta t}|\varepsilon_t|^p\right)'\leqslant e^{\eta t}~C(t^p|b_\te(Z^\te_{s_n})|^p+|B_{s_n+t}-B_{s_n}|^p).
\end{equation}

\noindent
\textit{Step 2: Inductive procedure.}
Let us integrate \eqref{d4} on the interval $[0,\gamma_{n+1}]$, where we recall that $\gamma_{n+1}=s_{n+1}-s_n$. With the definition \eqref{d1} in mind, this gives
\begin{equation*}
|Y^{\te}_{s_{n+1}}-Z^\te_{s_{n+1}}|^p
\leqslant 
e^{-\eta\gamma_{n+1}}|Y^{\te}_{s_n}-Z^\te_{s_n}|^p+C\int_{0}^{\gamma_{n+1}} e^{-\eta(\gamma_{n+1}-t)}(t^p|b_\te(Z^\te_{s_n})|^p+|B_{s_n+t}-B_{s_n}|^p)dt, 
\end{equation*}
which yields
\begin{equation}\label{kljlkdjdq}
|Y^{\te}_{s_{n+1}}-Z^\te_{s_{n+1}}|^p\leqslant e^{-\eta\gamma_{n+1}}|Y^{\te}_{s_n}-Z^\te_{s_n}|^p+C\left(\gamma_{n+1}^{p+1} |b_\te(Z^\te_{s_n})|^p+\int_0^{\gamma_{n+1}} |B_{s_n+t}-B_{s_n}|^p)dt\right).
\end{equation}
Thus, setting $\phi_{n,p}(x)= \left(\gamma_{n+1}^{p+1} |b_\te(x)|^{p}+\int_0^{\gamma_{n+1}} |B_{s_n+t}-B_{s_n}|^p)dt\right)$, an elementary induction procedure yields the following relation for every $n\ge1$:
$$
|Y^{\te}_{s_n}-Z^\te_{s_n}|^p\le |Y^{\te}_{0}-Z^\te_{0}|^p e^{-\eta s_n}+\sum_{k=0}^{n-1}\phi_{k,p}(Z^\te_{s_k})e^{-\eta(s_n-s_{k+1})}.
$$
This proves our claim \eqref{eq:dif-Y-te-euler-recursive}.

\noindent
\textit{Step 3: Proof of \eqref{eq:dif-Y-te-euler-recursive-2}.}
In order to obtain our second statement \eqref{eq:dif-Y-te-euler-recursive-2}, one needs to go back to inequality \eqref{kljlkdjdq}. Then observe that hypothesis \eqref{eq:inward-weak} yields
$$ 
|b(Z^\te_{s_n})|^p\le  2^{p-1}\left(|b(Y_{s_n}^{\te})|^p+ L^p |Y_{s_n}^\te-Z^\te_{s_n}|^p\right),
$$
which leads for any $p$ to the existence of a constant $C$ such that
\begin{equation}\label{d5}
|Y^{\te}_{s_{n+1}}-Z^\te_{s_{n+1}}|^p\leqslant \left(e^{-\eta\gamma_{n+1}}+C\ga_{n+1}^{p+1}\right)|Y^{\te}_{s_n}-Z^\te_{s_n}|^p+C\phi_{n,p}(Y_{s_n}).
\end{equation}
Since $\lim_{n\to\infty}\ga_n =0$, one checks easily that there exists a $n_0$ such that for any $n\ge n_0$ we have the following inequality: 
$$
e^{-\eta\gamma_{n+1}}+C\ga_{n+1}^{p+1}\le e^{-\frac{\eta}{2}\gamma_{n+1}}.
$$
Plugging this information into \eqref{d5}, we end up with
\begin{equation*}
|Y^{\te}_{s_{n+1}}-Z^\te_{s_{n+1}}|^p
\leqslant 
e^{-\frac{\eta}{2}\gamma_{n+1}} |Y^{\te}_{s_n}-Z^\te_{s_n}|^p+C\phi_{k,p}(Y_{s_n}).
\end{equation*}
Our assertion \eqref{eq:dif-Y-te-euler-recursive-2} then follows by an induction procedure exactly as for Step 2.
\end{proof}

We now give a continuity results (with respect to the parameter $\te$) for some occupation measures related to our processes of interest. The proofs are postponed to Appendix \ref{append:B}.

\begin{proposition}\label{prop:contracttt} 
As in Proposition \ref{prop:continuity-Q}, let $Y^{\te}$ be the solution of equation \eqref{eq:sdetheta} and consider the Euler scheme $Z^{\te,\ga}$ defined by \eqref{eq:Euler_scheme}. Also consider $p>0$ and $d\in\distdomi$. We assume that $\HUNS$ holds true. Then,

\noindent
(i) The occupation measures of the process $Y^{\te}$ are Lipschitz with respect to $\te$, that is there exists a positive random variable $C_p$ such that for all $t\geq1$:
\begin{equation}\label{eq:Lip_occ_meas_continuous}
d\left(\frac{1}{t}\int_0^t \delta_{Y_s^{\te_1}} ds,\frac{1}{t}\int_0^t \delta_{Y_s^{\te_2} }ds\right)\leq C_p|\te_1-\te_2|.
\end{equation}

\noindent
(ii) The occupation measures of the Euler approximation $Z^{\te,\ga}$ are also Lipschitz with respect to $\te$. Namely there exists $\gamma_0>0$ such that: for any $\gamma\in(0,\gamma_0]$, there exists a positive random variable $C_p(\ga)$ such that for all $N\geq1$
\begin{equation}\label{eq:Lip_occ_meas_discrete}
d\left(\frac{1}{N}\sum_{k=0}^{N-1} \delta_{Z_{k\gamma}^{\te_1,\ga}} ,\frac{1}{N}\sum_{k=0}^{N-1} \delta_{Z_{k\gamma}^{\te_2,\ga}} \right)\leq C_p(\ga)|\te_1-\te_2|.
\end{equation}
\end{proposition}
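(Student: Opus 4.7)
Since any $d\in\distdomi$ satisfies $d\leq c\,\mathcal{W}_p$ for some $c>0$, it suffices to establish both inequalities with $d$ replaced by $\mathcal{W}_p$, and (by Jensen applied once the empirical measures have finite $2$-nd moments, which holds a.s.\ thanks to \eqref{eq:contmomentocccont} and \eqref{eq:contmomentocc}) I shall treat the case $p\ge 2$ only. The core idea is a \emph{synchronous} coupling. In the continuous setting (i), letting $U$ be uniform on $[0,t]$ independent of the noise, the pair $(Y_U^{\te_1},Y_U^{\te_2})$ couples the two occupation measures, so
\begin{equation*}
\mathcal{W}_p\!\left(\frac{1}{t}\iot\delta_{Y_s^{\te_1}}ds,\,\frac{1}{t}\iot\delta_{Y_s^{\te_2}}ds\right)^p \leq \frac{1}{t}\iot |Y_s^{\te_1}-Y_s^{\te_2}|^p\,ds,
\end{equation*}
and analogously in (ii) one bounds $\mathcal{W}_p^{\,p}$ by the discrete average $\frac{1}{N}\sum_{k=0}^{N-1}|Z_{k\gamma}^{\te_1,\ga}-Z_{k\gamma}^{\te_2,\ga}|^p$ via the uniform law on the grid.

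Under $\HUNS$ (i.e.\ $\beta=0$) the computation already carried out in Proposition~\ref{prop:bnd-l2norm-deviation-Y-te}, combined with Young's inequality, upgrades to the genuinely contractive differential bound
\begin{equation*}
\frac{d}{dt}\big|Y_t^{\te_1}-Y_t^{\te_2}\big|^2 \leq -\alpha\big|Y_t^{\te_1}-Y_t^{\te_2}\big|^2+\frac{C^2}{\alpha}|\te_1-\te_2|^2\big(1+|Y_t^{\te_2}|^{r}\big)^2.
\end{equation*}
Since $Y_0^{\te_1}=Y_0^{\te_2}=y_0$, variation of constants followed by Jensen's inequality with respect to the exponential density $s\mapsto\alpha e^{-\alpha(t-s)}\mathbf{1}_{[0,t]}(s)$ (to pass from the power $2$ to a general power $p\ge 2$) yields
\begin{equation*}
\big|Y_t^{\te_1}-Y_t^{\te_2}\big|^p\leq C_p|\te_1-\te_2|^p\iot e^{-\alpha(t-s)}\big(1+|Y_s^{\te_2}|^{r}\big)^p\,ds.
\end{equation*}
Averaging in time and applying Fubini bounds the right-hand side by a constant times $\frac{1}{t}\iot (1+|Y_u^{\te_2}|^{r})^p\,du$, which is a.s.\ finite uniformly in $t\ge 1$ by the ergodic control \eqref{eq:contmomentocccont} of Proposition~\ref{prop:ergodicSDE}. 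This delivers \eqref{eq:Lip_occ_meas_continuous}.

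For (ii) the same philosophy applies to $\varepsilon_k:=Z_{k\gamma}^{\te_1,\ga}-Z_{k\gamma}^{\te_2,\ga}$. Expanding $|\varepsilon_{k+1}|^2$ from \eqref{eq:Euler_scheme}, decomposing $b_{\te_1}(Z_{k\gamma}^{\te_1,\ga})-b_{\te_2}(Z_{k\gamma}^{\te_2,\ga})$ into a contractive piece (handled by $\HUNS$) and a $\te$-perturbation piece (handled by \eqref{eq:hyp-bnd-deriv-b}), and absorbing the cross term via Young, one arrives at
\begin{equation*}
|\varepsilon_{k+1}|^2\leq \big(1-\gamma\alpha+2\gamma^2L^2\big)|\varepsilon_k|^2+C\gamma|\te_1-\te_2|^2\big(1+|Z_{k\gamma}^{\te_2,\ga}|^{r}\big)^2.
\end{equation*}
The main technical obstacle is precisely here: to convert this into an exponential contraction factor of the form $e^{-\gamma\alpha/2}$, one must absorb the $\gamma^2L^2$ term into $\gamma\alpha$, which forces the restriction $\gamma\in(0,\gamma_0]$ for some $\gamma_0>0$ depending on $\alpha$ and $L$. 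Once this is secured, a discrete Grönwall iteration starting from $\varepsilon_0=0$ and the geometric summation $\sum_{m\ge 0} e^{-m\gamma\alpha/2}\le C/\gamma$ lead, after rearranging the double sum by Fubini, to
\begin{equation*}
\frac{1}{N}\sum_{k=0}^{N-1}|\varepsilon_k|^p \leq C_p|\te_1-\te_2|^p\cdot\frac{1}{N}\sum_{j=0}^{N-1}\big(1+|Z_{j\gamma}^{\te_2,\ga}|^{r}\big)^p,
\end{equation*}
and the discrete ergodic estimate \eqref{eq:contmomentocc} of Proposition~\ref{prop:ergodicEuler} shows that the last average is a.s.\ bounded uniformly in $N$. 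The $\gamma$-dependence of the resulting random constant $C_p(\gamma)$ comes from the geometric series $\sum e^{-m\gamma\alpha/2}$ and from the $\gamma$-dependence of the ergodic bound itself.
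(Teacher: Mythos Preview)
Your proof is correct and follows essentially the same route as the paper's: reduce to $\mathcal{W}_p$, use the synchronous coupling (the paper frames this as Remark~\ref{rem:strategy_using_Dp}), derive the contractive differential inequality for $|Y_t^{\te_1}-Y_t^{\te_2}|^2$ under $\HUNS$ via the same decomposition of $b_{\te_1}(Y_s^{\te_1})-b_{\te_2}(Y_s^{\te_2})$ and Young's inequality, integrate against the exponential weight, then average in $t$ and apply Fubini together with the ergodic moment bound \eqref{eq:contmomentocccont}. Your treatment of~(ii) is in fact more detailed than the paper's, which simply refers the reader to the discrete computations of Proposition~\ref{prop:control_moment_Z}; your recursion for $|\varepsilon_{k+1}|^2$ and the identification of $\gamma_0$ from the need to absorb $\gamma^2L^2$ into $\gamma\alpha$ are exactly the right ingredients.
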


\begin{remark}\label{rem:strategy_using_Dp} In the sequel we will analyze several quantities like \eqref{eq:Lip_occ_meas_discrete}, where we compare two discrete random measures on $\R^d$ $\nu_1=\frac{1}{N}\sum_{i=1}^N\delta_{X_i}$ and $\nu_2=\frac{1}{N}\sum_{i=1}^N\delta_{Y_i}$ with $X_i=X_i(\omega)$ and $Y_i=Y_i(\omega)$. In this context we will always upper bound quantities of the form $d(\nu_1,\nu_2)$ for a distance $d$ in $\distdomi$. To this aim, resorting a trivial coupling between $\nu_1$ and $\nu_2$, it is enough to prove an almost sure upper bound on 
$$\frac{1}{N}\sum_{i=1}^N|X_i-Y_i|^p.$$
We will adopt this strategy throughout the paper, the typical outcome being an a.s. bound on $d(\nu_1,\nu_2)$. Straightforward extensions to a continuous time setting allow to handle quantities of the form \eqref{eq:Lip_occ_meas_continuous}.
\end{remark}

\section{Proof of the consistency theorems}\label{sec:consistencyproof}
The aim of this section is to achieve the proof of  Theorems \ref{thm:cvgce-estimator-dicrete-theta}, \ref{thm:cvgce-esti1} and \ref{thm:cvgce-esti2}. We first establish a general asymptotic result for a family of contrasts in Sections~\ref{sec:unif-cvgce-contrast} and~\ref{sec:general-convergence}. Then we will combine this general proposition with our preliminary results of Section~\ref{sec:preliminary} in order to prove our main claims.

\subsection{Uniform convergence of the contrast}\label{sec:unif-cvgce-contrast}
In this section we state some uniform convergence results for the \textit{contrast}, i.e for the function involved in the definition of estimators such as~\eqref{eq:firstdefesti}. We should notice at this point that our uniform convergence results hold only under the assumption $\HUNS$. 
In case of a constant time step Euler scheme, we get the following result.

\begin{proposition} \label{prop:unifconst}
We consider the same setting as in Proposition \ref{prop:contracttt}. In particular, we assume that $\HUNS$ holds true for the coefficients of equation \eqref{eq:sdetheta}. Then the following assertions hold true.

\noindent
$(i)$ 
The invariant measure $\nu^{\ga}$ of the Euler scheme converges uniformly to the invariant measure of $Y^{\te}$. Namely
\begin{equation*}
\limsup_{\ga\rightarrow0}\ga^{-H} \sup_{\te\in\tte} d(\nu_\te,\nu_\te^\gamma)<+\infty.
\end{equation*}
In particular, we have
\begin{equation*}
\lim_{\ga\rightarrow0} \, \sup_{\te\in\tte} d(\nu_\te,\nu_\te^\gamma) = 0.
\end{equation*}

\noindent
$(ii)$ 
The occupation measure of the Euler scheme converges to the invariant measure $\nu_\te^\gamma$ as the number of steps goes to $\infty$, that is:
\begin{equation*}
\lim_{N\rightarrow+\infty} \, \sup_{\te\in\tte} 
d\left(\frac{1}{N}\sum_{k=0}^{N-1}\delta_{Z_{s_{k}}^{\te,\ga}}, \nu_\te^\gamma\right)
=0.
\end{equation*}

\noindent
$(iii)$ We have
$$\lim_{\gamma\rightarrow 0}\lim_{N,n\rightarrow+\infty}
\sup\limits_{\te\in\tte}\left|d\left(\frac{1}{n}\sum_{k=0}^{n-1} \delta_{Y_{t_k}},\frac{1}{N}\sum_{k=0}^{N-1} \delta_{Z_{s_k}^{\te,\gamma}} \right)
- 
d(\nu_{\te_0},\nu_\te)\right|=0.
$$
\end{proposition}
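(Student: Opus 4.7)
I would prove the three items in succession, exploiting throughout the fact that under $\HUNS$ the contraction constant $\rho$ from Proposition~\ref{prop:rapdisccont}, the Lipschitz constant $L$ of $b_\te$, and the moment bounds from Proposition~\ref{prop:conv-eq-TV} and (the proof of) Proposition~\ref{prop:ergodicEuler} can all be chosen independently of $\te$.

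For item $(i)$ I would couple a stationary solution $\bar Y^\te$ of \eqref{eq:sdetheta} with a constant-step Euler scheme $Z^{\te,\ga}$ driven by the same fBm and started at $Z_0^{\te,\ga}=\bar Y_0^\te$, so that the initial-condition term in Proposition~\ref{prop:rapdisccont}$(i)$ vanishes. Taking expectation in that estimate and using the uniform-in-$\te$ moment bounds $\sup_\te\sup_k\ES|Z_{k\ga}^{\te,\ga}|^p<\infty$ recalled in the proof of Proposition~\ref{prop:ergodicEuler} yields $\ES[\phi_{k,p}(Z_{k\ga}^{\te,\ga})]\le C(\ga^{p+1}+\ga^{pH+1})$ with $C$ independent of $\te$ and $k$. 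The exponentially weighted sum $\sum_k e^{-\rho(s_n-s_{k+1})}$ is of order $1/\ga$, so that in the limit $n\to\infty$ one obtains $\ES|\bar Y_{n\ga}^\te-Z_{n\ga}^{\te,\ga}|^p\le C\ga^{pH}$ uniformly in $\te$. Since $\bar Y_{n\ga}^\te\sim\nu_\te$ while $\mathcal{L}(Z_{n\ga}^{\te,\ga})\Rightarrow\nu_\te^\ga$ by Proposition~\ref{prop:ergodicEuler}, the lower semicontinuity of $\cw_p$ and the domination $d\le c\cw_p$ from \eqref{set_distances} then give $\sup_\te d(\nu_\te,\nu_\te^\ga)\le C\ga^H$, which is the required statement.

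For item $(ii)$ I would first upgrade the random Lipschitz estimate of Proposition~\ref{prop:contracttt}$(ii)$ to a deterministic Lipschitz bound for the map $\te\mapsto\nu_\te^\ga$: by letting $N\to\infty$ on both sides of that estimate and noting that the left-hand side $d(\nu_{\te_1}^\ga,\nu_{\te_2}^\ga)$ does not depend on $\omega$, any sample point of the full-measure event on which $C_p(\ga)$ is finite supplies a valid deterministic bound. Fixing then $\ga\in(0,\ga_0]$ and any $\ep>0$, compactness of $\tte$ ($\HZERO$) yields a finite $\ep$-cover $(\te_i)_{i\le M_\ep}$ of $\tte$, and for almost every $\omega$ a triangle inequality gives
\begin{equation*}
\sup_\te d\Bigl(\tfrac1N\textstyle\sum_k\delta_{Z_{s_k}^{\te,\ga}},\nu_\te^\ga\Bigr)\le \max_{i\le M_\ep}d\Bigl(\tfrac1N\textstyle\sum_k\delta_{Z_{s_k}^{\te_i,\ga}},\nu_{\te_i}^\ga\Bigr)+2\,C_p(\ga)\,\ep.
\end{equation*}
The first term vanishes as $N\to\infty$ by Proposition~\ref{prop:ergodicEuler}$(i)$ applied to each of the finitely many $\te_i$, and letting $\ep\to0$ concludes the argument.

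For item $(iii)$ I would use the triangle inequality
\begin{equation*}
\left|d\Bigl(\tfrac1n\textstyle\sum_k\delta_{Y_{t_k}},\tfrac1N\textstyle\sum_k\delta_{Z_{s_k}^{\te,\ga}}\Bigr)-d(\nu_{\te_0},\nu_\te)\right|\le d\Bigl(\tfrac1n\textstyle\sum_k\delta_{Y_{t_k}},\nu_{\te_0}\Bigr)+d\Bigl(\tfrac1N\textstyle\sum_k\delta_{Z_{s_k}^{\te,\ga}},\nu_\te^\ga\Bigr)+d(\nu_\te^\ga,\nu_\te),
\end{equation*}
take the supremum over $\te\in\tte$, and send first $N,n\to\infty$ and then $\ga\to0$: the three terms on the right then vanish in that order, thanks to Proposition~\ref{prop:ergodicSDE}$(ii)$ at $\te=\te_0$, to item $(ii)$ just proved, and to item $(i)$ just proved. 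The main obstacle throughout is the genuine uniformity in $\te$: without $\HUNS$, the contraction and moment constants would depend on $\te$ and the strategy would collapse. The most delicate single step is the extraction of the deterministic Lipschitz estimate for $\te\mapsto\nu_\te^\ga$ from the a.s.\ one in Proposition~\ref{prop:contracttt}$(ii)$, which is what makes the covering argument of $(ii)$ yield a genuinely uniform conclusion.
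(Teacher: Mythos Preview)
Your arguments for $(ii)$ and $(iii)$ are essentially the paper's own: the paper also upgrades the pointwise convergence of Proposition~\ref{prop:ergodicEuler}$(i)$ to uniform convergence by combining it with the equicontinuity in $\te$ supplied by Proposition~\ref{prop:contracttt}$(ii)$ (your $\ep$-cover is just the explicit form of ``pointwise convergence $+$ equicontinuity $+$ compactness $\Rightarrow$ uniform convergence''), and deduces $(iii)$ from $(i)$, $(ii)$, and Proposition~\ref{prop:ergodicSDE}$(ii)$ via the same triangle inequality you write down.

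For $(i)$ your route differs slightly from the paper's and is a bit more economical, but it contains one mis-citation that must be fixed. The paper starts $Y^\te$ and $Z^{\te,\ga}$ from the same deterministic point and splits $d(\nu_\te,\nu_\te^\ga)$ into three pieces via $\cl(Y_{N\ga}^\te)$ and $\cl(Z_{N\ga}^{\te,\ga})$; it then handles $d(\cl(Y_{N\ga}^\te),\nu_\te)$ by Proposition~\ref{prop:conv-eq-TV} and $d(\cl(Z_{N\ga}^{\te,\ga}),\nu_\te^\ga)$ by coupling $Z^{\te,\ga}$ with a \emph{stationary} Euler scheme and using the $\HUNS$ contraction directly (the estimate $|Z_{k\ga}^{\te,\ga}-\bar Z_{k\ga}^{\te,\ga}|^2\le(1-2\ga\alpha+\ga^2L^2)^k|Z_0-\bar Z_0|^2$). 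Your idea of coupling the Euler scheme with the stationary $\bar Y^\te$ from the outset neatly eliminates the first of these three terms. However, when you pass to the limit you invoke ``$\cl(Z_{n\ga}^{\te,\ga})\Rightarrow\nu_\te^\ga$ by Proposition~\ref{prop:ergodicEuler}'': that proposition gives a.s.\ convergence of the \emph{occupation measure} $\tfrac1N\sum_k\delta_{Z_{k\ga}^{\te,\ga}}$, not of the marginal law $\cl(Z_{n\ga}^{\te,\ga})$, and weak convergence of the former does not imply the latter. To close the gap you still need exactly the stationary-Euler-scheme contraction estimate that the paper writes out; once that is supplied, your lower-semicontinuity step goes through and your proof of $(i)$ is complete.
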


\begin{proof} 
We prove the three items separately.

\noindent
\textit{Proof of (i).} For sake of simplicity, we only detail the proof for $p=2$. The extension to a general $p$ does not generate particular difficulties and can be done as in Proposition~\ref{prop:rapdisccont}.
We start from the following inequality:
\begin{equation}\label{ineq:1_proof_error_nu_gamma_nu}
d\left(\nu^\ga_{\te},\nu_\te\right)\leqslant 
d\left(\mathcal{L}\left(Z^{\te,\ga}_{N\gamma}\right),\mathcal{L}\left(Y^{\te}_{N\gamma}\right)\right)+d\left(\nu^\ga_{\te},\mathcal{L}\left(Z^{\te,\ga}_{N\gamma}\right)\right)+d\left(\mathcal{L}\left(Y^{\te}_{N\gamma}\right),\nu_\te\right).
\end{equation}
Let us consider the three terms of the right-hand side of \eqref{ineq:1_proof_error_nu_gamma_nu} successively. First, without loss of generality, we can assume that 
$Y_0^\te=Z_0^{\te,\ga}$. Furthermore, we have that $Z_{n\gamma}^{\te,\ga}=Z^\te_{n\gamma}$ for all $n\ge 1$, where $Z^\te$ is defined by \eqref{eq:sch}. Then, by Proposition \ref{prop:rapdisccont} $(i)$ applied with $\gamma_n=\gamma$,
we have
$$\ES[|Y_{n\gamma}^\te-Z_{n\gamma}^{\te,\ga}|^2]\le C\sum_{k=0}^{n-1}\ES[\phi_{k,2}(Z_{k\gamma}^{\te,\ga})]e^{-\rho\gamma(n-1-k)},
$$
where we recall that $\phi_{k,p}$ is defined by \eqref{eq:def-phi-k-p}.
Using that $b_\te$ is sublinear (uniformly in $\te$) and the fact that the increments of $B$ satisfy relation \eqref{eq:var-increments-fbm}, one obtains
$$\ES[|Y_{n\gamma}^\te-Z_{n\gamma}^{\te,\ga}|^2]
\le 
C\sum_{k=0}^{n-1} 
\left(\ga^3\left(1+\ES[|Z_{k\gamma}^{\te,\ga}|^2]\right)
+\ga^{2H+1}\right)e^{-\rho\gamma(n-1-k)}.
$$
It follows easily that
\begin{equation*}
\sup_{\te\in\tte,\ga\in(0,\ga_0]}\limsup_{n\rightarrow+\infty}
\ga^{-2H}\ES[|Y_{n\gamma}^\te-Z_{n\gamma}^{\te,\ga}|^2]
\le 
1+
\sup_{\te\in\tte,\ga\in(0,\ga_0]}\limsup_{n\rightarrow+\infty}\ES[|Z_{k\gamma}^{\te,\ga}|^2].
\end{equation*}
Moreover, owing to Proposition \ref{prop:control_moment_Z}, there exists $\ga_0$ such that 
\begin{equation*}
\limsup_{n\rightarrow+\infty}\ES[|Z_{k\gamma}^{\te,\ga}|^2]
<\infty.
\end{equation*}
Summarizing, we have obtained
\begin{equation*}
\sup_{\te\in\tte,\ga\in(0,\ga_0]}\limsup_{n\rightarrow+\infty}
\ga^{-2H}\ES[|Y_{n\gamma}^\te-Z_{n\gamma}^{\te,\ga}|^2]
<\infty.
\end{equation*}

We now consider the term $d(\nu^\ga_{\te},\mathcal{L}(Z^{\te,\ga}_{N\gamma}))$ in the right hand side of \eqref{ineq:1_proof_error_nu_gamma_nu}.  Using that $\nu^\ga_\te$ is invariant, we remark that
$$d\left(\nu^\ga_{\te},\mathcal{L}\left(Z^{\te,\ga}_{N\gamma}\right)\right)\le \ES[|Z^{\te,\ga}_{N\gamma}-\bar{Z}^{\te,\ga}_{N\gamma}|^2]^{\frac{1}{2}}$$
where $\bar{Z}^{\te,\ga}$ denotes a stationary Euler scheme built with the same noise process as for for $Z^{\te,\ga}$. Thus, thanks to the fact that 
\begin{equation*}
Z^{\te,\ga}_{k\gamma}-\bar{Z}^{\te,\ga}_{k\gamma}=Z^{\te,\ga}_{(k-1)\gamma}-\bar{Z}^{\te,\ga}_{(k-1)\gamma}+\ga\lp b(Z^{\te,\ga}_{(k-1)\gamma})-b(\bar{Z}^{\te,\ga}_{(k-1)\gamma})\rp
\end{equation*}
a straightforward induction under assumption $\HUNS$, similar to \eqref{d21}, leads to
\begin{equation}
\left|Z^{\te,\ga}_{k\gamma}-\bar{Z}^{\te,\ga}_{k\gamma}\right|^2\leqslant (1-2\gamma \alpha+\gamma^2 L^2)^{k} \left|Z^{\te,\ga}_{0}-\bar{Z}^{\te,\ga}_{0}\right|^2.
\end{equation}
We choose $\ga_0=\alpha/L^2$ in such a way that $2 \alpha-\gamma L^2\ge\alpha$ for any $\ga\in(0,\ga_0]$. In addition, recall that $Z^{\te,\ga}_{0}=z_0$. Then, by possibly picking a smaller value of $\ga_0$, we deduce from Proposition~\ref{prop:control_moment_Z} that
$$
\limsup_{N\rightarrow+\infty}\sup_{\te\in\tte}d\left(\nu^\ga_{\te},\mathcal{L}\left(Z^{\te,\ga}_{N\gamma}\right)\right)=0.
$$

Eventually, the last term in the right hand side of \eqref{ineq:1_proof_error_nu_gamma_nu} tends to $0$ uniformly in $\te$ as $N\rightarrow+\infty$ by Proposition \ref{prop:conv-eq-TV}. This concludes the proof of $(i)$.

\noindent
\textit{Proof of (ii).}
By Proposition \ref{prop:ergodicEuler}, the convergence of $d(\frac{1}{N}\sum_{k=0}^{N-1}\delta_{Z_{k\gamma}^{\te,\ga}}, \nu_\te^\gamma)$ to 0 is true for the simple convergence. In order to extend this result to a uniform convergence in $\te$, we use
 Proposition~\ref{prop:contracttt}$(ii)$ to obtain that the family $\{\te\mapsto d(\frac{1}{N}\sum_{k=0}^{N-1} \delta_{Z_{k\ga}^{\te,\ga}},\nu_\te^\ga); \,
 N\ge1,\te\in\tte\}$ is equicontinuous for a fixed $\ga\in(0,\ga_0]$. Actually, for some given $\te_1$ and $\te_2$,
 $$\left|d\left(\frac{1}{N}\sum_{k=0}^{N-1} \delta_{Z_{k\ga}^{\te_1,\ga}},\nu_{\te_1}^\ga\right)-d\left(\frac{1}{N}\sum_{k=0}^{N-1} \delta_{Z_{k\ga}^{\te_2,\ga}},\nu_{\te_2}^\ga\right)\right|\le d(\nu_{\te_1}^\ga,\nu_{\te_2}^\ga)+ d\left(\frac{1}{N}\sum_{k=0}^{N-1} \delta_{Z_{k\ga}^{\te_1,\ga}},\frac{1}{N}\sum_{k=0}^{N-1} \delta_{Z_{k\ga}^{\te_2,\ga}}\right).$$
 The second term goes to $0$ as $\te_1-\te_2\rightarrow0$ by Proposition \ref{prop:contracttt}$(ii)$. This is also the case for the first one by letting
 $N$ go to $\infty$ in Proposition \ref{prop:contracttt}$(ii)$ (for instance).\\
 
 \noindent
\textit{Proof of (iii).} 
This is a simple consequence of the two previous statements and of Proposition \ref{prop:ergodicSDE}$(ii)$.
 \end{proof}

 We now generalize Proposition \ref{prop:unifconst} to the case of a decreasing time step for the Euler scheme~\eqref{eq:Euler_scheme}.
 
 \begin{proposition} \label{prop:unifdecreas}
 We consider the same setting as in Proposition \ref{prop:contracttt}. In particular, we assume that $\HUNS$ holds true for the coefficients of equation \eqref{eq:sdetheta}.
 Let $p\ge 2$ and consider $d\in\distdomi$. Let $\{s_{k}; \, k\ge 0\}$ be the sequence of time steps defined by \eqref{eq:def-steps-sk}, which is assumed to verify
 \begin{equation}\label{eq:hyp_gamma}
\exists \,p'\ge p\;\textnormal{such that}\;\sum_{k=1}^{+\infty} \frac{\gamma_{k+1}^{{p'H}+1}}{s_k}<+\infty .
\end{equation}
Then we have
\begin{equation}\label{eq:conv_unif_decrease_step}
\lim_{N,n\rightarrow+\infty}\sup_{\te\in\tte}\left|d\left(\frac{1}{n}\sum_{k=0}^{n-1} \delta_{Y_{t_k}},\frac{1}{s_N}\sum_{k=0}^{N-1} \gamma_{k+1}\delta_{Z_{s_k}^{\te}} \right)- d(\nu_{\te_0},\nu_\te)\right|=0.
\end{equation}
\end{proposition}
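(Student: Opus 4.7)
The plan is to mirror the argument used for the constant-step case in Proposition \ref{prop:unifconst}(iii), with appropriate modifications for the decreasing step setting. First I would use the triangle inequality to bound the quantity of interest by
\[
d\left(\frac{1}{n}\sum_{k=0}^{n-1} \delta_{Y_{t_k}}, \nu_{\te_0}\right) + d\left(\frac{1}{s_N}\sum_{k=0}^{N-1} \gamma_{k+1}\delta_{Z_{s_k}^{\te}}, \nu_\te\right).
\]
The first piece converges almost surely to $0$ by Proposition \ref{prop:ergodicSDE}(ii) and does not involve $\te$, so the whole work reduces to showing that the second piece tends to $0$ uniformly in $\te$.

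To handle it, I would insert the intermediate measure $\frac{1}{s_N}\sum_{k=0}^{N-1}\gamma_{k+1}\delta_{Y^\te_{s_k}}$, where $Y^\te$ and $Z^\te$ are now driven by the same fBm as in \eqref{eq:sch}. Via the weighted version of the trivial coupling mentioned in Remark \ref{rem:strategy_using_Dp}, the distance between the $Z^\te$-measure and the $Y^\te$-measure is dominated by
\[
\left(\frac{1}{s_N}\sum_{k=0}^{N-1} \gamma_{k+1}|Y^\te_{s_k}-Z^\te_{s_k}|^p\right)^{1/p}.
\]
I would then invoke Proposition \ref{prop:rapdisccont}(ii) to replace $|Y^\te_{s_k}-Z^\te_{s_k}|^p$ by $e^{-\rho(s_k-s_{n_0})}|Y^\te_{s_{n_0}}-Z^\te_{s_{n_0}}|^p + C\sum_{j=n_0}^{k-1}\phi_{j,p}(Y_{s_j})e^{-\rho(s_k-s_{j+1})}$. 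Swapping the order of summation in the resulting double sum, the inner sum $\sum_{k>j}\gamma_{k+1}e^{-\rho(s_k-s_{j+1})}$ is uniformly bounded, so the whole weighted sum is controlled by a Cesàro-type average $\frac{1}{s_N}\sum_{j}\phi_{j,p}(Y^\te_{s_j})$ plus a boundary term that vanishes since $s_N\to\infty$. The drift contribution $\gamma_{j+1}^{p+1}|b_\te(Y_{s_j})|^p$ is dealt with by the sublinear growth of $b_\te$ (uniformly in $\te$ under $\HUNS$) and the uniform moment bounds \eqref{eq:controleuniverselmoment2}; since $\gamma_{j+1}\to 0$ an extra factor of $\gamma_{j+1}^p$ ensures vanishing in average. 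For the noise contribution $\int_0^{\gamma_{j+1}}|B_{s_j+t}-B_{s_j}|^p\,dt$, whose expectation is of order $\gamma_{j+1}^{pH+1}$, I would use the hypothesis \eqref{eq:hyp_gamma} combined with a Kronecker-type lemma: exploiting the Gaussian integrability of the fBm increments to lift the $L^{p'}$ summability coming from $\sum \gamma_{k+1}^{p'H+1}/s_k<\infty$ to an almost-sure statement provides the desired vanishing. Since none of these bounds involves $\te$, the control is uniform.

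It remains to prove that $\sup_{\te\in\tte}\,d\bigl(\frac{1}{s_N}\sum\gamma_{k+1}\delta_{Y^\te_{s_k}},\nu_\te\bigr)\to 0$ as $N\to\infty$. I would first establish the pointwise-in-$\te$ convergence by a weighted analogue of Proposition \ref{prop:ergodicSDE}(ii), which can be obtained along the lines of \cite{CP} (applied with weights $\gamma_{k+1}$ under \eqref{eq:condgamma}). To upgrade pointwise to uniform convergence, I would combine three ingredients: a Lipschitz-in-$\te$ estimate for the weighted occupation measure (proven exactly as in Proposition \ref{prop:contracttt}(i) using Proposition \ref{prop:bnd-l2norm-deviation-Y-te}), continuity of $\te\mapsto\nu_\te$ (Proposition \ref{prop:continuity-Q}), and the compactness of $\tte$ under $\HZERO$ to pass from equicontinuity and pointwise convergence to uniform convergence.

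The main obstacle I anticipate is the almost-sure control of the weighted noise average $\frac{1}{s_N}\sum_{j}\int_0^{\gamma_{j+1}}|B_{s_j+t}-B_{s_j}|^p\,dt$: transitioning from a moment bound of order $\gamma_{j+1}^{pH+1}$ to an almost-sure rate compatible with \eqref{eq:hyp_gamma} is the technical heart of the argument and is precisely where the flexibility $p'\ge p$ in \eqref{eq:hyp_gamma} is exploited (it lets one absorb a small polynomial loss from a Borel--Cantelli or maximal-inequality step applied to the Gaussian variables $B_{s_j+t}-B_{s_j}$).
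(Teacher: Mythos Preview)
Your overall decomposition matches the paper's: split into $A_{1,n}=d\bigl(\frac{1}{n}\sum\delta_{Y_{t_k}},\nu_{\te_0}\bigr)$ and $A_{2,N}(\te)=d\bigl(\frac{1}{s_N}\sum\gamma_{k+1}\delta_{Z^\te_{s_k}},\nu_\te\bigr)$, dispatch the first via Proposition~\ref{prop:ergodicSDE}(ii), and handle the second by inserting $Y^\te$ and using Proposition~\ref{prop:rapdisccont}(ii) together with a Fubini/Kronecker argument on the $\phi_{k,p}$ terms. Two points of comparison are worth recording.

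\emph{Routing through continuous time.} Where you propose to prove a weighted discrete ergodic theorem for $\frac{1}{s_N}\sum\gamma_{k+1}\delta_{Y^\te_{s_k}}$ directly and then upgrade to uniformity by equicontinuity, the paper instead inserts the \emph{continuous}-time occupation measure, writing $A_{2,N}\le A_{21,N}+A_{22,N}+A_{23,N}$ with $A_{23,N}=d\bigl(\frac{1}{s_N}\int_0^{s_N}\delta_{Y^\te_s}\,ds,\nu_\te\bigr)$ and $A_{22,N}$ the discrete-to-continuous comparison. This buys two things. First, the convergence of $A_{23,N}$ is already available from Proposition~\ref{prop:ergodicSDE}(i) and Proposition~\ref{prop:contracttt}(i), so no new weighted ergodic theorem is needed. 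Second, and more importantly, the paper treats $A_{22,N}$ and $A_{23,N}$ \emph{before} $A_{21,N}$: their vanishing (in $\mathcal W_{p'}$) yields the almost-sure bound $\sup_{\te}\sup_N\frac{1}{s_N}\sum\gamma_{k+1}|Y^\te_{s_k}|^{p'}<\infty$, which is precisely what is required to control the drift part of $\sum\phi_{j,p'}(Y^\te_{s_j})$ in the $A_{21,N}$ step. In your ordering you invoke \eqref{eq:controleuniverselmoment2} for this, but that is only an $L^p$ bound, not an almost-sure Ces\`aro bound at the non-uniform grid points $s_k$; you would need the analogue of \eqref{eq:contmomentocccont} for the weighted discrete sampling, which in the paper is obtained as a by-product of $A_{22,N}+A_{23,N}\to 0$.

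\emph{The role of $p'$ and the noise term.} The technical obstacle you flag is resolved more simply than you suggest: since $d\in\distdomi\subset\mathcal D_{p'}$, the paper works from the outset with $d=\mathcal W_{p'}$, computes $\ES\bigl[\int_0^\infty |B_t-B_{\underline t}|^{p'}/(1+t)\,dt\bigr]\le c\sum\gamma_{k+1}^{p'H+1}/s_k<\infty$ directly from \eqref{eq:hyp_gamma}, and then applies Kronecker's lemma almost surely. No Borel--Cantelli or maximal-inequality step is needed; the flexibility $p'\ge p$ is used only to be able to assume the hypothesis at the exponent one actually works with.
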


\begin{proof} For notational convenience, the proof will be detailed for the continuous-time Euler approximation $(Z^\te_t)_{t\ge0}$ defined by \eqref{eq:sch}, with step sequence $(\gamma_n)_{n\ge1}$. An application of the triangular inequality allows us to bound the left hand side of \eqref{eq:conv_unif_decrease_step} as follows:
\begin{equation}\label{eq:split_d}
\left| d\left(\frac{1}{n}\sum_{k=0}^{n-1} \delta_{Y_{t_k}},\frac{1}{s_N}\sum_{k=0}^{N-1} \gamma_{k+1}\delta_{Z_{s_k}^{\te}} \right)-d(\nu_{\te_0},\nu_\te)\right|
\leqslant A_{1,n}+A_{2,N}(\te).
\end{equation}
with
\begin{equation*}
A_{1,n}:=d\left(\frac{1}{n}\sum_{k=0}^{n-1} \delta_{Y_{t_k}},\nu_{\te_0}\right),\qquad
A_{2,N}(\te):=d\left(\frac{1}{s_N}\sum_{k=0}^{N-1} \gamma_{k+1}\delta_{Z_{s_k}^{\te}},\nu_\te \right).
\end{equation*}
Our claim can thus be reduced to prove that
$$\lim\limits_{n\to+\infty}A_{1,n}=0,
\quad\text{ and }\quad
\lim\limits_{N\to+\infty}\sup\limits_{\te\in\tte}A_{2,N}(\te)=0.
$$
Furthermore, the fact that $\lim_{n\to+\infty}A_{1,n}=0$ is a direct consequence of Proposition \ref{prop:ergodicSDE}-(ii). We thus focus on the asymptotic behavior of $A_{2,N}$ in the remainder of the proof.

In order to bound $A_{2,N}$, let us set $\underline{s}=\max\{s_k, s_k\le s\}$. Then we observe that 
$$
\frac{1}{s_N}\sum_{k=0}^{N-1} \gamma_{k+1}\delta_{Z_{s_k}^{\te}}=\frac{1}{s_N}\int_0^{s_N} \delta_{Z_{\underline{s}}^\te}ds.
$$
Therefore we can split $A_{2,N}$ into 
\begin{equation}\label{eq:split_A2N}
A_{2,N}(\te)\leqslant A_{21,N}(\te)+A_{22,N}(\te)+A_{23,N}(\te)
\end{equation}
where the quantities $A_{2j,N}(\te)$ for $j=1,2,3$ are defined as follows:
\begin{align}
&A_{21,N}(\te):=d\left(\frac{1}{s_N}\int_0^{s_N} \delta_{Z^\te_{\underline{s}}}ds,\frac{1}{s_N}\int_0^{s_N} \delta_{Y^\te_{\underline{s}}} ds\right),\nonumber\\
&A_{22,N}(\te):=d\left(\frac{1}{s_N}\int_0^{s_N} \delta_{Y^\te_{\underline{s}}}ds,\frac{1}{s_N}\int_0^{s_N} \delta_{Y^\te_{s}} ds\right),\label{eq:A22N}\\
&A_{23,N}(\te):=d\left(\frac{1}{s_N}\int_0^{s_N} \delta_{Y^\te_{s}}ds,\nu_\te\right).\label{eq:A23N}
\end{align}
We will now treat those three terms separately.

The term $A_{23,N}$ is easily handled by applying Proposition \ref{prop:ergodicSDE} $(i)$ (simple convergence) and Proposition \ref{prop:unifconst} $(i)$ (equicontinuity). We thus get 
$$
\lim\limits_{N\to+\infty}\sup\limits_{\te\in\tte}A_{23,N}(\te)=0.
$$
Now, let us  prove that $\lim\limits_{N\to+\infty}\sup\limits_{\te\in\tte}A_{22,N}(\te)=0.$ Since $d\in {\cal D}_p\subset{\cal D}_{p'}$, we can assume without loss of generality that
$d={\cal W}_{p'}$. We invoke the strategy outlined in Remark \ref{rem:strategy_using_Dp}. This means that we are reduced to prove the following limit :  
\begin{equation}\label{eq:new_convergence_to_prove}
\sup\limits_{\te\in\tte}\frac{1}{s_N}\int_{0}^{s_N}|Y_t^\te-Y_{\underline{t}}^\te|^{p'}dt \xrn{N\rightarrow+\infty}0\quad a.s.
\end{equation}
To this aim, we first note that 
\begin{equation}\label{eq:c1}
|Y_t^\te-Y_{\underline{t}}^\te|\le \int_{\underline{t}}^t |b_\te(Y_s^\te)|ds+\|\sigma\||B_{t}-B_{\underline{t}}| \, ,
\end{equation}
and thus there exists a constant $c_{p',\sigma}$ such that
\begin{equation}\label{eq:c}
\frac{1}{s_N}\int_{0}^{s_N}|Y_t^\te-Y_{\underline{t}}^\te|^{p'}dt\le c_{p,\sigma}\left[\frac{1}{s_N}\int_{0}^{s_N}\left|\int_{\underline{t}}^t |b_\te(Y_s^\te)|ds\right|^{p'} dt+\frac{1}{s_N}\int_0^{s_N}|B_{t}-B_{\underline{t}}|^{p'}dt\right].
\end{equation}
Next we upper bound the first term in the right hand side of \eqref{eq:c} invoking successively Jensen's inequality and Fubini's theorem. We obtain that 
\begin{eqnarray*}
\int_{0}^{s_N}\left|\int_{\underline{t}}^t |b_\te(Y_s^\te)|ds\right|^{p'} dt
&\leq& \int_{0}^{s_N}|t-\underline{t}|^{p'-1}\left(\int_{\underline{t}}^t |b_\te(Y_s^\te)|^{p'}ds\right) dt\\
&=&\int_{0}^{s_N}|b_\te(Y_s^\te)|^{p'}\left(\int_{s}^{\bar{s}} |t-\underline{s}|^{p'-1}dt\right) ds \, ,
\end{eqnarray*}
where we have introduced the additional notation $\bar{s}=\min\{s_k,s_k>s\}.$ Taking into account the fact that $|t-\underline{s}|\le \gamma_{k+1}$ for any $s,t\in[s_k,s_{k+1})$, we end up with
\begin{equation}\label{eq:term_b_theta}
\int_{0}^{s_N}\left|\int_{\underline{t}}^t |b_\te(Y_s^\te)|ds\right|^{p'} dt\leq \sum_{k=0}^{N-1}\ga_{k+1}^{p'}\int_{s_k}^{s_{k+1}}|b_\te(Y_s^\te)|^{p'}ds.
\end{equation}
Furthermore, since $b_\te$ is uniformly sublinear in $\te$ and owing to \eqref{eq:contmomentocccont} we know that $$C_1:=\sup\limits_{\te\in\tte}\sup\limits_{N\geq 1}\frac{1}{s_N}\int_{0}^{s_{N}}|b_\te(Y_s^\te)|^{p'}ds=\sup\limits_{\te\in\tte}\sup\limits_{N\geq 1}\frac{1}{s_N}\sum_{k=0}^{N-1}\int_{s_k}^{s_{k+1}}|b_\te(Y_s^\te)|^{p'}ds<+\infty\quad a.s.$$
Moreover, $\lim_{k\to+\infty}\ga_{k+1}^{p'}=0$. Hence, for all $\varepsilon>0$ there exists $k_0\geq0$ such that for all $k\geq k_0$, we have $\ga^{p'}_{k+1}<\varepsilon$. One can thus deduce that
\begin{align}\label{eq:f}
\sup\limits_{\te\in\tte}\frac{1}{s_N}\sum_{k=0}^{N-1}\ga_{k+1}^{p'}&\int_{s_k}^{s_{k+1}}|b_\te(Y_s^\te)|^{p'}ds\nonumber\\
&\leqslant\sup\limits_{\te\in\tte}\frac{1}{s_N}\sum_{k=0}^{k_0-1}\ga_{k+1}^{p'}\int_{s_k}^{s_{k+1}}|b_\te(Y_s^\te)|^{p'}ds+\sup\limits_{\te\in\tte}\frac{1}{s_N}\sum_{k=k_0}^{N-1}\ga_{k+1}^{p'}\int_{s_k}^{s_{k+1}}|b_\te(Y_s^\te)|^{p'}ds\nonumber\\
&\leqslant C_1\left(\frac{\ga_1^{p'}s_{k_0}}{s_N}+\varepsilon\right),
\end{align}
from which it is easily seen that
\begin{equation}\label{eq:d}
\sup\limits_{\te\in\tte}\frac{1}{s_N}\sum_{k=0}^{N-1}\ga_{k+1}^{p'}\int_{s_k}^{s_{k+1}}|b_\te(Y_s^\te)|^{p'}ds\xrn{N\rightarrow+\infty}0\quad a.s.
\end{equation}
We can conclude that the first term in the right hand side of \eqref{eq:c} vanishes as $N\to+\infty$ due to our identity \eqref{eq:term_b_theta}.

In order to prove that the second term in the right hand side of \eqref{eq:c} converges to $0$, we invoke Kronecker's lemma (in its continuous version, see \cite[Theorem 2.1]{E}). We get that it is sufficient to prove that:
$$\int_0^{+\infty} \frac{|B_{t}-B_{\underline{t}}|^{p'}}{1+t} dt<+\infty\quad a.s.$$
However, due to the fact that $(t-\underline{t})\leqslant\ga_{k+1}$ if $t\in[s_k,s_{k+1})$, we have,
$$\ES\left[\int_0^{+\infty} \frac{|B_{t}-B_{\underline{t}}|^{p'}}{1+t} dt\right]\le \int_0^{+\infty}\frac{1}{1+t} (t-\underline{t})^{{p'H}} dt\le c\sum_{k=1}^{+\infty}\frac{\gamma_{k+1}^{{p'H}+1}}{s_k}<+\infty$$
where $c$ is a positive constant and where the last inequality stems from hypothesis \eqref{eq:hyp_gamma}. Hence, Kronecker's lemma yields 
\begin{equation}\label{eq:e}
\frac{1}{s_N}\int_0^{s_N}|B_t-B_{\underline{t}}|^{p'}ds\xrn{N\rightarrow+\infty}0\quad a.s.
\end{equation}
Now inequality \eqref{eq:c} combined with \eqref{eq:d} and \eqref{eq:e} easily yields \eqref{eq:new_convergence_to_prove}.
We conclude that $~\lim\limits_{N\to+\infty}\sup\limits_{\te\in\tte}A_{22,N}(\te)=0~$.\\

Going back to our decomposition \eqref{eq:split_A2N}, we still have to prove that $~\lim_{N\to+\infty}\sup_{\te\in\tte}A_{21,N}(\te)=0.$
To this end, let us write $A_{21,N}(\te)$ in its discrete form:
\begin{equation}\label{eq:discrete_A21N}
A_{21,N}(\te)=d\left(\frac{1}{s_N}\sum_{k=0}^{N-1}\ga_{k+1}\delta_{Z^\te_{s_k}}, \frac{1}{s_N}\sum_{k=0}^{N-1}\ga_{k+1}\delta_{Y^\te_{s_k}}\right).
\end{equation} 
Then invoking Remark \ref{rem:strategy_using_Dp} once more, we are reduced to prove 
\begin{equation}\label{eq:new_convergence_to_prove2}
\frac{1}{s_N}\sum_{k=0}^{N-1}\gamma_{k+1}|Y_{s_{k}}^\te-Z_{s_{k}}^\te|^{p'}\xrn{N\rightarrow+\infty}0\quad a.s.
\end{equation}
In order to achieve \eqref{eq:new_convergence_to_prove2} consider the integer $n_0$ given by Proposition \ref{prop:rapdisccont} $(ii)$. For $k\leqslant n_0$ we trivially bound $|Y^\te_{s_k}-Z^\te_{s_k}|$ using \eqref{eq:dif-Y-te-euler-recursive}. Since \eqref{eq:sch} asserts that $\sup_{\te\in\tte,k\in\{1,\ldots,n_0\}}|Z_{s_k}^\te|<+\infty$, we get that
\begin{equation}\label{eq:bound_n_0}
\sup_{\te\in\tte,k\in\{1,\ldots,n_0\}}|Y_{s_k}^\te-Z_{s_k}^\te|=:C(\omega)<+\infty\quad a.s.
\end{equation}
We now bound the right hand side of \eqref{eq:new_convergence_to_prove2} by means of \eqref{eq:bound_n_0} whenever $k\leqslant n_0$ and invoking~\eqref{eq:dif-Y-te-euler-recursive-2} when $k\geqslant n_0+1$.
This gives
\begin{align}\label{eq:d1}
\frac{1}{s_N}\sum_{k=0}^{N-1}\gamma_{k+1}|Y_{s_{k}}^\te-Z_{s_{k}}^\te|^{p'}\le ~& C(\omega)^{p'}\frac{1}{s_N}\left(\sum_{k=0}^{n_0} \gamma_{k+1}+\sum_{k=n_0+1}^{N-1} \gamma_{k+1} e^{-\rho(s_k-s_{n_0})}\right)\nonumber\\
&+\frac{1}{s_N}\sum_{k=n_0+1}^{N-1} \gamma_{k+1}\sum_{\ell=n_0}^{k-1} \phi_{\ell,p'}(Y_{s_\ell}^\te) e^{-\rho(s_k-s_{\ell+1})}.
\end{align}
The first term in the right-hand side of \eqref{eq:d1} is clearly evanescent as $N\rightarrow+\infty$. Let us focus on the second one. By a Fubini type transformation, we get
\begin{align*}
\sum_{k=n_0+1}^{N-1} \gamma_{k+1}\sum_{\ell=n_0}^{k-1} \phi_{\ell,p'}(Y_{s_\ell}^\te) e^{-\rho(s_k-s_{\ell+1})}
&=\sum_{\ell=n_0}^{N-2} \phi_{\ell,p'}(Y_{s_\ell}^\te)\sum_{k=\ell+1}^{N-1}\gamma_{k+1}  e^{-\rho(s_k-s_{\ell+1})}\\
&\le
\sum_{\ell=0}^{N-2} \phi_{\ell,p'}(Y_{s_\ell}^\te) \sum_{k=\ell+1}^{N-1} \gamma_k e^{-\rho(s_k-s_{\ell+1})}
\end{align*}
where the last inequality is due to the fact that $(\ga_k)$ is non increasing.
Since $x\mapsto e^{-\rho x}$ is a non-increasing function, we remark that for $\ell\in\{0,\dots,N-2\}$,
$$\sum_{k=\ell+1}^{N-1} \gamma_k e^{-\rho(s_k-s_{\ell+1})}\le e^{\rho s_{\ell+1}}\int_{s_{\ell}}^{s_{N-1}} e^{-\rho x} dx=\frac{1}{\rho}\left(e^{\rho\ga_{\ell+1}}-e^{-\rho(s_{N-1}-s_{\ell+1})}\right)\le C_\rho.$$
Thus, in order to see that the right hand side of \eqref{eq:d1} vanishes as $N\to+\infty$, it remains to show that 
\begin{equation}\label{eq:laststep34}
\lim_{N\to+\infty}\sup_{\te\in\tte}\frac{1}{s_N}\sum_{\ell=0}^{N-1} \phi_{\ell,p'}(Y_{s_\ell}^\te)=0
\end{equation}
where we recall that $\phi_{\ell,p'}$ is defined by \eqref{eq:def-phi-k-p} and thus
\begin{equation}\label{eq:dec_phi}
\frac{1}{s_N}\sum_{\ell=0}^{N-1} \phi_{\ell,p'}(Y_{s_\ell}^\te)=\frac{1}{s_N}\sum_{\ell=0}^{N-1}\ga_{\ell+1}^{p'+1}|b_\te(Y_{s_\ell}^\te)|^{p'}+\frac{1}{s_N}\sum_{\ell=0}^{N-1}\int_0^{\ga_{\ell+1}}|B_{s_\ell+t}-B_{s_\ell}|^{p'}dt.
\end{equation}

Let us begin by the term involving $b_\te$ in \eqref{eq:dec_phi}. First notice that $$\mathcal{W}_{p'}\left(\frac{1}{s_N}\int_0^{s_N}\delta_{Y^\te_{\underline{s}}}ds, \nu_\te\right)\leqslant A_{22,N}(\te)+A_{23,N}(\te),$$
where $A_{22,N}$ and $A_{23,N}$ are respectively defined by \eqref{eq:A22N} and \eqref{eq:A23N} with $d=\mathcal{W}_{p'}$.
Furthermore, we have seen that  $~\lim_{N\to+\infty}\sup_{\te\in\tte}A_{22,N}(\te)=0~$ and $~\lim_{N\to+\infty}\sup_{\te\in\tte}A_{23,N}(\te)=0~$. We immediately deduce that 
$$\lim_{N\to+\infty}\sup_{\te\in\tte}\mathcal{W}_{p'}\left(\frac{1}{s_N}\int_0^{s_N} \delta_{Y^\te_{\underline{s}}}ds,\nu_\te\right)=0.$$ 
Moreover, by Proposition \ref{prop:controlmoment} for instance,  we have $~\sup_{\te\in\tte}\nu_\te(|\cdot|^{p'})<\infty$. We thus deduce that
$$\sup_{\te\in\tte}\sup_{N\geqslant1}\frac{1}{s_N}\sum_{\ell=0}^{N-1}\gamma_{\ell+1} |Y_{s_{\ell}}^\te|^{p'}<+\infty\quad a.s.$$
Furthermore, since $b_\te$ is uniformly sublinear in $\te$ and $\lim_{\ell\to+\infty}\ga^{p'}_{\ell+1}=0$, it easily follows along the same lines as for \eqref{eq:d} that:
\begin{equation}\label{eq:g}
\sup_{\te\in\tte}\frac{1}{s_N}\sum_{\ell=0}^{N-1}\gamma_{\ell+1}^{p'+1} |b_\te(Y_{s_{\ell}})|^{p'}\xrn{N\rightarrow+\infty}0.
\end{equation}
We now turn to the term in \eqref{eq:dec_phi} involving the fBm, for which we use the classical discrete Kronecker lemma. To this aim, we remark that 
\begin{equation}\label{eq:h}
\ES\left[\sum_{k=1}^{+\infty} \frac{1}{s_k}\int_0^{\gamma_{k+1}} |B_{s_k+t}-B_{s_k}|^{p'}dt\right]=\frac{1}{p'H+1}\sum_{k=1}^{+\infty} \frac{\gamma_{k+1}^{{p'H}+1}}{s_k}<+\infty,
\end{equation}
where the last inequality stems from assumption \eqref{eq:hyp_gamma}. From \eqref{eq:h}, it is easily seen that 
$$\sum_{\ell=0}^{+\infty} \frac{1}{s_\ell}\int_0^{\gamma_{\ell+1}} |B_{s_\ell+t}-B_{s_\ell}|^{p'}dt<+\infty\quad a.s.$$
We are thus in position to apply Kronecker's lemma to the sequence $(\int_0^{\gamma_{\ell+1}} |B_{s_\ell+t}-B_{s_\ell}|^{p'}dt)_{\ell\geqslant0}$. This yields
\begin{equation}\label{eq:i}
\frac{1}{s_N}\sum_{\ell=0}^{N-1} \int_0^{\gamma_{\ell+1}} |B_{s_\ell+t}-B_{s_\ell}|^{p'}dt\xrightarrow{N\rightarrow+\infty}0.
\end{equation}
Let us summarize our computations so far. Gathering \eqref{eq:g}, \eqref{eq:i} and \eqref{eq:dec_phi}, we have obtained relation~\eqref{eq:laststep34}. This easily implies that \eqref{eq:new_convergence_to_prove2} holds true and hence $A_{21,N}(\te)$ defined by \eqref{eq:discrete_A21N} satisfies 
$$\lim\limits_{N\to+\infty}\sup\limits_{\te\in\tte}A_{21,N}(\te)=0.$$
Since similar results have also been shown for  $A_{22,N}(\te)$ and $A_{23,N}(\te)$, relation \eqref{eq:split_A2N} gives
$$\lim\limits_{N\to+\infty}\sup\limits_{\te\in\tte}A_{2,N}(\te)=0.$$
Eventually, plugging this information into \eqref{eq:split_d} yields our claim \eqref{eq:conv_unif_decrease_step}.
\end{proof}

\subsection{A general convergence result}\label{sec:general-convergence}
The consistence of our estimators will rely on the following general proposition about convergence of minimizers for a sequence of random functions. Observe that our sequences below are indexed by a generic $r$ which sits in an unspecified set. This simplifies the subsequent applications of the proposition to our indices $N,n,\ga$ in the remainder of the section.
\begin{proposition}\label{prop:consistency_criteria}
Let $\tte$ be a compact set and $(\te\mapsto L_r(\te))_r$ denote a family of  non-negative random functions. 
Assume that:
 \ben
\item With probability one, ${\lim}_r L_r(\te)=  L(\te)$ uniformly in $\te\in\tte$.
\item The function $\te\mapsto L(\te)$ is non-random and continuous on $\tte$. 
\item For any $r$, the set $~{\rm argmin}\{ L_r(\te),{\te\in\tte}\}~$ is nonempty. 
\een
Then, for a fixed $r$, let $\hat{\te}_r \in {\rm argmin}\{ L_r(\te),{\te\in\tte}\}$. Let ${\cal A}$ denote the limit points of $(\hat{\te}_r)_r$.
 Then we have  $${\cal A}\subset  {\rm argmin}\{ L(\te),{\te\in\tte}\}.$$
 In particular, if $L$ attains its minimum for a unique $\te^\star$, then $\lim_r \hat{\te}_r=\te^\star$.
 \end{proposition}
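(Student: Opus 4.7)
The plan is to run the standard M-estimator consistency argument pathwise on the almost sure event where the uniform convergence $L_r \to L$ holds. So I fix $\omega$ in that event and work deterministically from here on.

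First, I would fix an arbitrary minimizer $\theta^\star \in \mathrm{argmin}\{L(\theta),\theta\in\Theta\}$, which exists because $L$ is continuous and $\Theta$ is compact. The basic inequality I will exploit is the definition of $\hat\theta_r$, namely $L_r(\hat\theta_r) \leqslant L_r(\theta^\star)$. The right-hand side converges to $L(\theta^\star)$ simply by pointwise convergence. To handle the left-hand side, I bound
\begin{equation*}
| L_r(\hat\theta_r) - L(\hat\theta_r) | \leqslant \sup_{\theta\in\Theta} | L_r(\theta) - L(\theta) | \xrightarrow[r\to\infty]{} 0
\end{equation*}
by the uniform convergence assumption; this is the key step where uniform (as opposed to pointwise) convergence is needed.

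Now pick any limit point $\theta^\sharp \in \mathcal{A}$ and a subsequence $(\hat\theta_{r_k})$ with $\hat\theta_{r_k} \to \theta^\sharp$ (which lies in $\Theta$ by compactness). Along this subsequence, $L(\hat\theta_{r_k}) \to L(\theta^\sharp)$ by continuity of $L$, and combining with the uniform convergence bound above gives $L_{r_k}(\hat\theta_{r_k}) \to L(\theta^\sharp)$. Passing to the limit in $L_{r_k}(\hat\theta_{r_k}) \leqslant L_{r_k}(\theta^\star)$ therefore yields $L(\theta^\sharp) \leqslant L(\theta^\star)$, which forces $\theta^\sharp \in \mathrm{argmin}\{L(\theta),\theta\in\Theta\}$. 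Since $\theta^\sharp$ was an arbitrary limit point, this proves $\mathcal{A} \subset \mathrm{argmin}\{L(\theta),\theta\in\Theta\}$.

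For the last assertion, if $L$ has a unique minimizer $\theta^\star$ then the previous inclusion forces $\mathcal{A} = \{\theta^\star\}$. Because $(\hat\theta_r)$ lives in the compact set $\Theta$, every subsequence has a further convergent subsequence, whose limit must belong to $\mathcal{A}$ and hence equal $\theta^\star$; a standard subsequence argument then gives $\hat\theta_r \to \theta^\star$. There is no serious obstacle in this proof — the only thing one has to be slightly careful about is ensuring that measurability of $\hat\theta_r$ is not needed in the statement (it is not: the conclusion holds for any measurable or non-measurable selection from the argmin), so the whole argument runs purely pointwise on the full-measure event of uniform convergence.
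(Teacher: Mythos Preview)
Your proof is correct and is essentially the same argument as the paper's: both use the minimizer inequality $L_{r}(\hat\theta_{r})\le L_{r}(\theta^\star)$, control $|L_{r}(\hat\theta_{r})-L(\hat\theta_{r})|$ via the uniform convergence, and use continuity of $L$ along the subsequence $\hat\theta_{r_k}\to\theta^\sharp$ to conclude $L(\theta^\sharp)\le L(\theta^\star)$. The only cosmetic difference is that the paper organizes the same three ingredients starting from the decomposition $L(\theta^\infty)\le L_{r_n}(\hat\theta_{r_n})+|L(\theta^\infty)-L_{r_n}(\hat\theta_{r_n})|$, whereas you first show $L_{r_k}(\hat\theta_{r_k})\to L(\theta^\sharp)$ and then pass to the limit in the basic inequality.
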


 \begin{proof}
 Let $\te^\star$ be an element of $~{\rm argmin}\{ L(\te),{\te\in\tte}\}$. We consider a generic element $\te^\infty\in {\cal A}$ and its related convergent subsequence $( \hat{\te}_{r_n})_{n\geqslant0}$.
 Then we can upper bound $L(\te^\infty)$ as follows:
\begin{equation}\label{eq:bound_L_te_infty}
L(\te^\infty)\le L_{r_n}( \hat{\te}_{r_n})+|L(\te^\infty)-L_{r_n}(\hat{\te}_{r_n})|.
\end{equation} 
We now bound the two terms in the right hand side of \eqref{eq:bound_L_te_infty}.
On the one hand, by definition of $\hat{\te}_{r}$,
\begin{equation*}
L_{r_n}(\hat{\te}_{r_n})\le L_{r_n}(\te^\star).
\end{equation*}
Hence, thanks to the fact that $\lim_r L_r(\te)=L(\te)$ for all $\te\in\tte$, we get
 \begin{equation}\label{eq:bound_t1}
 \limsup_{n\to+\infty} L_{r_n}(\hat{\te}_{r_n})\le L(\te^\star).
 \end{equation}
 On the other hand, we also have
\begin{equation}\label{eq:bound_t2}
|L(\te^\infty)-L_{r_n}(\hat{\te}_{r_n})|\le  |L(\te^\infty)-L(\hat{\te}_{r_n})|+ \sup_{\te\in\tte}|L(\te)-L_{r_n}(\te)|.
\end{equation}
Therefore we can invoke the continuity of $L$ to bound the first term in the right hand side of \eqref{eq:bound_t2}, plus the uniform convergence of $L_r$ to $L$ in order to handle the second term. This yealds
\begin{equation}\label{eq:bound_3}
\limsup_{n\to+\infty}|L(\te^\infty)-L_{r_n}(\hat{\te_{r_n}})|=0
\end{equation}
Plugging \eqref{eq:bound_3} and \eqref{eq:bound_t1} into \eqref{eq:bound_L_te_infty}, we obtain that $L(\te^\infty)\le L(\te^\star)$ and thus $\te^{\infty}$ belongs to the set ${\rm argmin}\{ L(\te),{\te\in\tte}\}$. This finishes the proof.
\end{proof}

\subsection{Proofs of the convergence theorems}\label{subsection:proofth}
With all our preliminary considerations in hand, we are now ready to prove the main convergence results for our estimators. This is briefly outlined below.

\begin{proof}[Proof of Theorem \ref{thm:cvgce-esti1}]
Recall that the family $\{\hat{\te}_{N,n,\ga},~ (N,n,\ga)\in\N^2\times \ER_+^*\}$ is defined by \eqref{eq:firstdefesti}.
We wish to apply Proposition \ref{prop:consistency_criteria} with $r=(N,n,\gamma)\in \mathbb{N}^2\times \ER_+^*$. We set $L_{N,n,\gamma}(\te)=d(\frac{1}{n}\sum_{k=0}^{n-1} \delta_{Y_{t_k}},\frac{1}{N}\sum_{k=0}^{N-1} \delta_{Z_{s_k}^{\te,\gamma}} )$.
By Proposition \ref{prop:unifconst}, we have uniformly in $\te\in\tte$,
\begin{equation}\label{def:ldteo}
\lim_{\ga\rightarrow0}\lim_{N,n\rightarrow+\infty} L_{N,n,\gamma}(\te) =d(\nu_{\te_0},\nu_{\te})=:L(\te).
\end{equation}
In addition, owing to Proposition \ref{prop:continuity-Q} and Assumption $\HDEUX$, $L$ is continuous and $\te_0$ is the unique minimum of $L$. We have thus checked that the hypothesis of Proposition \ref{prop:consistency_criteria} are fulfilled, from which Theorem  \ref{thm:cvgce-esti1} is easily deduced.
\end{proof}

\begin{proof}[Proof of Theorem \ref{thm:cvgce-esti2} ]
The proof goes along the same lines as for Theorem \ref{thm:cvgce-esti1}. Namely we apply Proposition \ref{prop:consistency_criteria} to the sequence $\{\hat{\te}_{N,n},~ (N,n)\in\N^2\}$ defined by \eqref{def:hatthetanngamman}. To this aim, we set $$L_{N,n}(\te)=d\left(\frac{1}{n}\sum_{k=0}^{n-1} \delta_{Y_{t_k}},\frac{1}{N}\sum_{k=0}^{N-1} \ga_{k+1}\delta_{Z_{s_k}^{\te}} \right).$$
Then according to Proposition \ref{prop:unifdecreas}, the sequence $(L_{N,n})_{N,n}$ converges uniformly to $L$  defined by  \eqref{def:ldteo} when $N,n\rightarrow+\infty$. Furthermore, the continuity of $L$ follows as in the proof of Theorem \ref{thm:cvgce-esti1}. Our claim is thus easily deduced.
\end{proof}

\begin{proof}[Proof of Theorem \ref{thm:cvgce-estimator-dicrete-theta} ]
We still wish to apply Proposition \ref{prop:consistency_criteria} to the family $\{\hat{\te}^{(\varepsilon)}_{N,n,\ga},~(N,n,\ga)\in\N^2\times \ER_+^*\}$ defined by \eqref{def:hatthetanngamma}. However, since we only assume $\HUNW$ instead of $\HUNS$, one is only able to obtain simple convergence properties on $\tte$.
In order to circumvent this problem, we have restricted our analysis to the discretized parameter set $\tte^{(\varepsilon)}$ introduced in \eqref{def:hatthetanngamma}.
For a given $\varepsilon>0$,
$\tte^{(\varepsilon)}$ is finite and hence, one deduces from Propositions \ref{prop:ergodicSDE}$(ii)$ and \ref{prop:ergodicEuler}, that 
$$\lim\limits_{\ga\to0}\lim\limits_{N,n\to+\infty}\sup_{\te\in \tte^{(\varepsilon)}}|L_{N,n,\gamma}(\te)-L(\te)|=0,$$
where $L$ is defined by \eqref{def:ldteo}. 
Now, denote by ${\cal A}^{(\varepsilon)}$ the set of limit points of $(\hat{\te}_{N,n,\gamma}^{(\varepsilon)})_{N,n,\ga}$. 
From Proposition \ref{prop:consistency_criteria}, one deduces that
$${\cal A}^{(\varepsilon)}\subset  {\rm argmin}\{L(\te),\te\in \tte^{(\varepsilon)}\}.$$
Furthermore, $L$ is a continuous function such that $L(\te_0)=0$. Thus, since ${\rm dist}(\te_0,\tte^{(\varepsilon)})\rightarrow0$ as $\varepsilon\rightarrow0$,
one deduces that  $\min_{\te\in\tte^{(\varepsilon)}} L(\te)\rightarrow0$ as $\varepsilon\rightarrow0$. Owing to $\HDEUX$, this implies that any sequence $(\te^{(\varepsilon)})_{\varepsilon}$ of ${\cal A}^{(\varepsilon)}$
converges to $\te_0$. This concludes the proof.
\end{proof}

\section{Rate of convergence: proof of Theorem \ref{thm:rate_of_convergence}}
All along this section, we assume $\HUNS$ and $\HTROIS$. Our aim is to bound the quantity $\ES[|\hat{\te}_{N,n,\ga}-\te_0|^2]$ where $\hat{\te}_{N,n,\ga}$ is defined by \eqref{eq:firstdefesti}.
Owing to $\HTROIS$, we are reduced to study
\begin{equation}\label{eq:q_error_distance}
\ES\left[d\left(\nu_{\te_0},\nu_{\hat{\te}_{N,n,\ga}}\right)^q\right]
\end{equation}
where $q:=2/\varsigma$ and $\varsigma\in(0,1]$ is given in $\HTROIS$.
Our strategy of proof is based on the following decomposition
\begin{lemma} Let $\hat{\te}_{N,n,\ga}$ be the estimator defined by \eqref{eq:firstdefesti} and recall that $\nu_\te$ is defined by \eqref{eq:defnute} for all $\te\in\tte$. Then $d\left(\nu_{\te_0},\nu_{\hat{\te}_{N,n,\ga}}\right)$ can be decomposed as 
\begin{equation}\label{eq:majo_d_nute0_nuhatte}
d\left(\nu_{\te_0},\nu_{\hat{\te}_{N,n,\ga}}\right)\leq 2D^{(1)}_{n}+2\sup_{\te\in\tte}D^{(2)}_{N,\ga}(\te)+2\sup_{\te\in\tte}D^{(3)}_{N,\ga}(\te)
\end{equation}
where $D^{(1)}_{n}$, $D^{(2)}_{N,\ga}(\te)$, and $D^{(3)}_{N,\ga}(\te)$ are respectively by
\begin{align}
&D^{(1)}_{n}:=d\left(\nu_{\te_0},\frac{1}{n}\sum_{k=0}^{n-1}\delta_{Y_{t_k}}\right),\label{eq:d_term1}\\
&D^{(2)}_{N,\ga}(\te):=d\left(\frac{1}{N}\sum_{k=0}^{N-1}\delta_{Z_{k\ga}^{\te,\ga}},\frac{1}{N}\sum_{k=0}^{N-1}\delta_{Y_{k\ga}^{\te}}\right),\label{eq:d_term2}\\
&D^{(3)}_{N,\ga}(\te):=d\left(\frac{1}{N}\sum_{k=0}^{N-1}\delta_{Y_{k\ga}^{\te}},\nu_{\te}\right).\label{eq:d_term3}
\end{align} 
\end{lemma}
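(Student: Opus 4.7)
The plan is to chain two applications of the triangle inequality with the defining minimization property of $\hat{\te}_{N,n,\ga}$. To lighten notation, I would introduce the shorthand $\mu_n^Y := \frac{1}{n}\sum_{k=0}^{n-1}\delta_{Y_{t_k}}$, $\mu_N^{Z,\te} := \frac{1}{N}\sum_{k=0}^{N-1}\delta_{Z_{k\ga}^{\te,\ga}}$, and $\mu_N^{Y,\te} := \frac{1}{N}\sum_{k=0}^{N-1}\delta_{Y_{k\ga}^{\te}}$, so that by definitions \eqref{eq:d_term1}--\eqref{eq:d_term3} we have $D^{(1)}_n = d(\nu_{\te_0},\mu_n^Y)$, $D^{(2)}_{N,\ga}(\te) = d(\mu_N^{Z,\te},\mu_N^{Y,\te})$, and $D^{(3)}_{N,\ga}(\te) = d(\mu_N^{Y,\te},\nu_{\te})$.

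First, inserting the chain $\nu_{\te_0} \to \mu_n^Y \to \mu_N^{Z,\hat{\te}_{N,n,\ga}} \to \mu_N^{Y,\hat{\te}_{N,n,\ga}} \to \nu_{\hat{\te}_{N,n,\ga}}$ via the triangle inequality for $d$, one obtains
\begin{align*}
d(\nu_{\te_0},\nu_{\hat{\te}_{N,n,\ga}}) \le~& d(\nu_{\te_0},\mu_n^Y) + d(\mu_n^Y,\mu_N^{Z,\hat{\te}_{N,n,\ga}}) \\
&+ d(\mu_N^{Z,\hat{\te}_{N,n,\ga}},\mu_N^{Y,\hat{\te}_{N,n,\ga}}) + d(\mu_N^{Y,\hat{\te}_{N,n,\ga}},\nu_{\hat{\te}_{N,n,\ga}}).
\end{align*}
The first summand is exactly $D^{(1)}_n$; the third and fourth are bounded by $\sup_{\te\in\tte} D^{(2)}_{N,\ga}(\te)$ and $\sup_{\te\in\tte} D^{(3)}_{N,\ga}(\te)$ respectively.

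Next, I would exploit the fact that, by \eqref{eq:firstdefesti}, the estimator $\hat{\te}_{N,n,\ga}$ is a minimizer over $\tte$ of the map $\te \mapsto d(\mu_n^Y,\mu_N^{Z,\te})$. Since $\te_0 \in \tte$ is an admissible competitor, this yields
\begin{equation*}
d(\mu_n^Y,\mu_N^{Z,\hat{\te}_{N,n,\ga}}) \le d(\mu_n^Y,\mu_N^{Z,\te_0}).
\end{equation*}
A second triangle inequality along the chain $\mu_n^Y \to \nu_{\te_0} \to \mu_N^{Y,\te_0} \to \mu_N^{Z,\te_0}$ then gives
\begin{equation*}
d(\mu_n^Y,\mu_N^{Z,\te_0}) \le d(\mu_n^Y,\nu_{\te_0}) + d(\nu_{\te_0},\mu_N^{Y,\te_0}) + d(\mu_N^{Y,\te_0},\mu_N^{Z,\te_0}),
\end{equation*}
and the three terms on the right-hand side are respectively $D^{(1)}_n$, $D^{(3)}_{N,\ga}(\te_0) \le \sup_{\te\in\tte} D^{(3)}_{N,\ga}(\te)$, and $D^{(2)}_{N,\ga}(\te_0) \le \sup_{\te\in\tte} D^{(2)}_{N,\ga}(\te)$.

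Collecting all contributions yields exactly
\begin{equation*}
d(\nu_{\te_0},\nu_{\hat{\te}_{N,n,\ga}}) \le 2 D^{(1)}_n + 2\sup_{\te\in\tte} D^{(2)}_{N,\ga}(\te) + 2\sup_{\te\in\tte} D^{(3)}_{N,\ga}(\te),
\end{equation*}
which is the desired bound \eqref{eq:majo_d_nute0_nuhatte}. I do not foresee a real obstacle: the argument is purely metric, relying only on the triangle inequality for $d$ and the variational characterization of $\hat{\te}_{N,n,\ga}$. The factor $2$ arises naturally from the symmetric invocation of $\hat{\te}_{N,n,\ga}$ and $\te_0$ on the two sides of the contrast through the intermediate measures $\mu_n^Y$ and $\mu_N^{Z,\cdot}$.
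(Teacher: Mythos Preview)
Your argument is correct and follows essentially the same approach as the paper: both proofs combine the triangle inequality with the minimizing property of $\hat{\te}_{N,n,\ga}$, inserting the intermediate measures $\mu_n^Y$, $\mu_N^{Z,\te}$ and $\mu_N^{Y,\te}$ in the same places. The only cosmetic difference is that the paper first writes a three-term triangle inequality and then splits the last term $d(\mu_N^{Z,\hat{\te}},\nu_{\hat{\te}})$ via $\mu_N^{Y,\hat{\te}}$, whereas you insert $\mu_N^{Y,\hat{\te}}$ directly in the initial chain; the resulting bounds are identical.
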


\begin{proof}
Let us write $\hat{\te}$ for $\hat{\te}_{N,n,\ga}$ throughout the proof in order to ease notations. We first apply the triangular inequality, which yields
\begin{equation*}
d\left(\nu_{\te_0},\nu_{\hat{\te}}\right)
\le d\left(\nu_{\te_0},\frac{1}{n}\sum_{k=0}^{n-1}\delta_{Y_{t_k}}\right)
+d\left(\frac{1}{n}\sum_{k=0}^{n-1}\delta_{Y_{t_k}},\frac{1}{N}\sum_{k=0}^{N-1}\delta_{Z_{k\ga}^{\hat{\te},\ga}}\right)
+d\left(\frac{1}{N}\sum_{k=0}^{N-1}\delta_{Z_{k\ga}^{\hat{\te},\ga}},\nu_{\hat{\te}}\right)
\end{equation*}
where $(Y_t)_{t\geq0}$ is the observation process given by \eqref{eq:sde}. Next, we invoke the fact that $\hat{\te}$ minimizes the quantity $d\left(\frac{1}{n}\sum_{k=0}^{n-1}\delta_{Y_{t_k}},\frac{1}{N}\sum_{k=0}^{N-1}\delta_{Z_{k\ga}^{\te,\ga}}\right)$ in $\tte$, which gives
\begin{equation}\label{eq:majo_d_nute0_nuhatte1}
d\left(\nu_{\te_0},\nu_{\hat{\te}}\right)
\le d\left(\nu_{\te_0},\frac{1}{n}\sum_{k=0}^{n-1}\delta_{Y_{t_k}}\right)
+d\left(\frac{1}{n}\sum_{k=0}^{n-1}\delta_{Y_{t_k}},\frac{1}{N}\sum_{k=0}^{N-1}\delta_{Z_{k\ga}^{\te_0,\ga}}\right)
+\sup\limits_{\te\in\tte}d\left(\frac{1}{N}\sum_{k=0}^{N-1}\delta_{Z_{k\ga}^{\te,\ga}},\nu_{\te}\right).
\end{equation}
We further split the second term in the right hand side of \eqref{eq:majo_d_nute0_nuhatte1} as follows:
\begin{align}\label{eq:majo_d_nute0_nuhatte2}
&d\left(\frac{1}{n}\sum_{k=0}^{n-1}\delta_{Y_{t_k}},\frac{1}{N}\sum_{k=0}^{N-1}\delta_{Z_{k\ga}^{\te_0,\ga}}\right)\nonumber\\
&\quad\le d\left(\frac{1}{n}\sum_{k=0}^{n-1}\delta_{Y_{t_k}},\nu_{\te_0}\right)+d\left(\nu_{\te_0},\frac{1}{N}\sum_{k=0}^{N-1}\delta_{Y_{k\ga}^{\te_0}}\right)+d\left(\frac{1}{N}\sum_{k=0}^{N-1}\delta_{Y_{k\ga}^{\te_0}},\frac{1}{N}\sum_{k=0}^{N-1}\delta_{Z_{k\ga}^{\te_0,\ga}}\right),
\end{align}
and resort to a similar decomposition for the third term in the right hand side of \eqref{eq:majo_d_nute0_nuhatte1}. It is then readily checked that plugging \eqref{eq:majo_d_nute0_nuhatte2} into \eqref{eq:majo_d_nute0_nuhatte1} we end up with our claim \eqref{eq:majo_d_nute0_nuhatte}.
\end{proof}
In the remainder of the section, we shall handle the $L^q$-moments of $D^{(1)}_{n}$, $\sup_{\te\in\tte}|D^{(2)}_{N,\ga}(\te)|$ and $\sup_{\te\in\tte}|D^{(3)}_{N,\ga}(\te)|$ separately, respectively in Sections \ref{subsection:first_term}, \ref{subsection:second_term} and \ref{subsection:third_term}.

\subsection{$L^q$ bound on $D^{(1)}_n$}\label{subsection:first_term}
We start this section by giving a notation concerning expectations of empirical measures. 
\begin{notation} Let $Y$ be the solution of equation \eqref{eq:sdetheta} and $t\geq0$. As previously, $\delta_{Y_t}$ denotes the Dirac measure at $Y_t$, considered as a random measure. Then $\be[\delta_{Y_t}]$ is the deterministic measure such that for all continuous and bounded $f:\R^d\to \R$ we have
$$\be[\delta_{Y_t}](f)=\be[f(Y_t)].$$
\end{notation}
With this notation in mind, we can now deliver our $L^q$ estimate for $D^{(1)}_n$.
\begin{lemma}
Let $D^{(1)}_n$ be the random variable defined by \eqref{eq:d_term1}. Then, whenever $d$ is given by $d_{\ccf,p}$ or $d_s$, defined respectively by \eqref{eq:def_CFp} with $p>(q+d)/2$ and \eqref{eq:def_ds}, we have:
\begin{equation}\label{eq:final_term1}
\ES\left[|D^{(1)}_{n}|^q\right]\leqslant C_q\left(n^{-q}+n^{-\frac{q}{2}(2-(2H\vee1))}\right).
\end{equation}
\end{lemma}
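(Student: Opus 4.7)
The plan is to couple the observation process $(Y_{t_k})$ with a stationary solution $\bar{Y}^{\te_0}$ of \eqref{eq:sdetheta} driven by the \emph{same} fBm as in \eqref{eq:sde}, and split $D^{(1)}_n$ via the triangle inequality into a \emph{transient} piece comparing $(Y_{t_k})$ with $(\bar{Y}^{\te_0}_{t_k})$, and a \emph{stationary ergodic} piece comparing $\frac{1}{n}\sum_k \delta_{\bar{Y}^{\te_0}_{t_k}}$ with $\nu_{\te_0}$. Under $\HUNS$, the exponential contraction inequality \eqref{eq:conv_exp} yields $\|Y_{t_k}-\bar{Y}^{\te_0}_{t_k}\|_{L^q}\le c_1 e^{-c_2 t_k}$, which will handle the transient piece straightforwardly.

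For $d=d_{\ccf,p}$, I would introduce the centred characteristic functional $\phi_\xi(y) := e^{i\langle\xi,y\rangle}-\nu_{\te_0}(e^{i\langle\xi,\cdot\rangle})$, which allows one to write $|D^{(1)}_n|^2 = \int_{\R^d}\bigl|\frac{1}{n}\sum_k \phi_\xi(Y_{t_k})\bigr|^2 g_p(\xi) d\xi$. Since $g_p(\xi)d\xi$ is a probability measure (by the very choice of $c_p$) and $q\ge 2$, Jensen's inequality pushes the $q/2$-th power inside the integral, giving $\ES[|D^{(1)}_n|^q]\le \int_{\R^d}\ES\bigl[\bigl|\frac{1}{n}\sum_k\phi_\xi(Y_{t_k})\bigr|^q\bigr] g_p(\xi) d\xi$. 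Applying Minkowski's inequality, $|e^{ia}-e^{ib}|\le|a-b|$, and the exponential contraction above, the transient contribution at level $\xi$ is bounded by $(C|\xi|/n)^q$; integrating against $g_p(\xi)=c_p(1+|\xi|^2)^{-p}$ then produces the $n^{-q}$ term, the integrability $\int|\xi|^q g_p(\xi) d\xi <\infty$ being precisely guaranteed by $p>(q+d)/2$. The case $d=d_s$ reduces to the same scheme: after normalising the weights $2^{-i}$ into a probability distribution, Jensen pushes the $q$-th power over the sum and each individual term is treated as for $\phi_\xi$, with $\|\nabla f_i\|_\infty$ playing the role of $|\xi|$.

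The real crux is the stationary ergodic contribution, where I need a bound of the form $\ES\bigl[\bigl|\frac{1}{n}\sum_k\phi_\xi(\bar{Y}^{\te_0}_{t_k})\bigr|^q\bigr]\lesssim (1+|\xi|)^q n^{-(q/2)(2-(2H\vee 1))}$. At the $L^2$ level this reduces to estimating the covariances $\ES[\phi_\xi(\bar{Y}_{t_j})\overline{\phi_\xi(\bar{Y}_{t_k})}]$ by means of the coupling argument underlying the proof of Proposition~\ref{prop:conv-eq-TV}; these decay at the rate $|j-k|^{-(2-(2H\vee 1))}$, and a Riemann-sum computation then yields the $n^{-(2-(2H\vee 1))}$ variance bound. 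This recovers both the classical $n^{-1}$ behaviour in the short-memory regime $H\le 1/2$ (summable correlations) and the slower long-memory rate $n^{2H-2}$ when $H>1/2$. Upgrading this $L^2$ estimate to an $L^q$ bound with the rate at the square-root level is the main technical obstacle; I would proceed either by combining the uniform bound $|\phi_\xi|\le 2$ with a Rosenthal-type inequality for stationary weakly dependent sequences, or by exploiting the Gaussian structure of the driving fBm and applying Malliavin-type concentration estimates to the Lipschitz functional $\omega\mapsto \frac{1}{n}\sum_k\phi_\xi(\bar{Y}_{t_k}(\omega))$. Combining the two contributions and using $(a+b)^q\le 2^{q-1}(a^q+b^q)$ delivers \eqref{eq:final_term1}.
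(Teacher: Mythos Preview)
Your overall strategy is sound and close in spirit to the paper's, but the decomposition and the key technical input differ. The paper does \emph{not} couple to the stationary solution; instead it centers at the mean, writing
\[
D^{(1)}_n \le d\Bigl(\nu_{\te_0},\tfrac{1}{n}\sum_k \ES[\delta_{Y_{t_k}}]\Bigr)
+ d\Bigl(\tfrac{1}{n}\sum_k \ES[\delta_{Y_{t_k}}],\tfrac{1}{n}\sum_k \delta_{Y_{t_k}}\Bigr)
=: D^{(11)}_n + D^{(12)}_n.
\]
The first (deterministic bias) term is handled exactly as you do, via the exponential contraction \eqref{eq:conv_exp}, giving the $n^{-q}$ contribution. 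For the fluctuation term $D^{(12)}_n$, the paper bypasses any covariance computation entirely and invokes a ready-made Gaussian concentration inequality (\cite[Theorem~2.3]{Var}):
\[
\PE\Bigl(\tfrac{1}{n}\sum_k f(Y_{t_k})-\ES[f(Y_{t_k})]\ge r\Bigr)\le C\exp\bigl(-C\|f\|_{\rm Lip}^{-2}\,r^2\,n^{2-(2H\vee 1)}\bigr),
\]
from which the $L^q$ bound $C_q\|f\|_{\rm Lip}^q\,n^{-\frac{q}{2}(2-(2H\vee 1))}$ follows immediately by tail integration. This is precisely the ``Malliavin-type concentration for Lipschitz functionals'' you allude to, made into a citable black box; no separate $L^2$-to-$L^q$ upgrade is needed.

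Your alternative route via covariance decay has a genuine gap. The claim that $\ES[\phi_\xi(\bar Y_{t_j})\overline{\phi_\xi(\bar Y_{t_k})}]$ decays like $|j-k|^{-(2-(2H\vee 1))}$ does \emph{not} follow from the coupling argument of Proposition~\ref{prop:conv-eq-TV}: that proposition controls $d_{\tv}(\cl(Y_t^\te),\nu_\te)$, i.e.\ relaxation to equilibrium from a fixed initial condition, whereas what you need is a \emph{mixing} estimate for the stationary process, which is a different (and for fBm-driven dynamics, more delicate) object. Even granting the $L^2$ rate, the Rosenthal-type upgrade you mention is not standard for long-memory sequences. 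The paper's concentration input resolves both issues at once, and your second suggestion (Gaussian/Malliavin concentration) is the right one to pursue---it is exactly what \cite{Var} makes precise.
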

\begin{proof}
We decompose $D^{(1)}_n$ as follows: 
\begin{equation}\label{eq:proof_term1}
D^{(1)}_{n}\leq d\left(\nu_{\te_0},\frac{1}{n}\sum_{k=0}^{n-1}\ES[\delta_{Y_{t_k}}]\right)+d\left(\frac{1}{n}\sum_{k=0}^{n-1}\ES[\delta_{Y_{t_k}}],\frac{1}{n}\sum_{k=0}^{n-1}\delta_{Y_{t_k}}\right)=:D^{(11)}_n+D^{(12)}_n.
\end{equation}
For the term $D^{(11)}_n$, we can use the contractivity assumption $\HUNS$ on the drift $b$ which implies that two solutions of the SDE \eqref{eq:sde} with different initial conditions converge exponentially pathwise to each other as $t\to+\infty$ (see e.g. \cite{GKN}).
More specifically, we have already seen in \eqref{eq:conv_exp} that the arguments of \cite{GKN} entail $\Vert Y_t-\bar{Y}_t\Vert_{L^p(\Omega}\leq c_1e^{-c_2 t}$ for two positive constants $c_1,c_2$, where we recall that $\bar{Y}$ designates the stationary solution of \eqref{eq:sdetheta}. Hence for a Lipschitz function $f:\R^d\to\R$ we easily get the existence of a constant $C>0$ such that
\begin{equation}\label{eq:term1_traj}
\left|\frac{1}{n}\sum_{k=0}^{n-1}\ES[f(Y_{t_k})]-\nu_{\te_0}(f)\right|\leq\frac{1}{n}\sum_{k=0}^{n-1}\ES[|f(Y_{t_k})-f(\bar{Y}_{t_k})|]\leq\frac{C}{n}\|f\|_{\rm Lip}.
\end{equation}
It remains to take into account the distance $d$. Recall that we only consider the two distances $d_{\ccf,p}$ and $d_s$ defined in Subsection \ref{sec:rateofconvergence}.
We thus easily deduce from \eqref{eq:term1_traj} and the definitions of $d_{\ccf,p}$ and $d_s$ that there exists a positive constant $\tilde{C}$ such that
\begin{equation}\label{eq:term1_traj_dist}
D^{(11)}_n\leq\max\left\{d_{\ccf,p}\left(\nu_{\te_0},\frac{1}{n}\sum_{k=0}^{n-1}\ES[\delta_{Y_{t_k}}]\right),d_s\left(\nu_{\te_0},\frac{1}{n}\sum_{k=0}^{n-1}\ES[\delta_{Y_{t_k}}]\right)\right\}\leq \frac{\tilde{C}}{n}.
\end{equation}
The term $D^{(12)}_n$ is handled in Proposition \ref{prop:term1_concentration_dist} below and specifically in relation \eqref{eq:term1_concentration_dist}. Therefore, plugging \eqref{eq:term1_traj_dist} and \eqref{eq:term1_concentration_dist} into \eqref{eq:proof_term1}, relation \eqref{eq:final_term1} is proved.
\end{proof}
\begin{proposition}\label{prop:term1_concentration_dist}
Let $d$ be one of the two distances $d_s$ and $d_{\ccf,p}$ with $p>(q+d)/2$. Then,
\begin{equation}\label{eq:term1_concentration_dist}
\ES\left[d\left(\frac{1}{n}\sum_{k=0}^{n-1}\delta_{Y_{t_k}},\frac{1}{n}\sum_{k=0}^{n-1}\ES[\delta_{Y_{t_k}}]\right)^q~\right]\leq C_qn^{-\frac{q}{2}(2-(2H\vee1))}.
\end{equation}
\end{proposition}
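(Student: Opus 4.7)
The plan is to reduce the bound on the distance between the random empirical measure $\mu_n := \frac{1}{n}\sum_{k=0}^{n-1}\delta_{Y_{t_k}}$ and its mean $\bar\mu_n := \E[\mu_n]$ to moment estimates for the elementary centered averages
$$
S_n(h) := \frac{1}{n}\sum_{k=0}^{n-1}\bigl(h(Y_{t_k})-\E[h(Y_{t_k})]\bigr),
$$
taken over suitable test functions $h$. For $d=d_{\ccf,p}$, since $g_p(\xi)d\xi$ is a probability measure by construction of $c_p$ and $q/2\ge 1$, Jensen's inequality gives
$$
\E\bigl[d_{\ccf,p}(\mu_n,\bar\mu_n)^q\bigr] \le \int_{\R^d}\E\bigl[|S_n(e^{i\langle\xi,\cdot\rangle})|^q\bigr]\,g_p(\xi)\,d\xi.
$$
An analogous computation for $d_s$ using the weights $2^{-i-1}$ bounds $\E[d_s(\mu_n,\bar\mu_n)^q]$ by a weighted series of $\E[|S_n(f_i)|^q]$ involving bounded functions of $\mathcal{C}_b^1$.

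The heart of the argument is the key moment bound
\begin{equation}\label{eq:key_sketch_bound}
\E\bigl[|S_n(h)|^q\bigr] \le C_q\,(1+\|h\|_{\mathrm{Lip}}^q)\,n^{-\frac{q}{2}(2-(2H\vee 1))},
\end{equation}
valid for bounded Lipschitz $h$. Once \eqref{eq:key_sketch_bound} is established, integration against $g_p$ is finite precisely under $p>(q+d)/2$, which matches the standing hypothesis; for $d_s$ the series is convergent thanks to the factor $2^{-i}$. To prove \eqref{eq:key_sketch_bound} I would first invoke the strong coercivity $\HUNS$ to couple $(Y_{t_k})_k$ with the stationary solution $(\bar Y_{t_k})_k$ driven by the same noise: the exponential contraction already used in Proposition~\ref{prop:conv-eq-TV} yields $\E[|Y_{t_k}-\bar Y_{t_k}|^p]^{1/p}\le c_1e^{-c_2 t_k}$, so replacing $h(Y_{t_k})$ by $h(\bar Y_{t_k})$ in $S_n(h)$ costs at most $O(n^{-1})$ in every $L^q$-norm, a term absorbed by the right-hand side of \eqref{eq:key_sketch_bound}.

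One is thereby reduced to the stationary sum $\bar S_n(h)=\frac{1}{n}\sum_k(h(\bar Y_{t_k})-\nu_{\te_0}(h))$. Its variance is controlled via covariance decay: using the Malliavin derivative of $h(\bar Y_t)$ together with the explicit covariance structure of the fBm increments, $\HUNS$ produces a bound on $|\mathrm{Cov}(h(\bar Y_0),h(\bar Y_{k\kappa}))|$ which is exponential in $k$ when $H\le 1/2$ and of order $k^{2H-2}$ when $H>1/2$, so that summing the covariances gives $\mathrm{Var}(\bar S_n(h))\lesssim\|h\|_{\mathrm{Lip}}^2\, n^{-(2-(2H\vee 1))}$. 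Upgrading from $L^2$ to $L^q$ is then obtained through Gaussian concentration on Wiener space: under $\HUNS$ the map $B\mapsto Y_t$ is Lipschitz uniformly in $t$ (another avatar of the contraction), so $\bar S_n(h)$ is a uniformly Lipschitz functional of the underlying Gaussian field and Borell's inequality gives $\E[|\bar S_n(h)|^q]\lesssim C_q\,\mathrm{Var}(\bar S_n(h))^{q/2}$.

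I expect the main obstacle to be the sharpness of the covariance decay in the long-memory regime $H>1/2$, because the standard $\alpha$-mixing machinery fails for fBm and only an integration-by-parts on Wiener space against the singular fBm covariance kernel delivers the rate $k^{2H-2}$; the book-keeping needed to propagate $\|h\|_{\mathrm{Lip}}$ through the Malliavin calculus while keeping the bound uniform in the test function $h_\xi(\cdot)=e^{i\langle\xi,\cdot\rangle}$ (whose Lipschitz constant grows like $|\xi|$) is the delicate technical point that determines the exponent $p>(q+d)/2$ in the statement.
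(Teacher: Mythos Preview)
Your reduction step is exactly what the paper does: Jensen's inequality against the probability measure $g_p(\xi)\,d\xi$ (or the weights $2^{-i}$) reduces the problem to the moment bound \eqref{eq:key_sketch_bound} for a single Lipschitz test function, and the condition $p>(q+d)/2$ is precisely what makes $\int |\xi|^q g_p(\xi)\,d\xi$ finite. The paper then obtains \eqref{eq:key_sketch_bound} in one stroke, by quoting the concentration inequality of \cite[Theorem~2.3]{Var}: there exists $C>0$ such that for every Lipschitz $f$ and every $r\ge 0$,
\[
\PE\!\left(\frac{1}{n}\sum_{k=0}^{n-1}\bigl(f(Y_{t_k})-\E[f(Y_{t_k})]\bigr)\ge r\right)\le C\exp\!\bigl(-C\|f\|_{\mathrm{Lip}}^{-2}\,r^{2}\,n^{2-(2H\vee1)}\bigr),
\]
and then integrates the tail. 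No stationarisation, no covariance computation, no Malliavin calculus is needed at this level; everything is packaged inside the cited result.

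Your proposed re-derivation of \eqref{eq:key_sketch_bound} contains a genuine gap. Borell's inequality controls the tail of a Lipschitz functional $F$ of a Gaussian field by its \emph{Lipschitz constant} in the Cameron--Martin norm, giving $\E[|F-\E F|^q]\le C_q L^q$; it does \emph{not} give $\E[|F|^q]\lesssim \mathrm{Var}(F)^{q/2}$ unless $F$ is itself Gaussian. For a nonlinear $h$, $\bar S_n(h)$ is not Gaussian, and the Gaussian Poincar\'e inequality only yields the reverse comparison $\mathrm{Var}(\bar S_n(h))\le L_n^2$. Consequently your covariance-decay computation, even if carried out successfully, does not feed into Borell. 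What is actually required---and what \cite{Var} does---is to bound directly the Lipschitz constant $L_n$ of $B\mapsto \frac{1}{n}\sum_k h(Y_{t_k})$ in the Cameron--Martin norm of the fBm, exploiting the exponential contraction of the flow under $\HUNS$ together with the explicit structure of that norm; this is where the exponent $2-(2H\vee1)$ emerges. Your Malliavin/covariance step is therefore both unnecessary and, as written, insufficient for the upgrade from $L^2$ to $L^q$.
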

The proof of the proposition is based on the following lemma:
\begin{lemma}\label{lem:moment_concentration} Recall that $(Y_t)_{t\geq0}$ is given by \eqref{eq:sde}. Then for all $q\geqslant1$ and for all Lipschitz function $f:\R^d\to\R$,
\begin{equation}\label{eq:moment_concentration}
\ES\left[\left| \frac{1}{n}\sum_{k=0}^{n-1}f(Y_{t_k})-\ES[f(Y_{t_k})]\right|^q~\right]\leqslant C_q\|f\|_{\rm Lip}^q n^{-\frac{q}{2}(2-(2H\vee1))}.
\end{equation}
\end{lemma}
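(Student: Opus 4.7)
The plan is to apply Gaussian concentration on the Wiener space underlying the driving fBm. Recall that via the Molchan-Volterra representation $B_t=\int_0^t K_H(t,s)\,dW_s$, for a standard Brownian motion $W$ and kernel $K_H$, the Cameron-Martin space of $B$ is canonically isometric to $L^2([0,t_n])$ through $K_H$. It is therefore natural to view the centered functional
\[
F(W) \;:=\; \frac{1}{n}\sum_{k=0}^{n-1}\bigl[f(Y_{t_k}) - \ES[f(Y_{t_k})]\bigr]
\]
as a Lipschitz functional on the classical Wiener space of $W$. The target inequality \eqref{eq:moment_concentration} would then follow from the Gaussian $L^q$-concentration inequality (the integrated form of the log-Sobolev inequality), provided the Malliavin derivative norm $\|D^W F\|_{L^2}$ admits a deterministic bound of order $\|f\|_{\rm Lip}\,n^{-(1-(H\vee 1/2))}$.

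To obtain this bound, I would first estimate pointwise $|D^W_r Y_{t_k}|$ for $0\leq r\leq t_k$. Differentiating equation \eqref{eq:sdetheta} with respect to $W$ and exploiting the contractivity $\HUNS(i)$ by the same Gronwall-type argument as in the proof of Proposition~\ref{prop:rapdisccont} (see in particular inequality \eqref{d21}), one obtains a bound of the form
\[
|D^W_r Y_{t_k}| \;\leq\; C\,K_H(t_k,r)\,e^{-\frac{\alpha}{2}(t_k-r)} + C\int_r^{t_k} K_H(s,r)\,e^{-\frac{\alpha}{2}(t_k-s)}\,ds,
\]
the second term reflecting the contribution of the kernel to the forcing of the variational equation. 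Applying $\|f\|_{\rm Lip}$-Lipschitzness, summing over $k$ and taking the $L^2([0,t_n])$ norm then yields
\[
\|D^W F\|_{L^2}^2 \;\leq\; \frac{C\|f\|_{\rm Lip}^2}{n^2} \int_0^{t_n} \Bigl(\sum_{k\,:\,t_k\geq r} e^{-\frac{\alpha}{2}(t_k-r)}\,\bigl(K_H(t_k,r) + \textstyle\int_r^{t_k} K_H(s,r)\,ds\bigr)\Bigr)^{2}\,dr.
\]

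The main obstacle is the asymptotic analysis of the right-hand side, which must be carried out separately in the two regimes $H\le 1/2$ and $H>1/2$. In the short-memory case $H\le 1/2$, the kernel $K_H$ is either trivial ($H=1/2$) or singular at the diagonal but square-integrable in the past direction; the exponential factor then tames the inner sum uniformly in $r$, leading to $\|D^W F\|_{L^2}^2\leq C/n$. In the long-memory case $H>1/2$, the regular but slowly decaying kernel produces an additional growth of order $n^{2H-1}$ in the inner sum, resulting in $\|D^W F\|_{L^2}^2\leq C\, n^{-2(1-H)}$. Combining these two estimates yields the unified rate $\|D^W F\|_{L^2}^2\leq C\|f\|_{\rm Lip}^2 \, n^{-(2-(2H\vee 1))}$, after which the Gaussian $L^q$-moment bound for Lipschitz Wiener functionals delivers \eqref{eq:moment_concentration}. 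The delicate point is precisely to verify that the exponential contraction extracted from $\HUNS$ is strong enough, in each regime, to compensate the singular (respectively nonlocal) character of $K_H$ and produce the claimed rate.
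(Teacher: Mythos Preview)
Your approach is sound in spirit but takes a substantially different route from the paper. The paper's proof is essentially a two-line argument: it quotes the sub-Gaussian tail bound
\[
\PE\Bigl(\tfrac{1}{n}\sum_{k=0}^{n-1}f(Y_{t_k})-\ES[f(Y_{t_k})]\ge r\Bigr)\le C\exp\bigl(-C\|f\|_{\rm Lip}^{-2}r^{2}n^{2-(2H\vee1)}\bigr)
\]
from \cite[Theorem~2.3]{Var} as a black box, and then integrates this tail via $\ES[|X|^q]=\int_0^\infty q x^{q-1}\PE(|X|\ge x)\,dx$ to obtain \eqref{eq:moment_concentration}. What you are sketching is, in effect, a reconstruction of the proof of that cited theorem: the Molchan--Volterra representation, the deterministic control of $\|D^W F\|_{L^2}$ under the contractivity $\HUNS$, and the Herbst/log-Sobolev argument on Wiener space are precisely the machinery behind \cite{Var}. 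So your strategy is correct and self-contained, whereas the paper simply outsources this step.

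A word of caution on your sketch: the analysis you flag as ``delicate'' (the interaction of the exponential contraction with the singularity or long memory of $K_H$) is indeed the entire technical content here, and your outline leaves it at the level of a heuristic. In particular, for $H<1/2$ the kernel $K_H(t,r)$ is singular at $r=t$, so the pointwise bound you wrote for $|D^W_r Y_{t_k}|$ is not obviously square-integrable without further work; and for $H>1/2$ the claimed $n^{2H-1}$ growth of the inner sum requires a careful estimate of $\int_0^{t_n}\bigl(\sum_k e^{-\alpha(t_k-r)/2}K_H(t_k,r)\bigr)^2 dr$ that you have not carried out. These are exactly the computations done in \cite{Var}, so if you intend this as a standalone proof you should either fill them in or cite that reference for the kernel estimates.
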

\begin{proof}We invoke a concentration result for large time borrowed from \cite[Theorem 2.3]{Var}. This result asserts that: there exists $C>0$ such that for all Lipschitz functions $f:\R^d\to\R$ and for all $r\geqslant0$, 
\begin{equation}\label{eq:term1_concentration}
\PE\left( \frac{1}{n}\sum_{k=0}^{n-1}f(Y_{t_k})-\ES[f(Y_{t_k})]\ge r\right)\le C\exp(-C\|f\|^{-2}_{\rm Lip}r^{2}n^{2-(2H\vee1)}).
\end{equation}
Therefore, one can check that \eqref{eq:moment_concentration} holds true by plugging inequality \eqref{eq:term1_concentration} into the classical formula $$\ES[X^q]=\int_0^{+\infty}qx^{q-1}\PE(X\geq x)dx,$$ which is valid for any positive random variable $X$.
\end{proof}

\begin{proof}[Proof of Proposition \ref{prop:term1_concentration_dist}]We will only give details about $d_{\ccf,p}$ since $d_s$ can be treated exactly along the same lines. Furthermore, since our parameter $q$ is greater than $2$, by using Jensen inequality and the linearity of $\ES$ into the definition \eqref{eq:def_CFp} of $d_{\ccf,p}$, we get:
$$\ES\left[d_{\ccf,p}\left(\nu_{\te_0},\frac{1}{n}\sum_{k=0}^{n-1}\ES[\delta_{Y_{t_k}}]\right)^q~\right]\leqslant\int_{\R^d}\ES\left[\left| \frac{1}{n}\sum_{k=0}^{n-1}f_\xi(Y_{t_k})-\ES[f_\xi(Y_{t_k})]\right|^q\right]g_p(\xi)d\xi,$$
where $f_\xi(x)=e^{i\langle\xi,x\rangle}$. Since $\|f_{\xi}\|_{\rm Lip}\leq |\xi|$, we thus deduce from Lemma \ref{lem:moment_concentration} that
$$\ES\left[d_{\ccf,p}\left(\nu_{\te_0},\frac{1}{n}\sum_{k=0}^{n-1}\ES[\delta_{Y_{t_k}}]\right)^q~\right]\leqslant C_q n^{-\frac{q}{2}(2-(2H\vee1))}\int_{\R^d}|\xi|^qg_p(\xi)d\xi.$$
The integral in the last inequality is finite owing to the fact that we chose $p>(q+d)/2$. Our claim thus follows.
\end{proof}

\subsection{$L^q$ bound on $D^{(2)}_{N,\ga}$}\label{subsection:second_term}
Our aim in this section is to get an equivalent of relation \eqref{eq:final_term1} for the term $D^{(2)}_{N,\ga}$ defined by \eqref{eq:d_term2}, namely 
\begin{equation}\label{eq:final_term2}
\ES\left[\sup\limits_{\te\in\tte}|D^{(2)}_{N,\ga}(\te)|^q\right]\leq C\ga^{qH}.
\end{equation}
where the distance $d$ in the definition of $D^{(2)}_{N,\ga}$ is either $d_s$ or $d_{\ccf,p}$. To this end, resorting to the fact that $d_{\ccf,p}$ and $d_s$ are both elements of $\mathcal{D}_1$, the quantity \eqref{eq:d_term2} can be upper-bounded as follows:
$$\sup\limits_{\te\in\tte}D^{(2)}_{N,\ga}(\te)\leq \frac{1}{N}\sum_{k=0}^{N-1}\sup\limits_{\te\in\tte}|Y_{k\ga}^\te-Z_{k\ga}^{\te,\ga}|.$$
We thus deduce that
$$\ES\left[\sup\limits_{\te\in\tte}|D^{(2)}_{N,\ga}(\te)|^q\right]\leq \ES\left[\left(\frac{1}{N}\sum_{k=0}^{N-1}\sup\limits_{\te\in\tte}|Y_{k\ga}^\te-Z_{k\ga}^{\te,\ga}|\right)^{q}\right].
$$
Recall that $q\geq2$. Hence a direct application of Jensen's inequality gives
\begin{equation}\label{eq:majo_jensen_term2}
\ES\left[\sup\limits_{\te\in\tte}|D^{(2)}_{N,\ga}(\te)|^q\right]\leq \frac{1}{N}\sum_{k=0}^{N-1}\ES\left[\sup\limits_{\te\in\tte}|Y_{k\ga}^\te-Z_{k\ga}^{\te,\ga}|^q\right].
\end{equation}
Now according to  Proposition \ref{prop:rapdisccont} $(i)$ and Proposition \ref{prop:control_moment_Z} $(ii)$ (see also the proof of Proposition \ref{prop:unifconst} $(i)$), it is readily checked that
$$\sup_{k\geq0}\ES\left[\sup\limits_{\te\in\tte}|Y_{k\ga}^\te-Z_{k\ga}^{\te,\ga}|^{q}\right]\leqslant C\ga^{qH}.$$
Gathering this information with \eqref{eq:majo_jensen_term2}, inequality \eqref{eq:final_term2} is easily deduced.

\subsection{$L^q$ bound on $D^{(3)}_{N,\ga}$}\label{subsection:third_term}
The quantity \eqref{eq:d_term3} is the hardest to treat among the terms in our decomposition \eqref{eq:majo_d_nute0_nuhatte}, due to the fact that we wish to achieve a uniform bound in $\te$. We summarize our analysis in the following lemma.
\begin{lemma} Let $D^{(3)}_{N,\ga}$ be the random variable defined by \eqref{eq:d_term3}, and assume that $d$ is either $d_s$ or $d_{\ccf,p}$ with $p>(q+d)/2$. Then, we have
\begin{equation}\label{eq:final_term3}
\ES\left[\sup_{\te\in\tte}D^{(3)}_{N,\ga}(\te)^q\right]\leq C_q\left(\ga^{qH}+T^{-\tilde{\eta}}\right)
\end{equation}
with $\tilde{\eta}:=\frac{q^2}{2(q+d)}(2-(2H\vee1))$ and $T=N\ga$.
\end{lemma}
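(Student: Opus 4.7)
The first move will be to isolate two distinct sources of error inside $D^{(3)}_{N,\ga}(\te)$ by inserting the continuous-time occupation measure of $Y^\te$:
\[
D^{(3)}_{N,\ga}(\te)\;\le\; d\!\left(\frac{1}{N}\sum_{k=0}^{N-1}\delta_{Y_{k\ga}^\te},\frac{1}{T}\int_0^T\delta_{Y_s^\te}\,ds\right)+d\!\left(\frac{1}{T}\int_0^T\delta_{Y_s^\te}\,ds,\nu_\te\right).
\]
The first quantity is a pure discretization error. Since $d\in \cd_{1}$, the representation \eqref{eq:W1_Lip} bounds it by the $\cw_{1}$-distance, which is at most $\frac{1}{T}\int_0^T|Y_s^\te-Y_{\underline{s}}^\te|\,ds$. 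Under $\HUNS$ the drift is Lipschitz with uniform control on the moments of $Y^\te$, so the increment $|Y_s^\te-Y_{\underline s}^\te|$ splits into a drift contribution of order $\ga$ and an fBm contribution of order $\ga^H$. Taking $q$-th moment and the $\sup$ in $\te$ (using the uniform-in-$\te$ bounds of Propositions~\ref{prop:conv-eq-TV} and~\ref{prop:contracttt}) will produce a contribution bounded by $C\ga^{qH}$.

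The second quantity is the ergodic fluctuation; I would further split it into a bias part $d(\frac{1}{T}\int_0^T\ES[\delta_{Y_s^\te}]ds,\nu_\te)$ and a centered part $d(\frac{1}{T}\int_0^T\delta_{Y_s^\te}ds,\frac{1}{T}\int_0^T\ES[\delta_{Y_s^\te}]ds)$. The bias is handled exactly as in \eqref{eq:term1_traj_dist}: the exponential contraction $\|Y_t^\te-\bar Y_t^\te\|_{L^q}\le c_1 e^{-c_2 t}$ granted by $\HUNS$ (already used in the argument leading to \eqref{eq:conv_exp}) yields a bias in $d$ of order $1/T$, whose $q$-th power is negligible compared to $T^{-\tilde\eta}$.

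For the centered (stochastic) part, the plan is to use the explicit representation of $d$. Working with $d=d_{\ccf,p}$ (the case $d=d_s$ being analogous), Jensen's inequality for the probability measure $g_p(\xi)d\xi$ gives, for each $\te$,
\[
d_{\ccf,p}\!\left(\tfrac{1}{T}\!\int_0^T\!\delta_{Y_s^\te}ds,\,\tfrac{1}{T}\!\int_0^T\!\ES[\delta_{Y_s^\te}]ds\right)^{\!q}\!\le\int_{\R^d}|G(\te,\xi)|^q g_p(\xi)d\xi,
\]
with $G(\te,\xi):=\frac{1}{T}\int_0^T[f_\xi(Y_s^\te)-\ES f_\xi(Y_s^\te)]ds$ and $f_\xi(x)=e^{i\langle\xi,x\rangle}$. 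The continuous-time analog of the concentration estimate of Lemma~\ref{lem:moment_concentration} (the statement of \cite{Var} also covers the continuous-time version) gives, since $\|f_\xi\|_{\rm Lip}\le|\xi|$,
\[
\ES\!\left[|G(\te,\xi)|^q\right]\;\le\;C\,|\xi|^{q}\,T^{-\frac{q}{2}(2-(2H\vee 1))}.
\]

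The main obstacle is that we need the $\sup$ over $\te\in\tte$ inside the $\xi$-integral. I would control it by combining the $\te$-Lipschitz regularity of $G(\cdot,\xi)$ (whose Lipschitz constant is of order $|\xi|$, thanks to Propositions~\ref{prop:bnd-l2norm-deviation-Y-te} and~\ref{prop:contracttt}) with the per-$\te$ moment bound via a standard covering/Sobolev-type balance on the compact set $\tte\subset\R^q$: writing $F(\te):=|G(\te,\xi)|$ and comparing the volume of the level set $\{F\ge F(\te^\star)/2\}$ to $L^{-q}F(\te^\star)^q$ yields $\|F\|_{L^\infty}^q\le C\,L^{\alpha}\|F\|_{L^{q}}^{q-\alpha}$ for the right $\alpha$. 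Optimising this bound, taking expectation, integrating against $g_p(\xi)d\xi$, and using the constraint $p>(q+d)/2$ (which is precisely what allows the $\xi$-integrals $\int|\xi|^{\mu}g_p(\xi)d\xi$ to converge for $\mu$ up to the critical value) will give the announced $T^{-\tilde\eta}$ with $\tilde\eta=\frac{q^2}{2(q+d)}(2-(2H\vee 1))$. The gap between the per-$\te$ rate $T^{-q(2-(2H\vee 1))/2}$ and the uniform rate $T^{-\tilde\eta}$ is precisely the price paid for uniformization via this interpolation, and pinning down the exact exponent—rather than any one of the simpler rates obtainable by cruder Sobolev embeddings—is the delicate technical point. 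Collecting the discretization $\ga^{qH}$, the bias $T^{-q}$, and the fluctuation $T^{-\tilde\eta}$ yields~\eqref{eq:final_term3}.
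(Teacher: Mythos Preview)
Your overall architecture matches the paper's proof exactly: the same splitting into a time-discretization piece (giving $\ga^{qH}$ via the $\cd_1$-domination and the increment bound on $Y^\te_s-Y^\te_{\underline s}$), a bias piece (of order $T^{-1}$ via the exponential coupling \eqref{eq:conv_exp}), and a centered ergodic fluctuation handled by the continuous-time concentration analogue of Lemma~\ref{lem:moment_concentration}.

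The one place you diverge from the paper is the uniformization in $\te$. The paper works with the scalar map $\varphi(\te):=d(\nu_\te,\tfrac1T\int_0^T\delta_{Y^\te_s}ds)$ itself, proves it is random-Lipschitz in $\te$ with constant $\zeta_T$ of bounded moments (using both \eqref{eq:nu_te_lip} and \eqref{eq:mes_oc_lip}), lays down an $\varepsilon$-net $\tte^{(\varepsilon)}$ of cardinality $M_\varepsilon\lesssim\varepsilon^{-d}$, bounds $\ES[\sup_\te\varphi(\te)^q]\le c\,\ES[\zeta_T^q]\varepsilon^q+c\,M_\varepsilon\,T^{-\frac q2(2-(2H\vee1))}$, and optimizes $\varepsilon=T^{-\eta}$ to land on $\tilde\eta$. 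You instead push the $\sup_\te$ inside the $\xi$-integral and, for each fixed $\xi$, use a pathwise level-set/Sobolev inequality $\|G(\cdot,\xi)\|_\infty^{q+d}\le C\,L(\xi)^{d}\|G(\cdot,\xi)\|_{L^q(\tte)}^{q}$ (your volume comparison should read $L^{-d}F(\te^\star)^{d}$, not $L^{-q}F(\te^\star)^{q}$) combined with H\"older in expectation; integrating against $g_p$ then uses the hypothesis $p>(q+d)/2$. Carried out carefully this gives the same exponent $\tilde\eta$. The paper's route has the advantage of treating $d_s$ and $d_{\ccf,p}$ uniformly at the level of the distance, without unpacking the $\xi$-representation; your route is a bit more analytic and avoids the explicit optimization in $\varepsilon$, at the cost of an extra H\"older step and of being tied to the integral structure of $d_{\ccf,p}$ (and the series structure of $d_s$).
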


\begin{proof}
We will further decompose the term $D^{(3)}_{N,\ga}$ and then divide our analysis in several steps.
First, let us introduce some notations: denote by $T$ the quantity $N\ga$ and for all $t\in[0,T]$, set $\underline{t}:=\inf\{k\ga~|~k\ga\leqslant t<(k+1)\ga\}$ as we did in the proof of Proposition \ref{prop:unifdecreas}. With this notations in hand, we have $D^{(3)}_{N,\ga}(\te)=d(\nu_\te, \frac{1}{T}\int_0^T\delta_{Y^\te_{\underline{t}}}dt)$ from which we deduce the following decomposition:
\begin{equation}\label{eq:majo_term3}
\sup_{\te\in\tte}D^{(3)}_{N,\ga}(\te)\leq \sup_{\te\in\tte} D^{(31)}_{N,\ga}(\te)+\sup_{\te\in\tte} D^{(32)}_{N,\ga}(\te)
\end{equation}
where 
$$D^{(31)}_{N,\ga}(\te)=d\left(\nu_\te, \frac{1}{T}\int_0^T\delta_{Y^\te_{t}}dt\right)\quad\text{ and }\quad D^{(32)}_{N,\ga}(\te)=d\left(\frac{1}{T}\int_0^T\delta_{Y^\te_{t}}dt, \frac{1}{T}\int_0^T\delta_{Y^\te_{\underline{t}}}dt\right).$$
We will now handle those two terms separately:\\
\textbf{Step 1}: Bound on $D^{(32)}_{N,\ga}$.
As in Section \ref{subsection:second_term}, since $d\in\mathcal{D}_1$ whenever $d=d_s$ or $d=d_{\ccf,p}$, we have
\begin{equation}
\sup_{\te\in\tte}D^{(32)}_{N,\ga}(\te)\leq \frac{1}{T}\int_0^T\sup_{\te\in\tte}|Y^\te_{t}-Y^\te_{\underline{t}}|dt.
\end{equation}
We now proceed as in Section \ref{subsection:second_term} in order to get the equivalent  of \eqref{eq:majo_jensen_term2} thanks to Jensen's inequality. We get 
\begin{equation}\label{eq:majo_jensen_term3}
\ES\left[\sup_{\te\in\tte}|D^{(32)}_{N,\ga}(\te)|^q\right]\leq \frac{1}{T}\int_0^T\ES\left[\sup_{\te\in\tte}|Y^\te_{t}-Y^\te_{\underline{t}}|^q\right]dt.
\end{equation}
In order to bound the right hand side of \eqref{eq:majo_jensen_term3}, we start by recalling the bound \eqref{eq:c1} for $Y^\te_t-Y^\te_{\underline{t}}$:
$$|Y^\te_t-Y^\te_{\underline{t}}|\leq\int_{\underline{t}}^t|b_\te(Y^\te_s)|ds+\|\sigma\||B_t-B_{\underline{t}}|.$$
The drift term above is now bounded thanks to the sublinear growth of $b_\te$ given by \eqref{eq:inward-weak} and the uniform bound on the $L^q$ moments of $Y_t^\te$ given by Proposition \ref{prop:controlmoment}. As far as the term $|B_t-B_{\underline{t}}|$ is concerned , we obviously have thanks to \eqref{eq:var-increments-fbm} and the fact that $|t-\underline{t}|\leq\ga$:
$$\ES\left[|B_t-B_{\underline{t}}|^q\right]\leq C_H\ga^{qH}.$$
From here, it is readily checked that 
\begin{equation*}
\ES\left[\sup_{\te\in\tte}|Y^\te_t-Y^\te_{\underline{t}}|^q\right]\leq C_H~\ga^{qH}.
\end{equation*}
Plugging this information into \eqref{eq:majo_jensen_term3} we end up with
\begin{equation}\label{eq:term3_2}
\ES\left[\sup_{\te\in\tte}|D^{(32)}_{N,\ga}(\te)|^q\right]\leq C_q\ga^{qH}.
\end{equation}
\textbf{Step 2}: Bound on $D^{(31)}_{N,\ga}(\te)$ for a fixed $\te$.
For a fixed value of $\te\in\tte$, the term $D^{(31)}_{N,\ga}(\te)$ will be handled similarly to Section \ref{subsection:first_term}. Namely, along the same lines as for relation \eqref{eq:proof_term1} we write
\begin{equation}\label{eq:proof_term2}
D^{(31)}_{N,\ga}(\te)\leq d\left(\nu_\te, \frac{1}{T}\int_0^T\ES[\delta_{Y^\te_{t}}]dt\right)+d\left(\frac{1}{T}\int_0^T\ES[\delta_{Y^\te_{t}}]dt, \frac{1}{T}\int_0^T\delta_{Y^\te_{t}}dt\right).
\end{equation}
Then the first term in the right hand side of \eqref{eq:proof_term2} is handled exactly as \eqref{eq:term1_traj} in Section \ref{subsection:first_term}, which yields
\begin{equation}\label{eq:term3_traj}
\left|\frac{1}{T}\int_0^T\ES[f(Y^\te_{t})]dt-\nu_{\te}(f)\right|\leqslant\frac{C}{T}\|f\|_{\rm Lip}.
\end{equation}
The second term in the right hand side of \eqref{eq:proof_term2} can be upper bounded thanks to a continuous time version of Lemma \ref{lem:moment_concentration} (also based on \cite[Theorem 2.3]{Var} and left to the reader for the sake of conciseness). We get 
\begin{equation}\label{eq:moment_concentration_T}
\ES\left[\left| \frac{1}{T}\int_{0}^{T}\left(f(Y^\te_{t})-\ES[f(Y^\te_{t})]\right)dt\right|^q~\right]\leqslant C_q\|f\|_{\rm Lip}^q T^{-\frac{q}{2}(2-(2H\vee1))}.
\end{equation}
Therefore putting together \eqref{eq:term3_traj} and \eqref{eq:moment_concentration_T} and arguing as in Section \ref{subsection:first_term}, we get that for $d=d_{\ccf,p}$ with $p>(q+d)/2$ or $d=d_s$ and for any $\te\in\tte$,
\begin{equation}\label{eq:term3_1}
\ES\left[|D^{(31)}_{N,\ga}(\te)|^q\right]=\ES\left[d\left(\nu_\te, \frac{1}{T}\int_0^T\delta_{Y^\te_{t}}dt\right)^q~\right]\leqslant C_q\left(T^{-q}+T^{-\frac{q}{2}(2-(2H\vee1))}\right)
\end{equation}
where $C_q$ is a positive constant which does not depend on $\te$.\\
\textbf{Step 3}: Bound on $\sup_{\te\in\tte}D^{(31)}_{N,\ga}(\te)$. In order to gor from \eqref{eq:term3_1} to a bound for the supremum over $\tte$, we proceed to a discretization of the parameter space $\tte$ as in Section \ref{subsection:main_consistency_results}. Towards this aim, we will use the following notation: for any $\te\in\tte$, we set
\begin{equation}\label{eq:def_phi}
\varphi(\te):=d\left(\nu_\te, \frac{1}{T}\int_0^T\delta_{Y^\te_{t}}dt\right).
\end{equation}
Let $\varepsilon>0$ and recall that $\tte^{(\varepsilon)}:=\{\te_{i}^{(\varepsilon)}~|~1\leqslant i \leqslant M_\varepsilon\}$ is defined at the beginning of Subsection \ref{subsection:main_consistency_results} in such a way that $\tte\subset \bigcup_{i=1}^{M_\varepsilon}B(\te_{i}^{(\varepsilon)},\varepsilon)$.
Then, for any $\te\in\tte$,
\begin{equation*}
\varphi(\te)\leqslant |\varphi(\te)-\varphi(\te^{(\varepsilon)})|+|\varphi(\te^{(\varepsilon)})|
\end{equation*}
where $\te^{(\varepsilon)}$ is defined by \eqref{eq:discretize-parameter-space}. Therefore
\begin{equation*}
\varphi(\te)\leq|\varphi(\te)-\varphi(\te^{(\varepsilon)})|+\max\limits_{1\leq i\leq M_\varepsilon}|\varphi(\te_i^{(\varepsilon)})|
\end{equation*}
and finally
\begin{align}\label{eq:disc_te}
\ES\left[\sup_{\te\in\tte}\varphi(\te)^q\right]&\leq c_q\ES\left[\sup_{\te\in\tte}|\varphi(\te)-\varphi(\te^{(\varepsilon)})|^q\right]+c_q\ES\left[\max\limits_{1\leq i\leq M_\varepsilon}|\varphi(\te_i^{(\varepsilon)})|^q\right]\nonumber\\
&\leq c_q\ES\left[\sup_{\te\in\tte}|\varphi(\te)-\varphi(\te^{(\varepsilon)})|^q\right]+c_q\sum_{i=1}^{M_\varepsilon}\ES\left[|\varphi(\te_i^{(\varepsilon)})|^q\right].
\end{align}
Owing to inequality \eqref{eq:term3_1} for a fixed $\te\in\tte$, we can deduce from \eqref{eq:disc_te} that
\begin{equation}\label{eq:disc_te2}
\ES\left[\sup_{\te\in\tte}\varphi(\te)^q\right]\leq c_q\ES\left[\sup_{\te\in\tte}|\varphi(\te)-\varphi(\te^{(\varepsilon)})|^q\right]+c'_q M_\varepsilon\left(T^{-q}+T^{-\frac{q}{2}(2-(2H\vee1))}\right).
\end{equation}
In the remainder of the step, we thus focus on the first right hand term in \eqref{eq:disc_te2}. Namely we will show the existence of an integrable random variable $\zeta^{}_T>0$ such that for all $\te_1,\te_2\in\tte$ we have
\begin{equation}\label{eq:varphi_lip}
|\varphi(\te_1)-\varphi(\te_2)|\leq \zeta^{}_T|\te_1-\te_2|\quad a.s.
\end{equation}
For this purpose, let us split the quantity $|\varphi(\te_1)-\varphi(\te_2)|$ in two terms
\begin{equation}\label{eq:varphi_lip2}
|\varphi(\te_1)-\varphi(\te_2)|\leq d\left(\nu_{\te_1},\nu_{\te_2}\right)+d\left( \frac{1}{T}\int_0^T\delta_{Y^{\te_1}_{t}}dt, \frac{1}{T}\int_0^T\delta_{Y^{\te_2}_{t}}dt\right).
\end{equation}
Then, one can show the following inequalities for any $d\in\mathcal{D}_1$
\begin{eqnarray}
&d\left(\nu_{\te_1},\nu_{\te_2}\right)\leq C|\te_1-\te_2|\sup_{\te\in\tte}\nu_\te\left(|\cdot|^{r}\right)\label{eq:nu_te_lip}\\
&d\left( \frac{1}{T}\int_0^T\delta_{Y^{\te_1}_{t}}dt,\frac{1}{T}\int_0^T\delta_{Y^{\te_2}_{t}}dt\right)\leq C|\te_1-\te_2|\left(\frac{1}{T}\int_0^T\sup_{\te\in\tte}|Y^\te_s|^{r}ds\right) \label{eq:mes_oc_lip}
\end{eqnarray}
where $r$ is given in assumption $\HUNS$ and where equation \eqref{eq:nu_te_lip} is obtained by following the proof of Proposition \ref{prop:contracttt} (see appendix \ref{append:B}) for the stationary solutions $\bar{Y}^{\te_1}$ and $\bar{Y}^{\te_2}$. Plugging \eqref{eq:nu_te_lip} and \eqref{eq:mes_oc_lip} into \eqref{eq:varphi_lip2}, we obtain that \eqref{eq:varphi_lip} holds true with $$\zeta^{}_T:=C\max\left\{\sup_{\te\in\tte}\nu_\te\left(|\cdot|^{r}\right),~\frac{1}{T}\int_0^T\sup_{\te\in\tte}|Y^\te_s|^{r}ds\right\}.$$
By Proposition \ref{prop:controlmoment} $(i)$, we get $\sup_{T>0}\ES[\zeta^{~q}_T]<+\infty$. Then plugging \eqref{eq:varphi_lip} into \eqref{eq:disc_te2} we end up with
\begin{equation}\label{eq:majo_eps}
\ES\left[\sup_{\te\in\tte}\varphi(\te)^q\right]\leq c_q\sup_{T>0}\ES[\zeta^{~q}_T]\varepsilon^q+c'_q M_\varepsilon\left(T^{-q}+T^{-\frac{q}{2}(2-(2H\vee1))}\right).
\end{equation} 
For all $\varepsilon>0$, since $\tte$ is a compact of $\R^d$, we can choose $M_\varepsilon\leq \frac{C_{\tte}}{\varepsilon^{d}}$ and then if we choose $\varepsilon:=T^{-\eta}$ for some $\eta>0$, we finally get
\begin{align*}
\ES\left[\sup_{\te\in\tte}\varphi(\te)^q\right]&\leq c_q\sup_{T>0}\ES[\zeta^{~q}_T]T^{-q\eta}+c'_q C_{\tte}\left(T^{-q+d\eta}+T^{-\frac{q}{2}(2-(2H\vee1))+d\eta}\right)\nonumber\\
&\leq c_q\sup_{T>0}\ES[\zeta^{~q}_T]T^{-q\eta}+c'_q C_{\tte}T^{-\frac{q}{2}(2-(2H\vee1))+d\eta}.
\end{align*}
It just remains to optimize in $\eta$ to conclude that:
\begin{equation}\label{eq:disc_te_final}
\ES\left[\sup_{\te\in\tte}\varphi(\te)^q\right]\leq C_qT^{-\tilde{\eta}}
\end{equation}
with $\tilde{\eta}:=\frac{q^2}{2(q+d)}(2-(2H\vee1))$.
By putting together \eqref{eq:majo_term3} with both \eqref{eq:term3_2} and \eqref{eq:disc_te_final}, this concludes the proof of \eqref{eq:final_term3}.
\end{proof}

Let us now conclude the section. Through inequality \eqref{eq:majo_d_nute0_nuhatte} and the control of the three right hand side terms, namely \eqref{eq:final_term1}, \eqref{eq:final_term2} and \eqref{eq:final_term3}, we are in position to conclude that Theorem \ref{thm:rate_of_convergence} holds true.

\section{Identifiability assumption}\label{sec:identif-inv-measure}
In this section we will provide some examples of equations of the form \eqref{eq:sde} for which the crucial assumptions \eqref{eq:I_w} and \eqref{eq:I_s} are satisfied. We first review briefly the diffusion case in Section \ref{subsection:diffusion_case}, and then give a particular example in the fractional Brownian motion case in Section \ref{subsection:fBm_case}.

\subsection{Case of a diffusion process}\label{subsection:diffusion_case}

In this section we consider equation \eqref{eq:sde} in the case $H=\frac{1}{2}$, that is when the equation is driven by a $d$-dimensional Wiener process. Our considerations are summarized in the following proposition.
\begin{proposition} Consider equation \eqref{eq:sde} in the case $H=\frac{1}{2}$. We assume Hypothesis $\HZERO$ and $\HUNW$ to be met and call $\nu_\te$ the invariant measure corresponding to the coefficient $b_\te$. We pick $\te_1,\te_2\in\tte$ and set 
$$F={\rm Span}\{\nabla f, \textnormal{ for all }f\in {\cal C}^2(\ER^d,\ER)\;\textnormal{with compact support}\}$$
where $F$ is considered as a subspace of $L^2(\nu_{\te_1})$. We assume that $b_{\te_1}-b_{\te_2}$ is not an element of $F^{\perp}$ in $L^2(\nu_{\te_1})$. Then
$$d(\nu_{\te_1},\nu_{\te_2})>0.$$
If this condition is satisfied for all couples $(\te_1,\te_2)\in\tte^2$, then $\HDEUX$ holds true.
\end{proposition}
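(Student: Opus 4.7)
The plan is to leverage the Fokker--Planck characterization of the invariant measure that is available in the diffusion setting $H=1/2$. For each $\te \in \tte$ the measure $\nu_{\te}$ satisfies
$$
\int_{\R^d} \lp \langle b_{\te}(x), \nabla f(x) \rangle + \tfrac{1}{2} \mathrm{Tr}\lp \sigma \sigma^\top \nabla^{2} f(x) \rp \rp \, d\nu_{\te}(x) = 0
$$
for every compactly supported $f \in \mathcal{C}^{2}(\R^d, \R)$. This identity is classical for elliptic diffusions with Lipschitz drift; one proves it by applying It\^o's formula to $f(\bar{Y}^{\te}_{t})$, where $\bar{Y}^{\te}$ denotes the stationary solution given by Proposition~\ref{prop:conv-eq-TV}, then taking expectations (the stochastic integral is a genuine martingale since $\nabla f$ is bounded and $\sigma$ is constant), and finally exploiting the fact that $\E[f(\bar{Y}^{\te}_{t})]$ does not depend on $t$.

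I would then argue by contraposition. Assume $d(\nu_{\te_{1}}, \nu_{\te_{2}}) = 0$. Since every $d \in \distdomi$ metrizes weak convergence, this forces $\nu_{\te_{1}} = \nu_{\te_{2}}$. Writing the invariance relation above for both parameters and subtracting, the second-order contributions cancel because $\sigma$ is independent of $\te$. One therefore obtains
$$
\int_{\R^d} \langle b_{\te_{1}}(x) - b_{\te_{2}}(x), \nabla f(x) \rangle \, d\nu_{\te_{1}}(x) = 0
\qquad \text{for all } f \in \mathcal{C}^{2}_c(\R^d, \R).
$$
By linearity in $f$, this identity extends to every element of $F$, which precisely expresses that $b_{\te_{1}} - b_{\te_{2}}$ lies in $F^{\perp}$ inside $L^{2}(\nu_{\te_{1}})$, contradicting the hypothesis. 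Hence $d(\nu_{\te_{1}}, \nu_{\te_{2}}) > 0$.

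Two routine verifications underpin this scheme. First, one should check that $b_{\te_{1}} - b_{\te_{2}}$ actually belongs to $L^{2}(\nu_{\te_{1}})$, so that the orthogonality notion makes sense; this follows from the at-most linear growth of $b_{\te}$ guaranteed by \eqref{eq:inward-weak} combined with the finiteness of all polynomial moments of $\nu_{\te_{1}}$, itself inherited from \eqref{eq:controleuniverselmoment2} via Fatou's lemma. Second, the final $\HDEUX$ claim is immediate: applying the statement just proved to every pair $(\te_{1},\te_{2}) \in \tte^{2}$ with $\te_{1}\neq \te_{2}$ yields $\nu_{\te_1}\neq \nu_{\te_2}$. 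The only delicate point is the Fokker--Planck identity itself, but once it is granted the proof reduces to a one-line orthogonality argument; I do not anticipate a serious technical obstacle.
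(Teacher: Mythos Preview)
Your proposal is correct and follows essentially the same route as the paper: both invoke the Fokker--Planck (generator) characterization of $\nu_\te$, subtract the two invariance identities so that the $\sigma$-dependent second-order terms cancel, and read off that $b_{\te_1}-b_{\te_2}\in F^{\perp}$ under the assumption $\nu_{\te_1}=\nu_{\te_2}$. Your version is in fact slightly more detailed than the paper's (you spell out the contraposition, the $L^2(\nu_{\te_1})$ membership of $b_{\te_1}-b_{\te_2}$, and the justification of the invariance identity via It\^o's formula), but the underlying idea is identical.
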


\begin{proof} For $\te\in\tte$, let $L_\te$ denote the linear operator defined on ${\cal C}^2(\ER^d,\ER)$ by:
$$L_\te f(x)=\langle \nabla f, b_\te\rangle (x)+\frac{1}{2}( \sigma^*D^2 f(x) \sigma), $$
where $D^2 f$ denote the Hessian matrix of $f$. By a classical criterion, $\nu_\te$ is invariant for \eqref{eq:sde} when $H=\frac{1}{2}$ if and only if 
$\nu_\te(L_\te f)=0$ for any $f\in {\cal C}^2(\ER^d,\ER)$ with compact support. As a consequence, 
$\nu_{\te_1}=\nu_{\te_2}$ if and only if $\nu_{\te_1}((L_{\te_1}-L_{\te_2}) f)=0$ for any compactly supported ${\cal C}^2$-function $f:\ER^d\rightarrow\ER$. Now observe that
 $$\nu_{\te_1}((L_{\te_1}-L_{\te_2}) f)=\int \langle\nabla f(x),b_{\te_1}(x)-b_{\te_2}(x)\rangle \nu_{\te_1}(dx).$$
 The result follows.
\end{proof}
In other words, this result says that in the diffusion setting, the identifiability assumption is true if ${\rm Proj}_F(b_\te-b_{\te_0})$ is not the null function for any $\te\neq\te_0$. Notice that this is always true  in the one-dimensional case or if $b_\te$ is a gradient. Unfortunately, the generalization of this simple characterization to SDEs driven by fBm is far from being straightforward.

\subsection{Fractional Brownian motion case}\label{subsection:fBm_case}
In this section we wish to check $\HTROIS$ for some specific examples of equation \eqref{eq:sdetheta} and for the distance $\dcf$. Specifically, we shall consider a family $Y^{\te,\lambda}$ of real valued processes defined by
\begin{equation}\label{eq:def-Y-theta-alpha}
d Y^{\lambda,\te}_{t}
=
\lc  -\te \, Y^{\lambda,\te}_{t} + \lambda b_{\te}(Y^{\lambda,\te}_{t}) \rc \, dt + \si \, d B_{t}
\end{equation}
where $B$ is a $1$-dimensional fractional Brownian motion.
In equation \eqref{eq:def-Y-theta-alpha} the quantity $\lambda$ is a small enough parameter, which is assumed to be known. The estimation procedure is still for $\te$ only. The coefficient $b_{\te}$ is bounded together with its derivatives with respect to $y$ and $\te$. The process $Y^{\te,\lambda}$ has to be seen as a small perturbation of a fractional Ornstein-Uhlenbeck process with parameter $\te$. We also assume that $\te$ is a $1$-dimensional parameter and:
\begin{equation}\label{hyp:compact-theta}
\te\in[m,M],
\quad\text{with}\quad
0<m<M<\infty.
\end{equation}

Let us start our analysis by the case $X^{\te}\equiv Y^{0,\te}$, that is the fractional Ornstein-Uhlenbeck process itself, solution of the following equation:
\begin{equation}\label{eq:def-fou}
d X^{\te}_{t}
=
  -\te \, X^{\te}_{t}  \, dt + \si \, d B_{t}.
\end{equation}
It is easily seen that $X^\te$ is a centered Gaussian process whose variance is given (see e.g. \cite[p.724]{Ha}) by
\begin{equation}\label{eq:variance_fou}
\ES[(X^\te_t)^2]=2\sigma^2e^{-\te t}\int_0^ts^{2H-1}\cosh(\te(t-s))ds.
\end{equation}
In this case our assumption $\HTROIS$ is easily satisfied, as shown in the following lemma.

\begin{lemma}\label{lem:dcf-fou} Let $\te\in[m,M]$ as in \eqref{hyp:compact-theta}, and consider the fractional Ornstein-Uhlenbeck process $X^\te$ defined by \eqref{eq:def-fou}. We call $\mu_\te$ its invariant measure. Then for all $\te_1,\te_2\in[m,M]$, we have
\begin{equation}\label{eq:low-bnd-dcf-fou}
\dcf(\mu_{\te_{1}}, \mu_{\te_{2}})
\ge
c_{m,M,H} \, |\te_{1}-\te_{2}|.
\end{equation}
\end{lemma}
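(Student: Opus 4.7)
The plan is to exploit the fact that both $\mu_{\te_1}$ and $\mu_{\te_2}$ are centered Gaussian measures with explicit variances, so the distance $\dcf$ reduces to a one-dimensional integral of a difference of Gaussian densities.

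First, I would identify $\mu_\te$ explicitly. Since $X^\te$ is a centered Gaussian process, $\mu_\te$ is a centered Gaussian on $\R$ with variance $\sigma_\te^2:=\lim_{t\to\infty}\ES[(X_t^\te)^2]$. Writing $\cosh(\te(t-s))=\tfrac12(e^{\te(t-s)}+e^{-\te(t-s)})$ in \eqref{eq:variance_fou} and letting $t\to\infty$, the term $e^{-\te(2t-s)}$ vanishes and one gets
\begin{equation*}
\sigma_\te^2=\sigma^2\int_0^\infty s^{2H-1}e^{-\te s}\,ds=\sigma^2\,\Gamma(2H)\,\te^{-2H}.
\end{equation*}
Hence the characteristic function of $\mu_\te$ is $\varphi_\te(\xi)=\exp(-\sigma_\te^2\xi^2/2)$.

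Next, setting $a_i:=\sigma_{\te_i}^2/2$ for $i=1,2$, the definition \eqref{eq:def_CFp} yields
\begin{equation*}
\dcf(\mu_{\te_1},\mu_{\te_2})^2=\int_{\R}\bigl(e^{-a_1\xi^2}-e^{-a_2\xi^2}\bigr)^2 g_p(\xi)\,d\xi.
\end{equation*}
By the mean value theorem applied to $a\mapsto e^{-a\xi^2}$, for each $\xi$ there exists $c=c(\xi)$ between $a_1$ and $a_2$ with $(e^{-a_1\xi^2}-e^{-a_2\xi^2})^2=(a_1-a_2)^2\xi^4 e^{-2c\xi^2}$. Since $\te\in[m,M]$ forces $a_i\in[A_-,A_+]$ with $A_\pm$ depending only on $m,M,H,\sigma$, one has $e^{-2c\xi^2}\ge e^{-2A_+\xi^2}$ and consequently
\begin{equation*}
\dcf(\mu_{\te_1},\mu_{\te_2})^2\ge (a_1-a_2)^2\int_\R \xi^4 e^{-2A_+\xi^2}g_p(\xi)\,d\xi = \kappa_{m,M,H,\sigma,p}\,(a_1-a_2)^2,
\end{equation*}
where $\kappa_{m,M,H,\sigma,p}>0$ since the integrand is positive and integrable (the Gaussian factor tames $\xi^4$ and $g_p$ is integrable).

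Finally, a second application of the mean value theorem to the $\mathcal{C}^1$ map $\te\mapsto \te^{-2H}$ on $[m,M]$ gives
\begin{equation*}
|a_1-a_2|=\tfrac{\sigma^2\Gamma(2H)}{2}\bigl|\te_1^{-2H}-\te_2^{-2H}\bigr|\ge \sigma^2\Gamma(2H)\,H\,M^{-2H-1}\,|\te_1-\te_2|,
\end{equation*}
and combining the two lower bounds produces the desired estimate \eqref{eq:low-bnd-dcf-fou} with an explicit constant $c_{m,M,H}$ depending only on $m,M,H$ (and the fixed $\sigma,p$). There is essentially no obstacle in this argument; the only care needed is to ensure that both MVT constants are uniform in $(\te_1,\te_2)\in[m,M]^2$, which is granted by the compactness hypothesis \eqref{hyp:compact-theta} and the fact that $0<m$.
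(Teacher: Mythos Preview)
Your proof is correct and follows essentially the same approach as the paper: both identify $\mu_\te$ as the centered Gaussian $\mathcal{N}(0,c_H\te^{-2H})$, write $\dcf^2$ as an integral of the squared difference of Gaussian characteristic functions against $g_p$, and conclude from there. The paper simply states that the claim ``is easily proved'' from this integral representation, whereas you carry out explicitly the two mean-value-theorem steps (on $a\mapsto e^{-a\xi^2}$ and on $\te\mapsto\te^{-2H}$) together with the uniform bounds afforded by $\te\in[m,M]$; this is precisely the natural way to fill in that gap.
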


\begin{proof}
It is well-known (see e.g \cite{BK}) that for the fractional Ornstein-Uhlenbeck process we have $\mu_{\te}=\cn(0,\si_{\te}^{2})$, with
\begin{equation}\label{eq:variance_mes-inv_fou}
\si_{\te}^{2} = \frac{c_{H}}{\te^{2H}}.
\end{equation}
Taking expression \eqref{eq:def_CFp} into account, this yields
\begin{equation*}
\dcf^{2}(\mu_{\te_{1}}, \mu_{\te_{2}})
=
\int_{\R}\left[\exp\lp -\frac{c_{H}}{2\te_{1}^{2H}}\xi^{2}\rp - \exp\lp-\frac{c_{H}}{2\te_{2}^{2H}}\xi^{2}\rp \right]^2g_p(\xi)d\xi,
\end{equation*}
from which our claim \eqref{eq:low-bnd-dcf-fou} is easily proved. Notice that the fact that $\te$ is bounded away from $0$ is crucial here in order to ensure the continuity of $\te\mapsto\sigma_\te$ in \eqref{eq:variance_mes-inv_fou} on the interval $[m,M]$.
\end{proof}

Let us also state an elementary bound on ordinary differential equations for further use.

\begin{lemma}\label{lem:easy-ode}
Let $f,g:\R_{+}\to\R$ be two functions such that there exist some constants $\ka,M>0$ satisfying
\begin{equation}\label{a1}
f_{r} \ge \ka,
\quad\text{and}\quad
|g_{r}| \le M,
\quad\text{for all } r\in\R_{+}.
\end{equation}
Let $y$ be the solution of the following differential equation:
\begin{equation}\label{eq:linear-ode}
\dot{y}_{t} + f_{t} \, y_{t} = g_{t}.
\end{equation}
Then $y$ is uniformly bounded in $t$ and verifies
\begin{equation*}
|y_{t}| \le \frac{M}{\ka}.
\end{equation*}
\end{lemma}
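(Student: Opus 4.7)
The plan is to treat \eqref{eq:linear-ode} as a scalar linear first order ODE and solve it explicitly by the integrating factor method, then propagate the uniform bounds \eqref{a1} through the resulting Duhamel formula. Concretely, I would set $F(t):=\int_0^t f_s\,ds$, notice that multiplying \eqref{eq:linear-ode} by $e^{F(t)}$ turns the left hand side into $\frac{d}{dt}\bigl(e^{F(t)}y_t\bigr)$, and integrate from $0$ to $t$ to obtain
\begin{equation*}
y_t = e^{-F(t)}\,y_0 + \int_0^t e^{-(F(t)-F(s))}\,g_s\,ds.
\end{equation*}

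The two hypotheses in \eqref{a1} then feed in transparently: from $f_r\ge\kappa$ one deduces $F(t)-F(s)\ge\kappa(t-s)$ for every $0\le s\le t$, and from $|g_s|\le M$ the integrand is pointwise bounded in modulus by $M\,e^{-\kappa(t-s)}$. Taking absolute values and performing the elementary integration yields
\begin{equation*}
|y_t|\le |y_0|\,e^{-\kappa t}+M\int_0^t e^{-\kappa(t-s)}\,ds= |y_0|\,e^{-\kappa t}+\frac{M}{\kappa}\bigl(1-e^{-\kappa t}\bigr),
\end{equation*}
which gives the uniform bound $|y_t|\le M/\kappa$ as soon as $y_0=0$ (the natural initial condition when $y$ appears as a parameter derivative of a process starting from a fixed point, which is the intended use in the identifiability section). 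In full generality one still obtains $|y_t|\le\max(|y_0|,M/\kappa)$ uniformly in $t\ge0$, and $\limsup_{t\to\infty}|y_t|\le M/\kappa$, which is sufficient for the subsequent applications.

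There is essentially no obstacle here: the whole argument is just the explicit Duhamel representation combined with the two uniform bounds of \eqref{a1}. An alternative energy proof, which I would mention as a remark, is to set $w_t:=y_t^2$, use $\dot w_t=2y_t(g_t-f_ty_t)\le -2\kappa w_t+2M\sqrt{w_t}$, and observe that $\dot w_t<0$ whenever $\sqrt{w_t}>M/\kappa$, so that the level $M^2/\kappa^2$ cannot be exceeded if the initial value lies below it; this reproves the same bound without invoking the explicit formula.
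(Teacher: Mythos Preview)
Your argument is correct and is exactly the paper's approach: solve \eqref{eq:linear-ode} explicitly via the integrating factor, obtaining $y_t=\int_0^t\exp\bigl(-\int_s^t f_r\,dr\bigr)g_s\,ds$, and then plug in the bounds \eqref{a1}. You are in fact slightly more careful than the paper, which silently drops the initial-condition term; your observation that the stated bound $M/\kappa$ requires $y_0=0$ (and that otherwise one gets $\max(|y_0|,M/\kappa)$) is accurate and matches the intended application to parameter derivatives of $Y^{\lambda,\te}$.
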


\begin{proof}
Equation \eqref{eq:linear-ode} admits an explicit solution under the form
\begin{equation*}
y_{t} 
=
\iot 
\exp\lp -\ist f_{r} \, dr  \rp 
g_{s} \, d s.
\end{equation*}
Plugging the bounds \eqref{a1} into the above expression, we easily get
\begin{equation*}
|y_{t}| \le
M \iot e^{-\ka(t-s)} \, d s 
\le
\frac{M}{\ka},
\end{equation*}
which is our claim.
\end{proof}

We now wish to extend Lemma \ref{lem:dcf-fou} to the model given by equation \eqref{eq:def-Y-theta-alpha}. Namely we wish to prove the following proposition.
\begin{proposition}\label{prop:dcf-perturbed-fou} 
Let $Y^{\lambda,\te}$ be the process defined by \eqref{eq:def-Y-theta-alpha} and consider $p>3/2$. We assume $\te\in[m,M]$ and $\lambda\in(0,\lambda_0)$ with a small enough $\lambda_0=\lambda_0(m,M,p)$. Also assume (without loss of generality) that $b_\te$, $\partial_yb_\te$, $\partial_\te b_\te$, $\partial^2_{\te y}b_\te$ are all bounded by $1$. Then the following lower bound holds true for any $\te_1,\te_2\in[m,M]$:
\begin{equation}\label{eq:low-bnd-dcf-perturbed-fou}
\dcf(\nu_{\te_{1}}, \nu_{\te_{2}})
\ge
c_{m,M,H} \, |\te_{1}-\te_{2}|.
\end{equation}
\end{proposition}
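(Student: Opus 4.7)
The plan is to view $Y^{\lambda,\te}$ as a perturbation of the fractional Ornstein-Uhlenbeck process and propagate the explicit lower bound of Lemma \ref{lem:dcf-fou}. Setting $\phi_\te^\lambda(\xi):=\int e^{i\xi x}\,\nu_\te(dx)$, $\phi_\te^0(\xi):=\int e^{i\xi x}\,\mu_\te(dx)$, and $D(\te)(\xi):=\phi_\te^\lambda(\xi)-\phi_\te^0(\xi)$, the reverse triangle inequality in $L^2(g_p\,d\xi)$ yields
\begin{equation*}
\dcf(\nu_{\te_1},\nu_{\te_2}) \ge \dcf(\mu_{\te_1},\mu_{\te_2}) - \|D(\te_1)-D(\te_2)\|_{L^2(g_p)}.
\end{equation*}
By Lemma \ref{lem:dcf-fou} the first term on the right is at least $c_{m,M,H}|\te_1-\te_2|$, so it is enough to show that $\te\mapsto D(\te)$ is Lipschitz on $[m,M]$ with Lipschitz constant $C\lambda$ in the $L^2(g_p)$ norm; the conclusion \eqref{eq:low-bnd-dcf-perturbed-fou} then follows by choosing $\lambda_0\le c_{m,M,H}/(2C)$.

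To control this Lipschitz constant I would couple the two dynamics by driving them with the same fBm and work with the stationary solutions $\bar Y^{\lambda,\te}$ and $\bar X^\te$. The difference $W_t^{\lambda,\te}=\bar Y^{\lambda,\te}_t-\bar X^\te_t$ solves the pathwise linear ODE $\dot W_t+\te W_t = \lambda\,b_\te(\bar Y^{\lambda,\te}_t)$, and $|b_\te|\le1$ combined with Lemma \ref{lem:easy-ode} gives the deterministic bound $|W_t^{\lambda,\te}|\le\lambda/m$. Differentiating formally in $\te$, the tangent processes $V^{\lambda,\te}=\partial_\te\bar Y^{\lambda,\te}$ and $U^\te=\partial_\te\bar X^\te$ satisfy the linear variational equations
\begin{equation*}
\dot V + (\te-\lambda\partial_y b_\te(\bar Y))V = -\bar Y + \lambda\partial_\te b_\te(\bar Y),\qquad \dot U + \te U = -\bar X.
\end{equation*}
For $\lambda<m$ the contraction rate $\te-\lambda\partial_y b_\te(\bar Y)\ge m-\lambda$ is strictly positive, hence variation of constants together with the stationary bounds on $\bar Y$ and $\bar X$ delivered by Proposition \ref{prop:controlmoment} produces uniform $L^p$-bounds on $V^{\lambda,\te}$ and $U^\te$. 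Subtracting the two equations, the forcing of $V^{\lambda,\te}-U^\te$ is $-W^{\lambda,\te}+\lambda\partial_\te b_\te(\bar Y)+\lambda\partial_y b_\te(\bar Y)V^{\lambda,\te}$, which is $O(\lambda)$ in $L^p(\Omega)$ by the preceding estimates; a second application of variation of constants gives $\|V^{\lambda,\te}_0-U^\te_0\|_{L^p}\le C\lambda$.

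To finish, I would write
\begin{equation*}
\partial_\te D(\te)(\xi)=i\xi\,\mathbf{E}\bigl[(e^{i\xi\bar Y^{\lambda,\te}_0}-e^{i\xi\bar X^\te_0})V^{\lambda,\te}_0\bigr]+i\xi\,\mathbf{E}\bigl[e^{i\xi\bar X^\te_0}(V^{\lambda,\te}_0-U^\te_0)\bigr],
\end{equation*}
apply $|e^{i\xi y}-e^{i\xi x}|\le|\xi||y-x|$ to the first term and $|e^{i\xi x}|=1$ to the second, and Cauchy-Schwarz. This yields the pointwise bound $|\partial_\te D(\te)(\xi)|\le C\lambda(|\xi|+|\xi|^2)$, so that
\begin{equation*}
\|\partial_\te D(\te)\|_{L^2(g_p)}^2 \le C\lambda^2\int_{\mathbb R}(|\xi|+|\xi|^2)^2 g_p(\xi)\,d\xi,
\end{equation*}
which gives $\|\partial_\te D(\te)\|_{L^2(g_p)}\le C\lambda$ whenever $p$ is large enough to make the integral finite, and hence the desired Lipschitz bound after integrating in $\te$.

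The main obstacle is the last step: one must propagate the $\te$-derivative through the characteristic function while keeping integrability against $g_p$. My naive bound requires $p>5/2$, whereas the statement assumes only $p>3/2$; closing this gap should use the sharper estimate $|e^{i\xi y}-e^{i\xi x}|\le 2\wedge|\xi||y-x|$, splitting the $\xi$-integral into low and high frequencies, or exploiting regularity of the density of $\bar Y^{\lambda,\te}_0$ to integrate by parts in $\xi$. The remaining ingredients (coupling, perturbation bounds via Lemma \ref{lem:easy-ode}, and linear tangent analysis) are essentially deterministic computations controlled by the strict positivity of $\te$ on the compact $[m,M]$ and the boundedness assumptions on $b_\te$ and its derivatives.
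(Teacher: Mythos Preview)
Your approach is essentially the paper's: both perturb the OU lower bound of Lemma~\ref{lem:dcf-fou} and control the correction through tangent processes bounded via Lemma~\ref{lem:easy-ode}. The paper works at a large finite time $t\sim\log(1/|\te_1-\te_2|^2)$ rather than in stationarity, absorbing the gap between $\mathcal L(Y_t^{\lambda,\te})$ and $\nu_\te$ into extra terms $I_{1j}$ handled by exponential ergodicity; this has the mild advantage that differentiability of $Y_t^{\lambda,\te}$ in $(\lambda,\te)$ is elementary ODE theory, whereas your $\partial_\te\bar Y^{\lambda,\te}$ requires a word of justification. To isolate the factor $\lambda|\te_1-\te_2|$, the paper uses a bilinear interpolation $a(r,\tau)$ over $[\te_2,\te_1]\times[0,\lambda]$ and writes the cross term as $\int_{[0,1]^2}\partial^2_{r\tau}\bigl[\psi_\xi(a(r,\tau))\bigr]\,dr\,d\tau$; your computation of $\partial_\te D(\te)$ is precisely the infinitesimal form of this identity and produces the same two contributions, namely a $|\xi|$-term multiplied by the mixed increment $\Delta_R Y$ and a $|\xi|^2$-term multiplied by a product of first differences.

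The obstacle you flag is genuine and is in fact shared by the paper's argument: the $\psi_\xi''$ contribution gives a factor $|\xi|^2$ \emph{before} squaring, so one is really integrating $|\xi|^4 g_p(\xi)$, which needs $p>5/2$ rather than the stated $p>3/2$. No idea is missing from your outline; the threshold in the statement appears to be a slip, and the refinements you propose are the natural way to try to lower it. One further point of care, again common to both proofs: the forcing in the equation for $\partial_\te Y^{\lambda,\te}$ contains the unbounded term $-Y^{\lambda,\te}$, so Lemma~\ref{lem:easy-ode} does not apply to that derivative verbatim; your $L^p$ route via Proposition~\ref{prop:controlmoment} is the appropriate way to close this step.
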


\begin{proof} Owing to the definition \eqref{eq:def_CFp} of the distance $d_{\ccf,p}$, we have
\begin{equation*}
\dcf(\nu_{\te_{1}}, \nu_{\te_{2}})
=
\left\{\int_{\R}\left(\be[e^{i \xi \by^{\lambda,\te_{1}}}]-\be[e^{i \xi \by^{\lambda,\te_{2}}}]\right)^2 \, g_p(\xi) \, d\xi\right\}^{1/2}
\end{equation*}
We will decompose this quantity as follows:
\begin{equation}\label{eq:fou1}
\dcf(\nu_{\te_{1}}, \nu_{\te_{2}})
\ge
I_{3}^{1/2} - \lp I_{2}^{1/2} +  I_{11}^{1/2} + I_{12}^{1/2} \rp,
\end{equation}
where the quantities $I_{1j}$ and $I_3$ are defined by
\begin{eqnarray*}
I_{1j}
&=&
\int_{\R}\left(\be\lc \exp\lp i \xi \by^{\lambda,\te_{j}} \rp\rc-\be\lc \exp\lp i \xi Y_{t}^{\lambda,\te_{j}} \rp\rc\right)^2 \, g_p(\xi) \, d\xi  \\
I_{3}
&=&
\int_{\R}\left(\be\lc \exp\lp i \xi X_{t}^{\te_{1}} \rp\rc-\be\lc \exp\lp i \xi X_{t}^{\te_{2}} \rp\rc\right)^2 \, g_p(\xi) \, d\xi ,
\end{eqnarray*}
and where we recall that the Ornstein-Uhlenbeck process $X^\te$ is given by \eqref{eq:def-fou}. In equation \eqref{eq:fou1}, we also have
\begin{multline*}
I_{2}
=
\int_{\R}\Big(\be\lc \exp\lp i \xi Y_{t}^{\lambda,\te_{1}} \rp\rc 
- \be\lc \exp\lp i \xi X_{t}^{\te_{1}} \rp\rc \\
-\be\lc \exp\lp i \xi Y_{t}^{\lambda,\te_{2}} \rp\rc 
+ \be\lc \exp\lp i \xi X_{t}^{\te_{2}} \rp\rc \Big)^2
\, g_p(\xi) \, d\xi .
\end{multline*}
In the definitions above, $t$ is an arbitrarily large time, to be determined later on. Our goal is now to lower bound $I_{3}$ and upper bound $I_{1,j}$ and $I_{2}$.

\paragraph{\emph{Lower bound for $I_{3}$.}} In order to lower bound $I_3$, we proceed as in Lemma \ref{lem:dcf-fou}. Indeed, Lemma \ref{lem:dcf-fou} stems from a lower bound on
$$|\ES[e^{i\xi X^{\te_1}_{\infty}}]-\ES[e^{i\xi X^{\te_2}_{\infty}}]|,$$ while we are interested here in a lower bound on $$|\ES[e^{i\xi X^{\te_1}_{t}}]-\ES[e^{i\xi X^{\te_2}_{t}}]|,$$ for a fixed $t$. However it is readily checked from \eqref{eq:variance_fou} that there exists $t_0>0$ such that for all $t\geq t_0$ we have
\begin{equation}\label{eq:I3}
I_{3} \geq c_{1} \, |\te_{1}-\te_{2}|^{2}
\end{equation}
with a given constant $c_1>0$ depending on $m,M$.

\paragraph{\emph{Upper bound for $I_{1j}$.}} Recall that both $b_\te$ and $\partial_y b_\te$ are bounded by $1$. We also assume that $\lambda$ is small enough so that $\lambda\leq m(1-\varepsilon)$ with $\varepsilon>0$. Then it is readily checked that $x\mapsto -\te x+\lambda b_\te(x)$ satisfies the condition $\HUNS$. Hence one can see as in \eqref{eq:conv_exp} that $$I_{1j}\leq Ce^{-\frac{m\varepsilon}{2}t}.$$
If we wish to have $I_{1j} \leq c_{3} \, |\te_{1}-\te_{2}|^{2},$ with $c_3$ arbitrarily small, it is thus sufficient to pick $t\geq t_1$ with $t_1=C\log\left(\frac{1}{|\te_1-\te_2|^2}\right)$. In the sequel we choose this time $t_1$ such that 
\begin{equation}\label{eq:I11_I12}
I_{11}^{1/2}+I_{12}^{1/2}\leqslant \frac{\sqrt{c_1}}{4}|\te_1-\te_2|.
\end{equation}

\paragraph{\emph{Upper bound for $I_{2}$.}}
We start by recalling that $X^{\te}=Y^{0,\te}$. Next we set $R=[\te_{1},\te_{2}]\times[0,\lambda]$ and for $r,\tau\in[0,1]$ we define
\begin{equation}\label{a2}
a(r,\tau)
=
Y_{t}^{0,\te_{2}}
+ r \lp Y_{t}^{0,\te_{1}}-Y_{t}^{0,\te_{2}}  \rp
+\tau \lp Y_{t}^{\lambda,\te_{2}}-Y_{t}^{0,\te_{2}}  \rp
+r \tau \Delta_{R} Y_{t},
\end{equation}
where the rectangular increment $\Delta_{R} Y_{t}$ is given by
\begin{equation}\label{a3}
\Delta_{R} Y_{t}
=
Y_{t}^{\lambda,\te_{1}} - Y_{t}^{0,\te_{1}} - Y_{t}^{\lambda,\te_{2}} + Y_{t}^{0,\te_{2}}.
\end{equation}
Notice that $I_{2}$ can be expressed as
\begin{equation}\label{eq:I2}
I_{2}
=
\int_{\R} \Big(\be\lc \Delta_{R} \psi_{\xi}(Y_{t}) \rc \Big)^{2} \, g_p(\xi) \, d\xi,
\end{equation}
where $\psi_{\xi}$ is the oscillating function $e^{i \xi x}$ and $\Delta_{R}\psi_{\xi}(Y_{t})$ still denotes a rectangular increment as in \eqref{a3}. Moreover, resorting to the path $a$ introduced in \eqref{a2} we get
\begin{equation*}
\Delta_{R} \psi_{\xi}(Y_{t})
=
\int_{[0,1]^{2}} 
\partial^2_{r\tau}\left[\psi_{\xi}(a(r,\tau))\right]
d r d \tau,
\end{equation*}
and computing the differential $\partial^2_{r\tau}\left[\psi_{\xi}(a(r,\tau))\right]$ explicitly we get
\begin{equation}\label{eq:Delta_R}
\Delta_{R} \psi_{\xi}(Y_{t})
=
\int_{[0,1]^{2}} 
\lp \psi_{\xi}^{\prime}(a(r,\tau)) \partial_{r\tau}^{2} a(r,\tau) 
+  \psi_{\xi}^{\prime\prime}(a(r,\tau))  \partial_{r} a(r,\tau) \partial_{\tau} a(r,\tau) \rp 
d r d \tau .
\end{equation}
Taking into account \eqref{eq:Delta_R} and \eqref{eq:I2}, plus the fact that $\psi$ and its derivatives $\psi^\prime,~\psi^{\prime\prime}$ are bounded by  $|\xi|^2$ and $|\xi|^2g_{p}$ is integrable if $p>3/2$, we get that 
\begin{align}\label{eq:bound_I2}
I_2&\leq C_p\left\{\be\lc\left(|Y_t^{0,\te_2}-Y_t^{0,\te_1}|+|\Delta_{R}(Y_{t})|\right)\left(|Y_t^{\lambda,\te_2}-Y_t^{0,\te_2}|+|\Delta_{R}(Y_{t})|\right)\rc+\be\lc|\Delta_{R}(Y_{t})|\rc\right\}^2\nonumber\\
&\leq C_{p,m,M,\varepsilon}\be\left[\|\partial_{\lambda}Y^\te\|_{\infty}^2+\|\partial_{\te}Y^\te\|_{\infty}^2+\|\partial_{\lambda\te}^2Y^\te\|_{\infty}^2\right]^2\lambda^2|\te_1-\te_2|^2.
\end{align}
In order to bound the right hand side of \eqref{eq:bound_I2}, we are now reduced to the estimation of $\partial_{\lambda}Y^{\lambda,\te}$, $\partial_{\te}Y^{\lambda,\te}$ and $\partial_{\lambda\te}^{2}Y^{\lambda,\te}$. Let us thus show how to establish a bound for $\partial_{\lambda}Y^{\lambda,\te}$. To this aim, differentiating formally equation  \eqref{eq:def-Y-theta-alpha}, the process $\partial_{\lambda}Y^{\lambda,\te}$ solves a system of the form
\begin{equation}\label{eq:dif-Y-wrt-alpha}
\frac{d[ \partial_{\lambda}Y^{\lambda,\te}]_{t}}{dt}
=
\lc  -\te  + \lambda \partial_{y}b_{\te}(Y^{\lambda,\te}_{t}) \rc \partial_{\lambda}Y^{\lambda,\te}_{t}+b_\te(Y^{\lambda,\te}_t) .
\end{equation}
Whenever $\te$ satisfies \eqref{hyp:compact-theta} and $\lambda\leq m(1-\varepsilon)$ with $\varepsilon>0$, equation \eqref{eq:dif-Y-wrt-alpha} above fulfills the hypothesis of Lemma \ref{lem:easy-ode}. Hence we get
\begin{equation*}
|\partial_{\lambda}Y^{\te,\lambda}_{t}| \le c_{m,M,\varepsilon},
\end{equation*}
uniformly in $t\ge 0$. We let the reader check that the same kind of inequality holds true for $\partial_{\te}Y^{\lambda,\te}$ and $\partial_{\lambda\te}^{2}Y^{\lambda,\te}$ as well, and thus we get
$$\|\partial_{\lambda}Y^\te\|_{\infty}+\|\partial_{\te}Y^\te\|_{\infty}+\|\partial_{\lambda\te}^2Y^\te\|_{\infty}\leq c_{m,M,\varepsilon}.$$
Plugging this inequality into \eqref{eq:bound_I2}, we end up with 
\begin{equation*}
|I_{2}| \le C_{p,m,M,\varepsilon} \lambda^{2} \, |\te_{1}-\te_{2}|^{2}.
\end{equation*}
We now choose $\lambda$ such that $\lambda\leq\frac{1}{4}\left(\frac{c_1}{C_{p,m,M,\varepsilon}}\right)^{1/2}$, where $c_1$ is defined by \eqref{eq:I3} and $\varepsilon=1/2$. This yields
\begin{equation}\label{eq:bound_I2_bis}
I_2^{1/2}\leq \frac{\sqrt{c_1}}{4}|\te_1-\te_2|.
\end{equation}
We now gather \eqref{eq:bound_I2_bis} and \eqref{eq:I11_I12} into \eqref{eq:fou1}, which proves our claim with $\lambda_0=\frac{1}{4}\left(\frac{c_1}{C_{p,m,M,\varepsilon}}\right)^{1/2}$.
\end{proof}
\section{Numerical Discussions and Illustrations}\label{sec:simu}

In this section, we provide several numerical examples in order to illustrate our main results. To this end, we first investigate several numerical questions which are related to our theoretical results.

\paragraph{\textbf{Simulated data and Euler scheme}:} In order to test our results, we have chosen to simulate our observations. Nevertheless, the fractional SDE \eqref{eq:sde} cannot be simulated exactly, except in some particular cases. Therefore we have opted for a discretization procedure thanks to a simple first order Euler scheme with very small step $\underline{\gamma}$ (namely $\underline{\gamma}=10^{-3}$) in order to get a sharp approximation of the true process. 

Let us recall that in the additive setting of equation \eqref{eq:sde} the simple Euler scheme converges strongly to the true SDE, while this is not true in general in the multiplicative case (see $e.g.$ \cite{nourdineuler}). The convergence of the scheme can be checked for instance through  Proposition \ref{prop:rapdisccont}$(i)$, applied with constant step $\underline{\gamma}$. Furthermore, taking the expectation in Proposition \ref{prop:rapdisccont}$(i)$  leads to a marginal control of the $L^2$-distance between the Euler scheme and the true SDE (with same fBm) of order $\underline{\gamma}^{H}$ (independently of the horizon). This confirms that our approximation of the observations is reasonable when $H$ is not too small (getting a control for the uniform distance is more involved).  

Let us also recall that the increments of the fBm can be simulated  through the  Wood-Chan method  (see \cite{wood}), which is based on the embedding of the covariance matrix of the fractional increments in a symmetric circulant matrix (whose eigenvalues can be computed using the Fast Fourier Transform).  Therefore up to the approximation of the true SDE detailed above, we now assume that we are given a sequence $(Y_{k\underline{\gamma}})_{k\ge0}$, where $(Y_t)_{t\ge0}$ is a solution to  \eqref{eq:sdetheta} with a given $\theta_0$. Then we select from this path a subsequence of observations $(Y_{t_{k}})_{k=1}^n$ where $t_{k}=k\ga$, which means in particular that we assume $\ga$ to be of the form $k_0\underline{\gamma}$ with $k_0\in\mathbb{N}^*$.

\paragraph{\textbf{Computation of the distance between empirical measures}:} The theoretical construction of an estimator like \eqref{eq:firstdefesti} involves in practice the computation of the distance $d$ between the empirical measures of the observed process and of the Euler scheme, for a distance  $d\in{\cal D}_p$ as defined in \eqref{set_distances}. We briefly describe how to compute this kind of distance.

Whenever $d$ is the $p$-Wasserstein distance, an explicit computation of the distance $d$ in \eqref{eq:firstdefesti} is possible if the observation $Y$ is  1-dimensional. To this aim, one can use the following representation (see \cite{vallander}): if $\mu$ and $\nu$ are two one-dimensional probabilities with $c.d.f$ $F$ and $G$ respectively, then for all $p>0$ we have
$$
{\cal W}_p^p(\mu,\nu)=\int_0^1 |F^{-}(t) -G^{-}(t)|^p dt,
$$
where $F^{-}$ and $G^{-}$ denote the (left or right) pseudo-inverse of $F$ and $G$. Moreover, when $\mu=\sum_{i=1}^{n_1} p_i \delta_{x_i}$ and $\nu=\sum_{j=1}^{n_2} q_j\delta_{y_j}$, the computation of the right hand side above can be made explicit through a reordering (and using the fact that $F^{-1}$ and $G^{-1}$ are stepwise constant). In particular, when $n_1=n_2=n$ and $p_i=q_j=1/n$, the Wasserstein distance between $\mu$ and $\nu$ simply reads
\begin{equation}\label{exp:wp}
 {\cal W}_p^p(\mu,\nu)=\frac{1}{n}\sum_{i=1}^n |x_{(i)}-y_{(i)}|^p.
 \end{equation}
 We will use this representation in our simulations.
 
In higher dimension, the computation of the Wasserstein distance   generally requires approximation/optimization methods which are out of the scope of this paper. In this context it seems to be numerically simpler to work with an approximation of the distance $d_{\ccf,p}$ (defined in  \eqref{eq:def_CFp}), which is also used for our analysis of the rate of convergence. Such an approximation can be obtained by a standard discretization of the integral which appears in the definition  \eqref{eq:def_CFp}.

\paragraph{\textbf{Minimization of the distance with respect to $\te$}:} 
Eventually the implementation of our estimation procedure relies on an optimization problem in order to compute the argmin in~\eqref{eq:firstdefesti}. 
More specifically, in case the estimator is built with a constant step Euler scheme, this consists in minimizing the function 
\begin{equation}\label{calfd}
{\cal F}_d:\te \mapsto d\left(\frac{1}{n}\sum_{k=0}^{n-1} \delta_{Y_{t_k}}, \frac{1}{N}\sum_{k=0}^{N-1} \delta_{Z_{k\gamma}^{\te,\gamma}}\right).
\end{equation}
In this paper, we first use the naive approach which consists in evaluating the function on a (finite) grid and then computing the minimum on this finite set. This minimization algorithm is clearly restricted to a low dimensional setting.
%
Secondly, we use a stochastic optimization algorithm to explore an example in dimension two (in space) for the distance $d_{CF,p}$ which is easier to manage. More precisions are given after the one-dimensional case.

\paragraph{\textbf{Numerical illustrations}:} Let us now turn to some numerical tests, for which we consider two one-dimensional examples and one two-dimensional example. We begin with the classical

\noindent $\rhd$  Ornstein-Uhlenbeck (OU) process: We consider the process $X^{\te}$ defined by \eqref{eq:def-fou},
where $\te$ is assumed to sit in a compact interval of $(0,+\infty)$ like in \eqref{hyp:compact-theta}. Let us recall that this case is a toy example since the Gaussian linear structure of the OU-process allows to develop specific estimation methods (on this topic, see \cite{KL} or more recently \cite{xiao} and \cite{brouste}). The assumptions $\HZERO$ and $\HUNS$ are clearly satisfied, whereas $\HTROIS$ follows from Lemma \ref{lem:dcf-fou}. Using the strategy described in the first part of this section, we get a discretely observed path of $Y$ with the following parameters:
$$\te_0=2, \; \underline{\gamma}=10^{-3}, \; \ga= 10^{-2},\; n=3.10^4,$$
and different values of $H$. In Figure \ref{figOU1}, we depict the function ${\cal F}_d$ defined in \eqref{calfd} with $d=d_{CF,2}$ for $H=0.3$ and $H=0.7$ respectively. As in the next examples, the Euler scheme is computed with $N=n$ and $\gamma=10^{-2}$ as specified above. We remark that the function attains its minimum very close to the true value of $\te$.
\begin{figure}[htbp]
   \begin{minipage}[c]{.46\linewidth}
      \includegraphics[width=7cm, height=5cm]{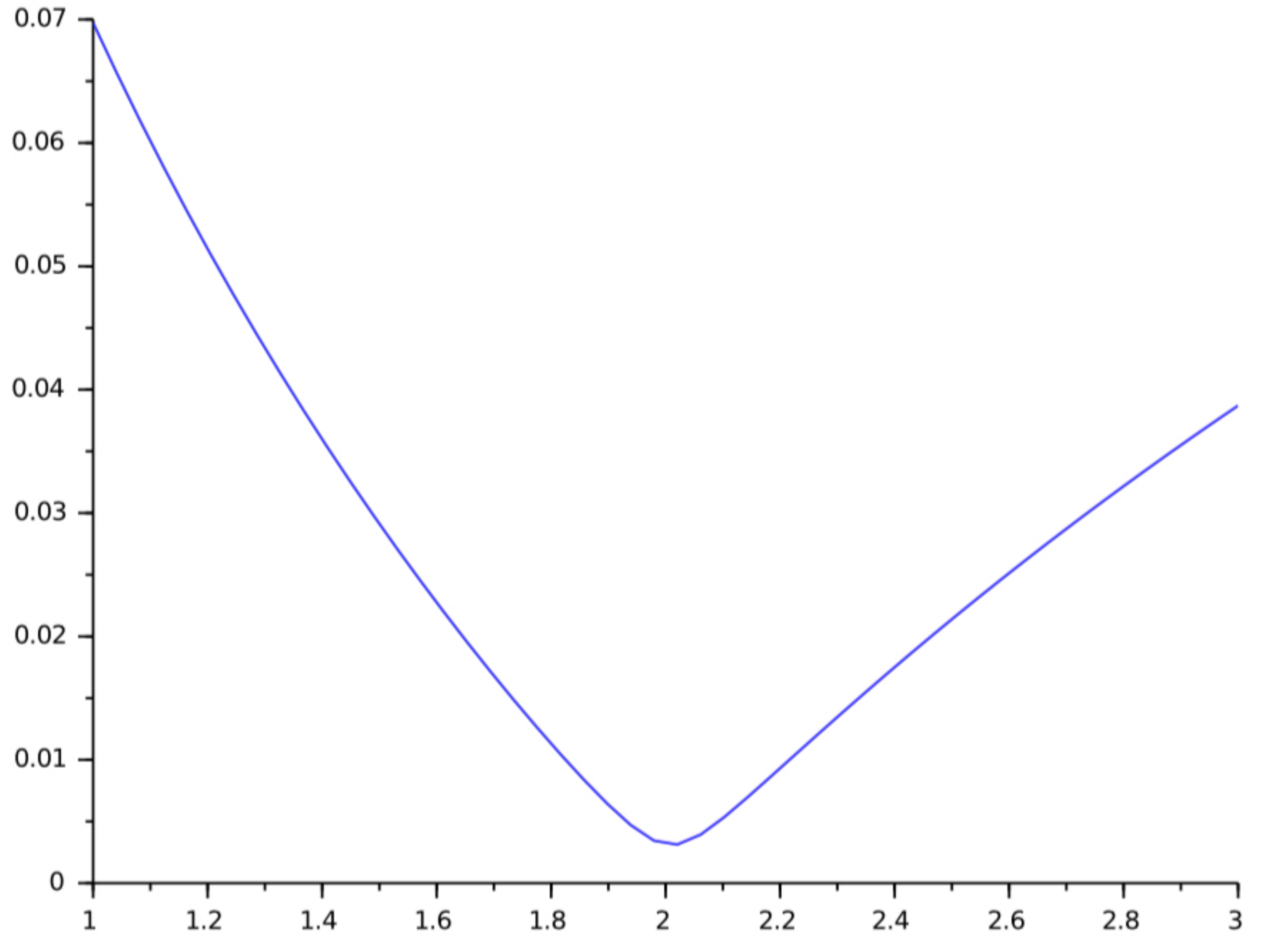}
  \end{minipage} \hfill
  \begin{minipage}[c]{.46\linewidth}
      \includegraphics[width=7cm,height=5cm]{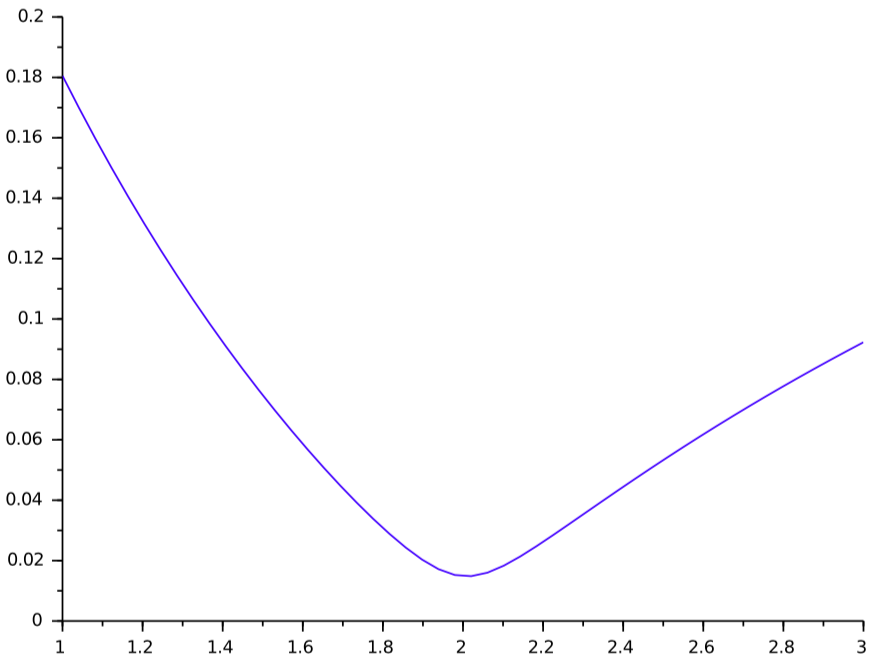}
  \end{minipage}
	\caption{$\te \mapsto {\cal F}_{d_{CF,2}}(\te)$ for  $H=0.3$ (left) and $H=0.7$ (right). }
	\label{figOU1} 
	\end{figure}
We also observe that the function ${\cal F}_d$ is more flat when $H$ is small, which is consistent with the fact that $H\mapsto \si_{\te}^{2}$ in \eqref{eq:variance_mes-inv_fou} is an increasing function. 

Figure \ref{figOU2} below is devoted to a comparison between the different $p$-Wasserstein distances as $p$ varies. Namely we fix $H=0.3$ and we compute  the function  ${\cal F}_d$ defined by \eqref{calfd} with $d={\cal W}_p$ for different values of $p$. Notice that in the 1-dimensional case we are considering we can resort to formula \eqref{exp:wp}, since we have chosen $N=n$. The true parameter is still $\te=2$. Our distances all perform correctly, although $p=4$ seems to yields a slightly sharper contrast.
\begin{figure}[htbp]
      \includegraphics[width=10cm, height=5cm]{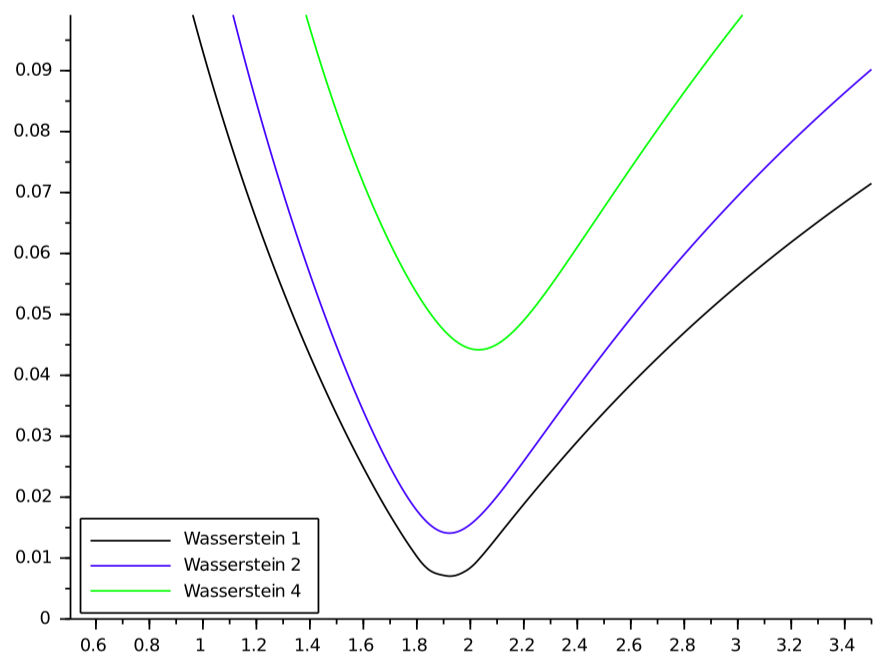}
	\caption{$\te \mapsto {\cal F}_{{\cal W}_p}(\te)$, $H=0.3$.  }
\label{figOU2}
	\end{figure}
	
\noindent $\rhd$ We now consider a second example with a non linear dependence in $\te$, namely an equation of the form:
 $$ d Y^{\te}_{t}
= - Y^{\te}_{t}(1+\cos(\te Y^\te)) dt +dB_t. $$ 
In this case, we only compute the Wasserstein distance for different values of $p$ with the same choices of parameters. Once again, the minimum of the function ${\cal F}_{{\cal W}_p}$
is attained close to $\te_0=2$. One also observes that the local behavior in the neighborhood of $\te_0$ is similar to the linear case.
\begin{figure}[htbp]
   \begin{minipage}[c]{.46\linewidth}
      \includegraphics[width=7cm, height=5cm]{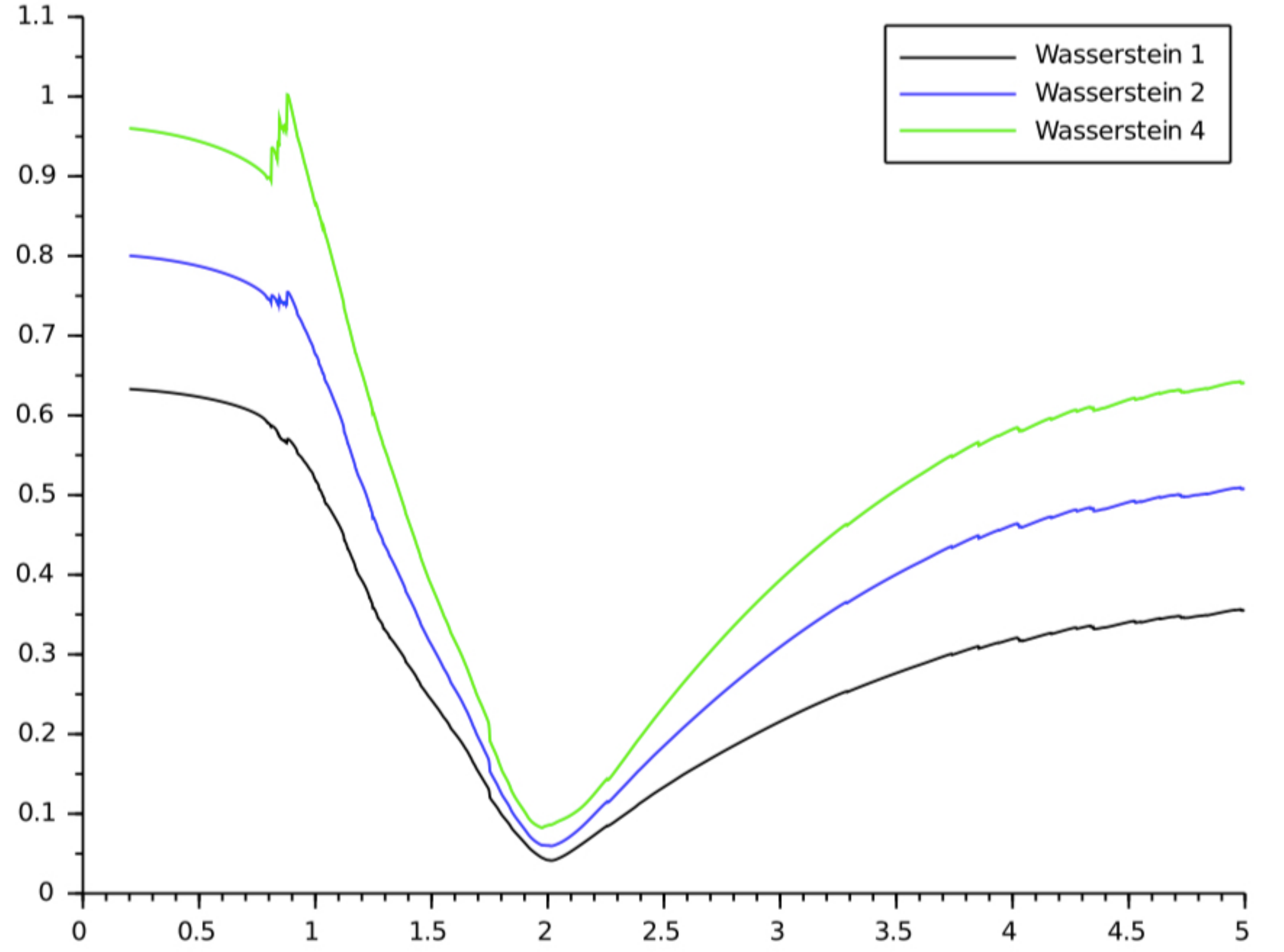}
  \end{minipage} \hfill
  \begin{minipage}[c]{.46\linewidth}
      \includegraphics[width=7cm,height=5cm]{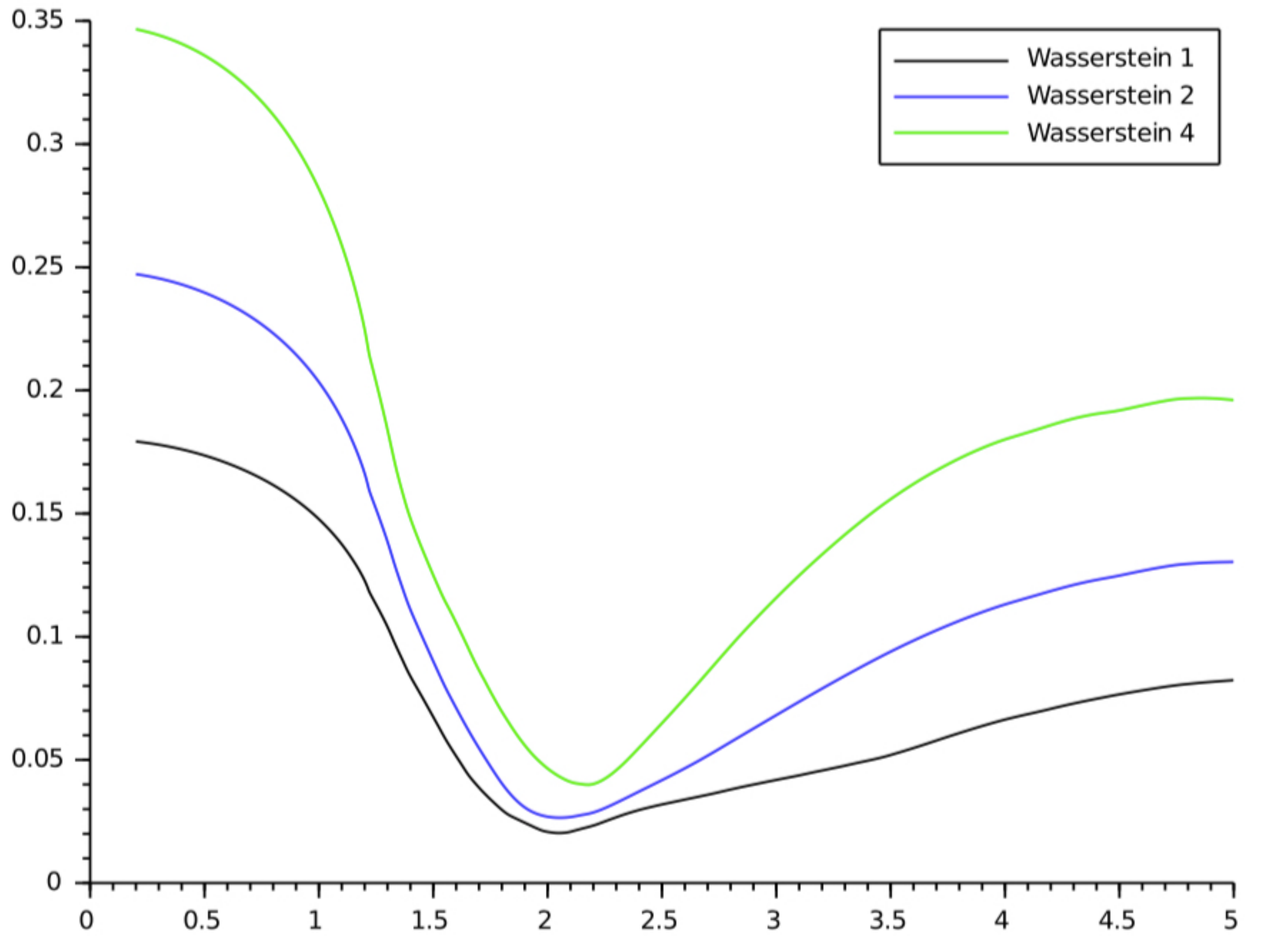}
  \end{minipage}
	\caption{$\te \mapsto {\cal F}_{{\cal W}_p}(\te)$ for  $H=0.3$ (left) and $H=0.7$ (right).  }
\label{figOU3}
	\end{figure}

\noindent $\rhd$ A two-dimensional example: we finally propose to focus on  a family of two-dimensional fractional processes $(Z^{\te}_{t})_{t\ge0}$ indexed by a real parameter $\te\in\Theta=[m,M]\subset(0,+\infty)$, where $m,M$ are two finite constants. Specifically,  $Z^{\te}$ is the solution to the stochastic system
\begin{equation}\label{eq:example-2d}
dZ^{\te}_t=b_\te(Z^\te_t) dt + dB_t^{H},
\end{equation}
where $(B_t^H)_{t\ge0}$ is a two-dimensional standard fBm with Hurst parameter $H$. In equation \eqref{eq:example-2d}, we also have that for all $\te\in\Theta$ the function 
$b_\te:\ER^2\rightarrow\ER^2$ is defined by:
\begin{equation}\label{eq:def-drift-2d-example}
\text{for all } z=(z_1,z_2)\in\ER^2,\quad 
b_\te(z)= -\te z\psi\lp ( 1+|z|^2)^{1/2}\rp+ \varepsilon z^\perp,
\end{equation}
where $z^\perp=(-z_2,z_1)$ and $\psi(x)=\frac{e^x}{1+e^x}$ (namely $\psi$ is the \textit{sigmoid} function). Let us first remark that the coefficient $b_{\te}$ above satisfies the main standing assumptions of the current paper and generalizes the framework of \cite{NT}. Indeed, the following holds true: (a) Due to the orthogonal component $z^\perp$ in equation \eqref{eq:def-drift-2d-example}, it is readily checked that $\partial_{z_2}b_{\te}^{1}\ne\partial_{z_1}b_{\te}^{2}$ (where we recall the notation of Section~\ref{subsection:notation} for the partial derivatives of $b$). Therefore our family of  drift functions $b_{\te}$ cannot be of gradient type.
(b) The functions $b_{\te}$ are obviously smooth. In addition, we have $\langle x_{1}^{\perp}-x_{2}^{\perp}, x_{1}-x_{2}\rangle=0$. 
Thus 
in order to see that $b_{\te}$ satisfies our Hypothesis~$\HUNS$, it is enough to check that the eigenvalues of the Jacobian matrix of $z\mapsto z\vp(|z|^2)$ are uniformly lower bounded by a positive constant, where we have set
\begin{equation*}
\vp(x) = \psi\lp (1+x)^{1/2}\rp, 
\quad\text{for}\quad x \ge 0,
\end{equation*}
and where $\psi$ is the sigmoid function featuring in our definition~\eqref{eq:def-drift-2d-example}.
We leave to the patient reader the elementary task of computing those eigenvalues. Let us just mention that a lower bound is given by $\vp(|z|^2)$, which is uniformly lower bounded by $\frac12$ for all $z\in\R^{2}$.  This implies that Assumption $\HUNS$ is fulfilled for the coefficient $b_{\te}$.

The multi-dimensional setting leads to new numerical difficulties. The first of those problems is due to the fact that the exact computation of the Wasserstein distance in $\R^{2}$ is still possible but costly (about $n^3$ computations for an empirical measure based on $n$ points by the \textit{Hungarian algorithm}), while approximations  lead to numerical issues which are out of the scope of this paper (see $e.g.$ \cite{schmitz}). We have thus decided to work with the distance $d_{\ccf,p}$ introduced in Section~\ref{sec:rateofconvergence}, which is simpler to compute. The second numerical problem we are facing comes from the underlying spatial dimension. 
Indeed, when $d=2$ one can still optimize over the parameter $\te$ by computing the contrast given by \eqref{eq:firstdefesti} over a discrete grid of parameters $\te$. However, it is clear that in larger dimensions such contrast estimators require to implement associated convex optimization algorithms. 

\noindent With the above preliminary considerations in mind, we now briefly introduce  a stochastic optimization algorithm for the approximation of 
$\hat{\te}_{N,n,\ga}$ related to $d_{\ccf,p}$.  We will then implement this method on the coefficient $b_{\te}$ given by \eqref{eq:def-drift-2d-example}. Here, the parameter $\te$ is still one-dimensional but the method which is proposed can be adapted to the higher dimension. In order to describe our algorithm, let us first 
recast our expression \eqref{eq:def_CFp} for the $d_{\ccf,p}$ distance. That is, observe that if $f_\xi(x):=e^{i\langle \xi,x\rangle}$ and if
$\Xi$ is a random variable with density $g_p$ then for two probability measures $\mu_1$ and $\mu_2$ we have
$$
d_{\ccf,p}(\mu_1,\mu_2)
=\ES\lc |\mu_1(f_\Xi)-\mu_2(f_\Xi)|^2 \rc.
$$
Now recall from relation~\eqref{eq:firstdefesti} that we are dealing with the following empirical measures:
$$
\mu=\frac{1}{n}\sum_{k=0}^{n-1}\delta_{Y_{t_k}}\quad \textnormal{and}\quad \mu_\te=\frac{1}{N}\sum_{k=0}^{N-1}\delta_{Z_{k\ga}^{\te,\ga}}.
$$
Then our aim in~\eqref{eq:firstdefesti}  is to minimize a functional $F$ defined on $\tte=[m,M]$ by
$$ 
F(\te)=d_{\ccf,p}(\mu,\mu_\te)
=\ES\lc |\mu(f_\Xi)-\mu_\te(f_\Xi)|^2 \rc.
$$
Our gradient descent algorithm will thus be applied to the functional $F$, and we now provide some details about its computation. Indeed, the gradient $\nabla F$ is formally obtained, similarly to what we did in the proof of Proposition~\ref{prop:dcf-perturbed-fou}, as
\begin{equation}\label{f1}
\nabla F(\te)=\ES[\Lambda(\te,\Xi)],
\quad\text{where}\quad
\Lambda(\te,\xi)=\partial_\te \left(|\mu(f_\xi)-\mu_\te(f_\xi)|^2\right).
\end{equation}
Furthermore, it is easily seen that $\Lambda(\te,\xi)$ can be expressed as
\begin{align}\label{f2}
\Lambda(\te,\xi)=2(\mu_\te-\mu)(\cos(\langle \xi,\cdot\rangle))\rho_\te(-\sin(\langle \xi,\cdot\rangle))+2(\mu_\te-\mu)(\sin(\langle \xi,\cdot\rangle))\rho_\te(\cos(\langle \xi,\cdot\rangle)) ,
\end{align}
where we resort to the following notation for a given function $g:\ER\rightarrow\ER$, 
\begin{equation}\label{f3}
\rho_\te(g(\langle \xi,.\rangle))=\frac{1}{N}\sum_{k=0}^{N-1} g(\langle \xi, Z_{k\ga}^{\te,\ga}\rangle)\, \langle \xi, \partial_\te Z_{k\ga}^{\te,\ga}\rangle. 
\end{equation}
Plugging \eqref{f3} into \eqref{f2} and then \eqref{f1}, one observes that the computation of  $\nabla F(\te)$ mostly relies on our ability to evaluate $(\partial_\te Z_{k\ga}^{\te,\ga})_{k\ge0}$. Now it should be highlighted that this quantity can be computed \emph{on the fly}, since $\partial_\te Z_{0}^{\te,\ga}=0$ and
for all $k\ge 0$ one can evaluate $\partial_\te Z_{(k+1)\ga}^{\te,\ga}$ recursively as
$$
\partial_\te Z_{(k+1)\ga}^{\te,\ga}=\partial_\te Z_{k\ga}^{\te,\ga}+\gamma \left(\partial_\te b_{\te}(Z_{k\ga}^{\te,\ga})+\nabla_z b_\te(Z_{k\ga}^{\te,\ga}) \partial_\te Z_{k\ga}^{\te,\ga}\right).
$$

The convergence of the gradient descent algorithm to a minimum of $F$ lies outside the scope of the current paper (see $e.g.$ \cite{duflo} for background on this topic). Let us just mention that 
the algorithm is recursively defined by
\begin{equation}\label{eq:def-step-gradient-descent}
\theta_{\ell+1}=\theta_\ell-\eta_\ell \, \Lambda(\theta_\ell,\Xi_{\ell+1}),
\end{equation}
where $(\eta_\ell)_{\ell}$ is a sequence of (positive) steps and $(\Xi_\ell)_{\ell\ge1}$ is a sequence \emph{i.i.d} random variables with common distribution $g_p$. 
Notice that in dimension $2$ the random variables $\Xi_{\ell}$ can be simulated through a change of variable. More precisely, if $R$ and $\Theta$ are two independent variables such that $R$ has density $q_p(r)=\frac{2(p-1) r}{(1+r^2)^p}$ and $\Theta$ has uniform distribution on $[0,2\pi]$, then $(R\cos (\Theta), R\sin(\Theta))$ has density $g_p$. Furthermore, thanks to the inversion method we get that  $R$ can be simulated as $R=(U^{1/(1-p)}-1)^{1/2}$ where $U$ has uniform distribution on $[0,1]$.

The illustration represented in Figure \ref{ex:dim2} is obtained with $\varepsilon=1/10$ in equation \eqref{eq:def-drift-2d-example} and $\eta_\ell= \gamma_0 (1+\ell)^{-1/2}$ in relation \eqref{eq:def-step-gradient-descent}.  We have also considered two values of $\gamma_0$ and two different starting points $y_{0}$ for the process $Z^{\te}$ in \eqref{eq:example-2d}. 
Furthermore, in these simulations we used a \textit{mini-batch} approach. This means that in equation \eqref{eq:def-step-gradient-descent} we replace the random simulated term $\Lambda(\theta_\ell,\Xi_{\ell+1})$ by an average over $m=30$ simulations. The mini-batch procedure is known  to reduce the randomness of the algorithm. The results displayed in Figure~\ref{ex:dim2} show that our approach yields a good convergence of the estimate to the true parameter $\te$. However, our simulations also reveal that the gradient gets quite flat near the true parameter $\te_0=1$. Therefore the algorithm moves very slowly after a large number of iterations. This is the reason why in the illustration, the algorithm is stopped after $n=100$ simulations. 
Also notice that we have provided several examples of initial values $y_{0}$ and step sizes $\gamma_0$, which exemplifies the sensibility of the current version of the algorithm with respect to the parameters. Those first attempts to combine our statistical procedure with gradient descent algorithms would certainly need to be enriched with deeper considerations, although this task is deferred to a subsequent publication for sake of conciseness.

\begin{figure}[htbp]
\includegraphics[height=6cm]{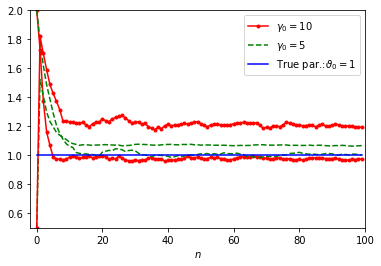}
\caption{Example in dimension $2$.  }
\label{ex:dim2}
\end{figure}

\newpage

\appendix
\section{Tightness and convergence of the occupation measures}
In this Appendix, we first show some uniform estimates for $Y^\te$ and $Z^{\te,\ga}$ and their respective occupation measures, as well as some convergence result for the occupation measure of $Y^\te$. Those results are all used in the proof of Proposition \ref{prop:ergodicSDE} and \ref{prop:ergodicEuler}.

\subsection{Moment estimates}
We start by bounding the moments of $Y^\te$ and its occupation measure.
\begin{proposition}\label{prop:controlmoment} Let $Y^\te$ be the unique solution of \eqref{eq:sdetheta}. Assume $\HZERO$ and $\HUNW$. Then the following inequalities hold true for $p\geq1$.
\begin{itemize}
\item[$(i)$]$\underset{t\geqslant0}{\sup}~\be[\underset{\te\in\tte}{\sup}~|Y^\te_t|^p]<+\infty.$
\smallskip
\item[$(ii)$]$\underset{\te\in\tte}{\sup}~\underset{n\geqslant1}{\sup}~\frac{1}{n}\sum_{k=0}^{n-1}|Y^\te_{k\kappa}|^{p}<+\infty$ a.s.
\item[$(iii)$]$\underset{\te\in\tte}{\sup}~\underset{t>0}{\sup}~\frac{1}{t}\int_0^t|Y^\te_{t}|^{p}dt<+\infty$ a.s.
\end{itemize}
\end{proposition}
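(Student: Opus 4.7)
The plan is to prove $(i)$ by a Lyapunov-type energy estimate on the shifted process $\tilde Y_t := Y_t - \sigma B_t$, and then to upgrade the $L^p$-bounds obtained in $(i)$ to the pathwise time-averaged bounds $(ii)$ and $(iii)$ via the ergodic theorem for the Markovian lift of $Y^{\te}$ constructed in \cite{Ha}.

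For $(i)$, I will work with $\tilde Y$, which solves the random ODE $\dot{\tilde Y}_t = b_\te(\tilde Y_t + \sigma B_t)$. The coercivity inequality of $\HUNW$ applied to the pair $(Y_t,\sigma B_t)$ will give $\langle \tilde Y_t, b_\te(Y_t) - b_\te(\sigma B_t)\rangle \le \beta - \alpha|\tilde Y_t|^2$, and the Lipschitz bound combined with the continuity of $\te \mapsto b_\te(0)$ on the compact $\tte$ will yield $|b_\te(\sigma B_t)| \le C_0(1+|B_t|)$ with $C_0$ uniform in $\te$. Together with a weighted Young inequality this will produce a differential inequality of the form $\tfrac{d}{dt}|\tilde Y_t|^2 \le C_1 - \tfrac{\alpha}{2}|\tilde Y_t|^2 + C_2|B_t|^2$ with $\te$-free constants. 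Integrating yields the pathwise upper bound $|\tilde Y_t|^2 \le e^{-\alpha t/2}|y_0|^2 + C\int_0^t e^{-\alpha(t-s)/2}(1+|B_s|^2)\,ds$; raising it to the $p/2$-th power (via Jensen on the integral term, or by differentiating $|\tilde Y_t|^p$ directly with the same scheme), taking expectation and using $\E[|B_s|^q] = C_q s^{qH}$, the exponentially decaying kernel makes the resulting integral bounded uniformly in $t$. Since the constants are $\te$-independent, one can bring $\sup_\te$ inside the expectation; the triangle inequality $|Y_t|^p \le 2^{p-1}(|\tilde Y_t|^p + |\sigma B_t|^p)$ then yields $(i)$.

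For $(ii)$ and $(iii)$, once $(i)$ is available, applying it to a stationary solution $\bar Y^{\te}$ (whose existence under $\HUNW$ is guaranteed by \cite{Ha}) and using Fatou will give $\sup_\te \nu_\te(|\cdot|^p) < \infty$. The infinite-dimensional Markovian lift of $Y^{\te}$ constructed in \cite{Ha} is ergodic under $\HUNW$, so Birkhoff's theorem (together with the irreducibility-type arguments of \cite{Ha} needed to accommodate the deterministic initial condition $y_0$) will give $\tfrac{1}{t}\int_0^t |Y_s^{\te}|^p\,ds \to \nu_\te(|\cdot|^p)$ a.s. The same argument applied to the discrete-time Markov chain sampled on the grid $(k\kappa)_{k\ge 0}$ yields the discrete convergence $\tfrac{1}{n}\sum_{k=0}^{n-1}|Y_{k\kappa}^{\te}|^p \to \nu_\te(|\cdot|^p)$ a.s. Since these time averages are continuous in $t$ (resp.\ well-defined for each $n$) and converge a.s.\ to a finite limit, their suprema over $t>0$ (resp.\ $n\ge 1$) are a.s.\ finite, which gives the $\sup_t$ and $\sup_n$ parts of $(ii)$ and $(iii)$.

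The main obstacle will be the uniformity in $\te$ in $(ii)$ and $(iii)$: a priori both the convergence rate and the ergodic limit $\nu_\te(|\cdot|^p)$ depend on $\te$, so exchanging $\sup_\te$ with the almost-sure limit is not automatic. To resolve this, I will verify that the constants in Hairer's coupling construction depend only on the uniform parameters $\alpha,\beta,L,C$ of $\HUNW$ and are therefore $\te$-uniform; this is compatible with the uniformity already claimed in Proposition \ref{prop:conv-eq-TV}. Combined with the compactness of $\tte$ and the $\te$-equicontinuity of $\bar Y^{\te}$ — inherited from an extension of Proposition \ref{prop:bnd-l2norm-deviation-Y-te} to stationary solutions via the same Gronwall estimate — this will yield the $\sup_\te$ appearing in the statements of $(ii)$ and $(iii)$.
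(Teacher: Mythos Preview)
Your argument for $(i)$ contains a genuine error. After integrating the differential inequality you obtain
\[
\ES\bigl[|\tilde Y_t|^2\bigr]\le e^{-\alpha t/2}|y_0|^2 + C\int_0^t e^{-\alpha(t-s)/2}\bigl(1+\ES[|B_s|^2]\bigr)\,ds,
\]
and you then claim the exponentially decaying kernel keeps the integral bounded in $t$. It does not: since $\ES[|B_s|^2]=s^{2H}$, the change of variable $u=t-s$ gives
\[
\int_0^t e^{-\alpha(t-s)/2}s^{2H}\,ds=\int_0^t e^{-\alpha u/2}(t-u)^{2H}\,du \ \ge\  (t/2)^{2H}\int_0^{t/2}e^{-\alpha u/2}\,du,
\]
which grows like $t^{2H}$. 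The decaying kernel only neutralizes a \emph{bounded} source term; your source $|B_s|^2$ is unbounded in mean. This is precisely why the paper (following \cite{Ha}) compares $Y^\te$ not with $\sigma B$ but with the fractional Ornstein--Uhlenbeck process $X$ solving $dX_t=-X_t\,dt+\sigma\,dB_t$: the OU process absorbs the fBm growth and satisfies $\sup_t\ES[|X_t|^q]<\infty$, so the analogous Gronwall estimate on $|Y^\te_t-X_t|^2$ produces a source term $1+|X_s|^{2r}$ with bounded moments, and the argument goes through.

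Your plan for $(ii)$ and $(iii)$ also has a gap, one the paper explicitly flags in Section~\ref{proof:weakconvergenceqsjkl}: Birkhoff's theorem for the Markovian lift yields convergence only $\bar\nu_\te$-almost surely, and there is no easy argument showing that a deterministic $y_0$ lies in the good set; invoking ``irreducibility-type arguments of \cite{Ha}'' does not settle this. The paper sidesteps both this issue and the uniformity-in-$\te$ problem in one stroke: it compares $Y^\te$ pathwise to the \emph{single, $\te$-independent} OU process $X$, uses the ergodicity of the stationary Gaussian sequence $(\tilde X_{k\kappa})_k$ (which is elementary and holds for any initial condition after exponential coupling), and then the comparison inequality \eqref{eq:OU-SDE_comparison} transfers the pathwise Ces\`aro bound from $X$ to $Y^\te$ with $\te$-uniform constants. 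Your proposed route through $\te$-dependent ergodic limits and equicontinuity of stationary solutions is at best much harder and, as written, incomplete.
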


\begin{proof} We treat our three items separately. 

$(i)$ The proof is done in \cite{Ha}. It is based on a comparison with the moments of the fractional Ornstein-Uhlenbeck process, similarly to what is done in step $(ii)$ below. Since the constants in $\HUNW$ are independent from $\te$, we get the uniformity with respect to $\te$.

$(ii)$ Let $p>0$. By Cauchy-Schwarz inequality it is enough to prove the result with $2p$ instead of $p$. Let us denote by $(X_t)_{t\geqslant0}$ the fractional Ornstein Uhlenbeck process defined by \eqref{eq:def-fou} with $\te=1$ and starting from $y_0$, i.e. 
\begin{equation}\label{eq:FOU1}
\left\{\begin{array}{l}
dX_t=-X_t~dt +  \si ~dB_t\\
X_0=y_0.
\end{array}\right.
\end{equation}
In this proof, we set $\underline{t}:=\inf\{k\kappa~|~k\kappa\leqslant t<(k+1)\kappa\}$ and $T:=n\kappa$. With this notations, similarly to what is done in the proof of Proposition \ref{prop:unifdecreas}, we have $$\frac{1}{n}\sum_{k=0}^{n-1}|Y^\te_{k\kappa}|^{2p}=\frac{1}{T}\int_{0}^T|Y^\te_{\underline{t}}|^{2p}dt.$$ 
Let us also denote by $\tilde{X}$ the stationary Ornstein-Uhlenbeck process related to \eqref{eq:FOU1}. Then the proof of item $(ii)$ is based on the following inequality:
\begin{equation}\label{eq:proof_moments_Y_emp}
\frac{1}{T}\int_{0}^T|Y^\te_{\underline{t}}|^{2p}dt\leqslant 3^{2p-1}\left(\frac{1}{T}\int_{0}^T|Y^\te_{\underline{t}}-X_{\underline{t}}|^{2p}dt+\frac{1}{T}\int_{0}^T|X_{\underline{t}}-\tilde{X}_{\underline{t}}|^{2p}dt+\frac{1}{T}\int_{0}^T|\tilde{X}_{\underline{t}}|^{2p}dt\right)
\end{equation}
Let us start by bounding the two last term in the right hand side of \eqref{eq:proof_moments_Y_emp}. First, since $(\tilde{X}_{k\kappa})_{k\geqslant0}$ is a stationary Gaussian sequence and since $\be[\tilde{X}_{\kappa}\tilde{X}_{(n+1)\kappa}]\underset{n\to+\infty}{\longrightarrow}0$ (according to \cite[Theorem 2.3]{CKM}), a criterion for Gaussian processes (see \cite{maruyama}) gives that $(\tilde{X}_{k\kappa})_{k\geqslant0}$ is ergodic. Therefore, we get
\begin{equation}\label{eq:ergodic_Xbar}
\lim\limits_{T\to+\infty}\frac{1}{T}\int_{0}^T|\tilde{X}_{\underline{t}}|^{2p}dt=\lim\limits_{n\to+\infty}\frac{1}{n}\sum_{k=0}^{n-1}|\tilde{X}_{k\kappa}|^{2p}=\be[|\tilde{X}_{\kappa}|^{2p}]<+\infty,
\end{equation}
where the limit holds in the almost sure sense.\\
In order to treat the second term in the right hand side of \eqref{eq:proof_moments_Y_emp}, we resort to equation \eqref{eq:FOU1}. From there it is readily checked that $\frac{d}{dt}|X_t-\tilde{X}_t|^2=-2|X_t-\tilde{X}_t|^2$, and thus $|X_t-\tilde{X}_t|^2=e^{-2t}|y_0-\tilde{X}_0|^2$.
In particular, we have $\lim\limits_{k\to+\infty}|X_{k\kappa}-\tilde{X}_{k\kappa}|^{2p}=\lim\limits_{k\to+\infty}e^{-2pk\kappa}|y_0-\tilde{X}_0|^{2p}=0$ and the corresponding Cesaro summation tends also to $0$, i.e.
\begin{equation}\label{eq:conv_X-Xbar}
\lim\limits_{T\to+\infty}\frac{1}{T}\int_{0}^T|X_{\underline{t}}-\tilde{X}_{\underline{t}}|^{2p}dt=\lim\limits_{n\to+\infty}\frac{1}{n}\sum_{k=0}^{n-1}|X_{k\kappa}-\tilde{X}_{k\kappa}|^{2p}=0.
\end{equation}
Now it remains to treat the first term in the right hand side of \eqref{eq:proof_moments_Y_emp}. To this end, we will invoke our hypothesis $\HUNW$ and a classical argument based on Gronwall's lemma. Namely, note that for all $\varepsilon>0$, 
\begin{align*}
\frac{d}{dt}|Y^\te_t-X_t|^2&=2\langle Y^\te_t-X_t, b_\te(Y^\te_t)+X_t\rangle\\
&=2\langle Y^\te_t-X_t, b_\te(Y^\te_t)-b_\te(X_t)\rangle+2\langle Y^\te_t-X_t, b_\te(X_t)+X_t\rangle\\
&\leqslant 2\beta-2\alpha|Y^\te_t-X_t|^2+\frac{1}{\varepsilon}|Y^\te_t-X_t|^2+\varepsilon\tilde{C}(1+|X_t|^{2r}),
\end{align*}
where the last inequality stems from $\HUNW$, Young's inequality and the linear growth of $b_\te$ entailed by \eqref{eq:inward-weak}.
We then set $\varepsilon=1/\alpha$, which gives
$$\frac{d}{dt}|Y^\te_t-X_t|^2\leqslant 2\beta-\alpha|Y^\te_t-X_t|^2+\frac{\tilde{C}}{\alpha}(1+|X_t|^{2r}).$$
Hence from Gronwall's lemma, we deduce that
\begin{equation}\label{eq:OU-SDE_comparison}
|Y^\te_t-X_t|^2\leqslant\int_{0}^te^{-\alpha(t-s)}\lp 2\beta+\frac{\tilde{C}}{\alpha}(1+|X_s|^{2r})\rp ds.
\end{equation}
Starting from \eqref{eq:OU-SDE_comparison}, one can easily get a bound on $|Y^\te_t-X_t|^{2p}$. Namely, due to the fact that $\int_{0}^t e^{-\alpha(t-s)}ds=\alpha^{-1}(1-e^{-\alpha t})$, a direct application of Jensen's inequality yields
\begin{align*}
|Y^\te_t-X_t|^{2p}&\leqslant\lp\frac{1-e^{-\alpha t}}{\alpha}\rp^{p-1}\int_{0}^te^{-\alpha(t-s)}\lp 2\beta+\frac{\tilde{C}}{\alpha}(1+|X_s|^{2r})\rp^p ds\\
&\leqslant C_p\int_{0}^te^{-\alpha(t-s)}(1+|X_s|^{2pr}) ds.
\end{align*}
Then, by using Fubini theorem, it comes
\begin{align}\label{eq:Y-X}
\frac{1}{T}\int_0^T|Y^\te_{\underline{t}}-X_{\underline{t}}|^{2p}dt&\leqslant \frac{C_p}{T}\int_{0}^T\int_{0}^{\underline{t}}e^{-\alpha(\underline{t}-s)}(1+|X_s|^{2pr}) ds~dt\nonumber\\
&\leqslant \frac{C_p}{T}\int_{0}^T(1+|X_s|^{2pr})\int_{s}^{T}{\bf{1}}_{[0,\underline{t}]}(s)e^{-\alpha(\underline{t}-s)} dt~ds.
\end{align}
In addition, for $t\in[0,T]$ we have ${\bf{1}}_{[0,\underline{t}]}(s)e^{-\alpha(\underline{t}-s)}\leqslant e^{\alpha\kappa}e^{-\alpha(t-s)}$. Hence integrating the right hand side of \eqref{eq:Y-X} we get
\begin{align*}
\frac{1}{T}\int_0^T|Y^\te_{\underline{t}}-X_{\underline{t}}|^{2p}dt\leqslant \frac{C_pe^{\alpha\kappa}}{\alpha}\frac{1}{T}\int_{0}^T(1+|X_s|^{2pr})\left(1-e^{-\alpha(T-s)} \right) ds\leqslant\frac{C_pe^{\alpha\kappa}}{\alpha}\lp1+\frac{1}{T}\int_{0}^T|X_s|^{2pr}ds\rp.
\end{align*}
Finally, by the same type of arguments used in the first part of the proof (see \eqref{eq:conv_X-Xbar}), we can show that $\lim\limits_{T\to+\infty}\frac{1}{T}\int_{0}^T|X_s|^{2pr}ds=\be[|\tilde{X}_0|^{2pr}]<+\infty$. Thus we have obtained
\begin{equation}\label{eq:final_Y-X}
\sup_{T>0}\sup_{\te\in\tte}\frac{1}{T}\int_0^T|Y^\te_{\underline{t}}-X_{\underline{t}}|^{2p}dt<+\infty.
\end{equation}
Plugging \eqref{eq:ergodic_Xbar}, \eqref{eq:conv_X-Xbar} and \eqref{eq:final_Y-X} into \eqref{eq:proof_moments_Y_emp}, our claim $(ii)$ is now proved.

$(iii)$ We let the patient reader check the details for item $(iii)$. It follows the same lines as item $(ii)$, except for the fact that the discretization procedure is avoided.
\end{proof}

Our next result is an analog of Proposition \ref{prop:controlmoment} for the Euler scheme $Z^{\te,\ga}$.

\begin{proposition}\label{prop:control_moment_Z} Let $Z^{\te,\ga}$ be the Euler approximation scheme defined by \eqref{eq:Euler_scheme}. Assume $\HZERO$ and $\HUNW$. Then, there exists $\ga_0>0$ such that for all $p>0$,
\begin{itemize}
\item[$(i)$] $\underset{\te\in\tte}{\sup}~\underset{\gamma\in(0,\gamma_0]}{\sup}~\limsup\limits_{N\to+\infty}~\be[|Z^{\te,\ga}_{N\gamma}|^{p}]<+\infty.$
\smallskip
\item[$(ii)$] $\underset{\gamma\in(0,\gamma_0]}{\sup}~\underset{N\geqslant0}{\sup}~\ga^{p(1-H)}\be[\underset{\te\in\tte}{\sup}|Z^{\te,\ga}_{N\gamma}|^{p}]<+\infty.$
\smallskip
\item[$(iii)$] Fix $\ga\in(0,1]$. Then $\underset{\te\in\tte}{\sup}~\underset{N\geqslant1}{\sup}~\frac{1}{N}\sum_{k=0}^{N-1}|Z^{\te,\ga}_{k\gamma}|^{p}<+\infty$ $a.s.$
\end{itemize}
\end{proposition}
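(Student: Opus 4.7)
The plan is to adapt the techniques used in Proposition \ref{prop:controlmoment} to the Euler-scheme setting, relying on the dissipativity encoded in \HUNW{} and the uniformity in $\te$ of all constants therein.

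For item $(i)$, I would start from the recursion \eqref{eq:Euler_scheme} and expand the squared norm:
\begin{equation*}
|Z^{\te,\ga}_{(k+1)\ga}|^2 = |Z^{\te,\ga}_{k\ga}|^2 + 2\ga\langle Z^{\te,\ga}_{k\ga}, b_\te(Z^{\te,\ga}_{k\ga})\rangle + \ga^2|b_\te(Z^{\te,\ga}_{k\ga})|^2 + 2\langle Z^{\te,\ga}_{k\ga} + \ga b_\te(Z^{\te,\ga}_{k\ga}), \si\Delta_k \tilde B\rangle + |\si\Delta_k \tilde B|^2,
\end{equation*}
where $\Delta_k \tilde B = \tilde B_{(k+1)\ga} - \tilde B_{k\ga}$. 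Applying \eqref{eq:inward-weak} with $y=0$ (together with the uniform bound on $|b_\te(0)|$ over the compact $\tte$) and using Young's inequality on the cross term $\ga \langle Z^{\te,\ga}_{k\ga}, b_\te(Z^{\te,\ga}_{k\ga})\rangle$, one should obtain
\begin{equation*}
\be[|Z^{\te,\ga}_{(k+1)\ga}|^2] \le (1 - \al\ga + C\ga^2)\,\be[|Z^{\te,\ga}_{k\ga}|^2] + C\ga^{2H},
\end{equation*}
where the last term uses $\be[|\Delta_k \tilde B|^2] = \ga^{2H}$. For a sufficiently small $\ga_0$ the multiplicative factor is bounded by $e^{-\al\ga/2}$, and a straightforward induction yields $\limsup_N \be[|Z^{\te,\ga}_{N\ga}|^2] \le C$ uniformly in $\te$ and $\ga \in (0,\ga_0]$. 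Higher moments are obtained by an analogous computation on $|Z^{\te,\ga}_{(k+1)\ga}|^{p}$ via the formula $|x+h|^{p} \le |x|^{p} + p|x|^{p-2}\langle x,h\rangle + C_p(|x|^{p-2}|h|^{2} + |h|^{p})$, with the Gaussian tails of the fBm increments absorbing the remainder. An alternative route --- matching the proof of Proposition \ref{prop:controlmoment}$(i)$ --- is to compare $Z^{\te,\ga}$ with the Euler discretization of a fractional Ornstein--Uhlenbeck process.

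For item $(ii)$, the strategy is a covering argument on $\tte$ combined with a discrete version of Proposition \ref{prop:bnd-l2norm-deviation-Y-te}. Specifically, for any $\ep>0$ one picks an $\ep$-net $\tte^{(\ep)} \subset \tte$ of cardinality $M_\ep \le C\ep^{-q}$ and writes
\begin{equation*}
\sup_{\te\in\tte}|Z^{\te,\ga}_{N\ga}| \le \max_{\te' \in \tte^{(\ep)}} |Z^{\te',\ga}_{N\ga}| + \sup_{|\te_1-\te_2|\le\ep}|Z^{\te_1,\ga}_{N\ga} - Z^{\te_2,\ga}_{N\ga}|.
\end{equation*}
The first term is handled by item $(i)$ through $\be[\max_{\te'}|Z^{\te',\ga}_{N\ga}|^p] \le M_\ep \sup_\te \be[|Z^{\te,\ga}_{N\ga}|^p]$, while the Lipschitz-in-$\te$ control for the second term follows from repeating the Gronwall computation of Proposition \ref{prop:bnd-l2norm-deviation-Y-te} at the level of the discrete recursion, using \HUNW{} to absorb the quadratic term and item $(i)$ to obtain uniformly bounded moments of $Z^{\te_2,\ga}_{k\ga}$. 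Optimizing $\ep$ in terms of $\ga$ and noting that the step-by-step fBm increments contribute factors of order $\ga^H$ yields the announced scaling $\ga^{p(1-H)}$. The main obstacle here will be to keep the Lipschitz constant in $\te$ uniform in $N$, which requires using the contractivity $1-\al\ga + C\ga^2 < 1$ (for $\ga \le \ga_0$) rather than the naive exponential blow-up $e^{LN\ga}$ of the Gronwall bound.

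For item $(iii)$, I would reproduce the argument of Proposition \ref{prop:controlmoment}$(ii)$ in discrete time: writing $\tilde{X}^\ga$ for the stationary Euler discretization of the $\te=1$ fractional Ornstein--Uhlenbeck process \eqref{eq:FOU1} and $X^\ga$ for its non-stationary counterpart started from $y_0$, decompose
\begin{equation*}
\frac{1}{N}\sum_{k=0}^{N-1}|Z^{\te,\ga}_{k\ga}|^{p} \le C_p\Bigl(\frac{1}{N}\sum_{k=0}^{N-1}|Z^{\te,\ga}_{k\ga}-X^\ga_{k\ga}|^{p} + \frac{1}{N}\sum_{k=0}^{N-1}|X^\ga_{k\ga}-\tilde X^\ga_{k\ga}|^{p} + \frac{1}{N}\sum_{k=0}^{N-1}|\tilde X^\ga_{k\ga}|^{p}\Bigr).
\end{equation*}
The third term is almost surely bounded by ergodicity of the stationary Gaussian sequence $(\tilde X^\ga_{k\ga})_{k\ge 0}$ (via Maruyama's criterion, since the covariance of the discretized fOU decays to $0$). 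The second term decays geometrically in $k$ as in the continuous case. The first term is handled by the discrete analog of \eqref{eq:OU-SDE_comparison}: using \HUNW{} with $\ga$-Gronwall one shows
\begin{equation*}
|Z^{\te,\ga}_{k\ga}-X^\ga_{k\ga}|^{p} \le C_p \sum_{j=0}^{k-1}\ga\,(1-c\ga)^{k-1-j}\bigl(1+|X^\ga_{j\ga}|^{pr}\bigr),
\end{equation*}
and a Cesaro-type argument combined with the already-proved bound on $\frac{1}{N}\sum|\tilde X^\ga_{k\ga}|^{pr}$ finishes the job. The uniformity in $\te \in \tte$ is inherited from the uniformity of the constants in \HUNW.
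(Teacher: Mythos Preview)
Your plan for item $(iii)$, and what you call the ``alternative route'' for item $(i)$, are essentially what the paper does: compare $Z^{\te,\ga}$ with the Euler scheme $\Sigma$ of the Ornstein--Uhlenbeck process (same noise, $\te$-free drift $-x$), control the noiseless difference $|Z^{\te,\ga}-\Sigma|$ via a contractive recursion, and handle $\Sigma$ separately via its Gaussianity. The paper in fact uses this single comparison for all three items, which is cleaner and avoids the difficulties below.

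\medskip

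\textbf{Gap in the direct argument for $(i)$.} The expansion of $|Z^{\te,\ga}_{(k+1)\ga}|^2$ contains the cross term $2\langle Z^{\te,\ga}_{k\ga}+\ga b_\te(Z^{\te,\ga}_{k\ga}),\,\si\Delta_k\tilde B\rangle$. For $H\neq 1/2$ the increment $\Delta_k\tilde B$ is \emph{not} independent of the past, hence of $Z^{\te,\ga}_{k\ga}$, and this term does not vanish under expectation. If you bound it via Young's inequality $2|\langle z,\si\Delta_k\tilde B\rangle|\le\varepsilon|z|^2+C\varepsilon^{-1}|\Delta_k\tilde B|^2$ with $\varepsilon$ of order $\ga$ (needed to preserve the contraction factor), the remainder becomes $C\ga^{2H-1}$, and the fixed point of the recursion is of order $\ga^{2(H-1)}$, which blows up as $\ga\to0$. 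This destroys the uniformity in $\ga$ claimed in $(i)$. The comparison with $\Sigma$ removes the problem because $Z^{\te,\ga}$ and $\Sigma$ share the same noise increments, so the difference $Z^{\te,\ga}-\Sigma$ evolves without noise, while the moments of $\Sigma$ (which is explicitly Gaussian) are dealt with by a direct computation.

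\medskip

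\textbf{Gap in the covering argument for $(ii)$.} The statement is made under $\HUNW$, where the inward estimate reads $\langle b_\te(x)-b_\te(y),x-y\rangle\le\beta-\alpha|x-y|^2$ with $\beta>0$ in general. Applying this to the recursion for $d_k:=Z^{\te_1,\ga}_{k\ga}-Z^{\te_2,\ga}_{k\ga}$ yields
\[
|d_{k+1}|^2\le(1-\alpha\ga+C\ga^2)|d_k|^2+2\ga\beta+C\ga|\te_1-\te_2|^2\bigl(1+|Z^{\te_2,\ga}_{k\ga}|^{2r}\bigr),
\]
so iterating gives $|d_k|^2\le C\beta/\alpha+C|\te_1-\te_2|^2(\cdots)$. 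The $\beta/\alpha$ floor does \emph{not} vanish as $|\te_1-\te_2|\to0$, so you obtain no Lipschitz modulus, and the net cannot be closed. If instead you use only the Lipschitz bound on $b_\te$, you recover the $e^{LN\ga}$ blowup you wished to avoid. In short, under $\HUNW$ a uniform-in-$N$ Lipschitz-in-$\te$ estimate for $Z^{\te,\ga}_{N\ga}$ is simply unavailable by this route. The paper again bypasses this: the bound on $|Z^{\te,\ga}_{k\ga}-\Sigma_{k\ga}|^{2p}$ obtained from the contractive recursion depends on the right-hand side only through $\Sigma$, which is $\te$-independent, so passing to $\sup_{\te\in\tte}$ is free and no covering is needed.
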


The strategy for the proof of Proposition \ref{prop:control_moment_Z} is based on a comparison between $Z^{\te,\ga}$ and the Euler scheme $\Sigma$ related to the Ornstein-Uhlenbeck process $X$ given by \eqref{eq:FOU1}. Namely define $\Sigma$ recursively by $\Sigma_0=y_0$ and:
\begin{equation}\label{eq:rec_Euler_scheme_O-U}
\Sigma_{(k+1)\ga}=(1-\ga)\Sigma_{k\ga}+\sigma((B_{(k+1)\ga}-B_{k\ga})),\quad\forall k\geqslant0.
\end{equation}
We first prove some bounds on $\Sigma$ itself, which are labeled in the following lemma

\begin{lemma}\label{prop:control_moment_O-U} Let $\Sigma$ be the Euler scheme defined by \eqref{eq:rec_Euler_scheme_O-U}. There exists $\ga_0>0$ such that for all $p>0$ we have
\begin{itemize}
\item[$(i)$] $\underset{\gamma\in(0,\gamma_0]}{\sup}~\limsup\limits_{N\to+\infty}~\be[|\Sigma_{N\gamma}|^{p}]<+\infty$.
\smallskip
\item[$(ii)$] $\underset{\gamma\in(0,\gamma_0]}{\sup}~\underset{N\geqslant0}{\sup}~\ga^{p(1-H)}\be[|\Sigma_{N\gamma}|^{p}]<+\infty$.
\smallskip
\item[$(iii)$] Fix $\ga\in(0,\ga_0]$. Then $\underset{N\geqslant1}{\sup}~\frac{1}{N}\sum_{k=0}^{N-1}|\Sigma_{k\gamma}|^{p}<+\infty$ $a.s.$
\end{itemize}
\end{lemma}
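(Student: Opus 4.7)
The proof rests on an explicit solution of the linear recursion~\eqref{eq:rec_Euler_scheme_O-U}. A direct iteration yields
$$
\Sigma_{N\ga}=(1-\ga)^{N}y_0+\si\sum_{k=0}^{N-1}(1-\ga)^{N-1-k}\Delta_k B,\qquad \Delta_k B:=B_{(k+1)\ga}-B_{k\ga}.
$$
In particular $\Sigma_{N\ga}$ is Gaussian, so by Gaussian hypercontractivity (equivalence of all $L^p$-norms) it suffices to control the $L^2$-norm, up to a constant depending only on $p$. This immediately reduces items (i) and (ii) to the analysis of the variance of $\Sigma_{N\ga}$.

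The proof of (ii) is straightforward. By Minkowski's inequality and \eqref{eq:var-increments-fbm}, one has $\|\Delta_k B\|_{L^p(\Omega)}\leq c_p \ga^H$, hence
$$
\|\Sigma_{N\ga}\|_{L^p}\leq |y_0|+c_p|\si|\ga^H\sum_{k=0}^{N-1}(1-\ga)^{N-1-k}\leq |y_0|+c_p|\si|\ga^{H-1},
$$
uniformly in $N\geq 0$ and $\ga\in(0,\ga_0]$ for any $\ga_0<1$. This is exactly the content of~(ii).

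For item~(i), I would exploit cancellations among the fBm increments more finely. Using the standard formula for the covariance of fBm increments,
$$
\mathrm{Var}(\Sigma_{N\ga})=(1-\ga)^{2N}|y_0|^2+\frac{d\,|\si|^{2}\ga^{2H}}{2}\sum_{j,k=0}^{N-1}(1-\ga)^{2N-2-j-k}\,\rho(|j-k|),
$$
with $\rho(\ell):=(\ell+1)^{2H}+|\ell-1|^{2H}-2\ell^{2H}$ satisfying $|\rho(\ell)|\leq C(1\wedge \ell^{2H-2})$. Using $(1-\ga)^{m}\leq e^{-m\ga}$, the double sum can be compared (after the change of variable $u=k\ga$, $v=j\ga$) to a Riemann approximation of the double integral $\int_{0}^{\infty}\!\!\int_{0}^{\infty}e^{-(u+v)}|u-v|^{2H-2}du\,dv$, which is (up to constants) the stationary variance of the continuous time fractional OU process~\eqref{eq:FOU1} and is finite for every $H\in(0,1)$, see e.g.~\cite{CKM}. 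Care must be taken to handle $H<1/2$ (summable $\rho$) and $H>1/2$ (non-summable $\rho$) separately, but in either regime the geometric weight $(1-\ga)^{2N-2-j-k}$ ensures that $\limsup_{N}\mathrm{Var}(\Sigma_{N\ga})$ is finite, with a bound uniform in $\ga\in(0,\ga_0]$. This is the main technical obstacle.

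Finally, item~(iii) follows along the lines of Proposition~\ref{prop:controlmoment}(ii). I would fix $\ga\in(0,\ga_0]$ and introduce the stationary Gaussian scheme $\tilde{\Sigma}^{\ga}$ driven by the same fBm (whose existence follows from item~(i) via a Kolmogorov-type extension argument). Subtracting the recursions gives $|\Sigma_{k\ga}-\tilde{\Sigma}^{\ga}_{k\ga}|=(1-\ga)^{k}|y_0-\tilde{\Sigma}^{\ga}_{0}|$, so the Cesàro average of $|\Sigma_{k\ga}-\tilde{\Sigma}^{\ga}_{k\ga}|^{p}$ tends to $0$ almost surely. It remains to analyse $\frac{1}{N}\sum_{k=0}^{N-1}|\tilde{\Sigma}^{\ga}_{k\ga}|^{p}$. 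The stationary Gaussian sequence $(\tilde{\Sigma}^{\ga}_{k\ga})_{k\geq 0}$ has a covariance function that inherits the geometric decay factor $(1-\ga)^{|k-j|}$, which dominates the polynomial covariance of the fBm increments; in particular $\mathrm{Cov}(\tilde{\Sigma}^{\ga}_{0},\tilde{\Sigma}^{\ga}_{k\ga})\to 0$ as $k\to\infty$, so the Maruyama criterion applies and $(\tilde{\Sigma}^{\ga}_{k\ga})$ is ergodic. The ergodic theorem then yields $\frac{1}{N}\sum_{k=0}^{N-1}|\tilde{\Sigma}^{\ga}_{k\ga}|^{p}\to \be[|\tilde{\Sigma}^{\ga}_{0}|^{p}]<\infty$ a.s., which concludes the proof of~(iii).
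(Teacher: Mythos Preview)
Your argument for (ii) matches the paper's exactly. For (i), the paper simply cites \cite[Lemma~2]{CP} for the $L^2$-bound and then invokes Gaussianity, whereas you sketch a direct variance computation; your route is more self-contained, and the sketch is sound (the caveat about splitting $H<1/2$ and $H>1/2$ is the right one, since the integral you wrote diverges on the diagonal when $H<1/2$).

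The substantive difference is in (iii). You mirror the strategy of Proposition~\ref{prop:controlmoment}(ii): introduce a stationary version $\tilde\Sigma^{\ga}$, kill the difference by the geometric factor $(1-\ga)^k$, and conclude via Maruyama and Birkhoff. The paper instead works directly from the explicit formula: applying Jensen to $|\Sigma_{k\ga}|^{2p}$ gives a bound of the form $C+C'\sum_{j=0}^{k-1}(1-\ga)^{j}|\Delta_{k-j}|^{2p}$, and after Ces\`aro averaging and Fubini one is reduced to $\frac{1}{N}\sum_{\ell}|\Delta_{\ell+1}|^{2p}$, which converges a.s.\ by ergodicity of the fBm increments themselves. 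This is shorter and avoids constructing the stationary scheme (which requires a two-sided fBm) and checking its covariance decay. One small inaccuracy in your version: the covariance of $(\tilde\Sigma^{\ga}_{k\ga})$ does \emph{not} inherit a geometric factor $(1-\ga)^{|k-j|}$; for $H>1/2$ it decays only like $|k-j|^{2H-2}$. The conclusion you actually need---that the covariance tends to $0$, so Maruyama applies---is still correct, but the stated reason is not.
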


\begin{proof} By Cauchy-Schwarz inequality, it is enough to show the three results for $2p$ instead of~$p$. We treat again the three items separately.

$(i)$ Let $\ga_0\in(0,1)$. By Lemma 2 in \cite{CP}, we know that
\begin{equation}\label{eq:moment_2_O-U}
\sup\limits_{\ga\in(0,\ga_0]}~\limsup\limits_{N\to+\infty}~\be[|\Sigma_{N\ga}|^2]<+\infty.
\end{equation}
Since $\Sigma$ is a Gaussian process, the extension from a second order moment to a moment of order $2p$ is trivial. We thus omit details for sake of conciseness.

$(ii)$ Let $\ga_0\in(0,1)$ and $\ga\in(0,\ga_0)$. First note that by induction, we have for all $k\geq1$: 
\begin{equation}\label{eq:Euler_scheme_O-U}
\Sigma_{k\ga}=(1-\ga)^ky_0+\sigma\sum_{j=0}^{k-1}(1-\ga)^j\Delta_{k-j},
\end{equation}
where $\Delta_{k-j}:=(B_{(k-j)\ga}-B_{(k-1-j)\ga})$.
In equation \eqref{eq:Euler_scheme_O-U}, we apply the triangular inequality for the norm in $L^2(\Omega)$ and we invoke the fact that $\be[(\Delta^i_{k})^{2}]^{1/2}=\ga^{H}$ for all $i\in\{1,\dots,d\}$. This yields
\begin{align*}
\be[|\Sigma_{k\ga}|^{2}]^{1/2}&\leqslant(1-\ga)^k|y_0|+|\sigma|\sum_{j=0}^{k-1}(1-\ga)^j\be[|\Delta_{k-j}|^{2}]^{1/2}\\
&\leqslant (1-\ga)^k|y_0|+|\sigma|\sum_{j=0}^{k-1}(1-\ga)^j \sum_{i=1}^d\be[(\Delta^i_{k-j})^{2}]^{1/2}\\
&\leqslant (1-\ga)^k|y_0|+d|\sigma|\ga^{H}\sum_{j=0}^{k-1}(1-\ga)^j \\
&\leqslant |y_0|+d|\sigma|c\ga^{H-1}.
\end{align*}
Therefore, we easily get that
\begin{equation*}
\gamma^{2-2H}\be[|\Sigma_{k\ga}|^{2}]\leqslant 2\ga^{2-2H}\lp|y_0|^{2}+d^2|\sigma|^2\ga^{2H-2}\rp\leqslant2\ga_0^{2-2H}|y_0|^{2}+2d^2|\sigma|^2
\end{equation*}
which gives the following bound
\begin{equation*}
\underset{\gamma\in(0,\gamma_0]}{\sup}~\underset{N\geqslant0}{\sup}~\ga^{2-2H}\be[|\Sigma_{N\gamma}|^{2}]<+\infty.
\end{equation*}
Exactly as in item $(i)$, we now resort to the Gaussian nature of $\Sigma$ in order to conclude that
\begin{equation}\label{eq:result_for_O-U}
\underset{\gamma\in(0,\gamma_0]}{\sup}~\underset{N\geqslant0}{\sup}~\ga^{2p(1-H)}\be[|\Sigma_{N\gamma}|^{2p}]<+\infty,\quad\text{ for all }p\geq1.
\end{equation}

$(iii)$ We start from identity \eqref{eq:Euler_scheme_O-U}. Applying Jensen's inequality we deduce that 
\begin{align}\label{eq:Jensen_OU}
|\Sigma_{k\ga}|^{2p}&\leqslant 2^{2p-1}(1-\ga)^{2kp}|y_0|^{2p}+\lp\frac{2}{\ga}\rp^{2p-1}|\sigma|^{2p}\sum_{j=0}^{k-1}(1-\ga)^j|\Delta_{k-j}|^{2p}\nonumber\\
&\leqslant 2^{2p-1}|y_0|^{2p}+\lp\frac{2}{\ga}\rp^{2p-1}|\sigma|^{2p}\sum_{j=0}^{k-1}(1-\ga)^j|\Delta_{k-j}|^{2p}.
\end{align}
We have thus obtained that
\begin{align*}
\frac{1}{N}\sum_{k=0}^{N-1}|\Sigma_{k\ga}|^{2p}\leqslant (2^{2p-1}+1)|y_0|^{2p}+\lp\frac{2}{\ga}\rp^{2p-1}|\sigma|^{2p}\frac{1}{N}\sum_{k=1}^{N-1}\sum_{j=0}^{k-1}(1-\ga)^j|\Delta_{k-j}|^{2p}.
\end{align*}
Therefore our claim $(iii)$ is reduced to show that
\begin{equation}\label{eq:moy_emp_Delta}
\sup_{N\geqslant1}~\frac{1}{N}\sum_{k=1}^{N-1}\sum_{j=0}^{k-1}(1-\ga)^{j}|\Delta_{k-j}|^{2p}<+\infty.
\end{equation}
In order to prove \eqref{eq:moy_emp_Delta} we make a change of variable $\ell=k-1-j$ and apply Fubini's theorem. This gives
\begin{align*}
\frac{1}{N}\sum_{k=1}^{N-1}\sum_{j=0}^{k-1}(1-\ga)^{j}|\Delta_{k-j}|^{2p}&=\frac{1}{N}\sum_{\ell=0}^{N-2}|\Delta_{\ell+1}|^{2p}\sum_{k=\ell+1}^{N-1}(1-\ga)^{k-1-\ell}\\
&\leqslant\frac{1}{\ga}\frac{1}{N}\sum_{\ell=0}^{N-2}|\Delta_{\ell+1}|^{2p}\leqslant\frac{d^{p'-1}}{\ga}\sum_{i=1}^d\frac{1}{N}\sum_{\ell=0}^{N-2}(\Delta^i_{\ell+1})^{2p}.
\end{align*}
Since the sequence $(\Delta^i_n)_{n\geqslant1}$ is ergodic for every $i\in\{1,\dots,d\}$, we have 
$$\frac{1}{N}\sum_{\ell=0}^{N-2}(\Delta^i_{\ell+1})^{2p}\underset{N\to+\infty}{\longrightarrow}\be[(\Delta^i_{1})^{2p}]=c_p\ga^{2pH}$$
and our claim \eqref{eq:moy_emp_Delta} follows. This finishes the proof.
\end{proof}

With those preliminary considerations on $\Sigma$ in hand, we can now prove our Proposition \ref{prop:control_moment_Z}.

\begin{proof}[Proof of Proposition \ref{prop:control_moment_Z}] As previously, we will prove the result for $2p$ instead of $p$. Moreover, for sake of simplicity, we write $Z^\te$ instead of $Z^{\te,\ga}$. According to the dynamics \eqref{eq:rec_Euler_scheme_O-U} for $\Sigma$ and \eqref{eq:Euler_scheme} for $Z^{\te}$, we have for all $k\geqslant1$,
\begin{multline}\label{eq:rec_norm2_euler-OU}
|Z^\te_{k\gamma}-\Sigma_{k\gamma}|^2
=|Z^\te_{(k-1)\gamma}-\Sigma_{(k-1)\gamma}|^2\\
+2\ga\langle Z^\te_{(k-1)\gamma}-\Sigma_{(k-1)\gamma},\Sigma_{(k-1)\ga}+b_\te(Z^\te_{(k-1)\ga})\rangle +\ga^2|\Sigma_{(k-1)\ga}+b_\te(Z^\te_{(k-1)\ga})|^2.
\end{multline}
In order to treat the second term in \eqref{eq:rec_norm2_euler-OU}, we recast it as 
\begin{align*}
\langle &Z^\te_{(k-1)\ga}-\Sigma_{(k-1)\gamma},\Sigma_{(k-1)\ga}+b_\te(Z^\te_{(k-1)\ga})\rangle\\
&=\langle Z^\te_{(k-1)\ga}-\Sigma_{(k-1)\gamma},b_\te(Z^\te_{(k-1)\ga})-b_\te(\Sigma_{(k-1)\ga})\rangle+\langle Z^\te_{(k-1)\ga}-\Sigma_{(k-1)\gamma},\Sigma_{(k-1)\ga}+b_\te(\Sigma_{(k-1)\ga})\rangle
\end{align*}
Then, we invoke our condition $\HUNW$ to bound the first term in the right hand side above and Young's inequality for the second term. Similar manipulations can be performed for the third term in \eqref{eq:rec_norm2_euler-OU}. We let the patient reader check that for some arbitrary parameters $\varepsilon,\varepsilon'$ we get
\begin{align*}
|Z^\te_{k\gamma}&-\Sigma_{k\gamma}|^2\\
&\leqslant 2\beta\ga+(1-2\ga\alpha)|Z^\te_{(k-1)\gamma}-\Sigma_{(k-1)\gamma}|^2+\frac{\ga}{\varepsilon}|Z^\te_{(k-1)\gamma}-\Sigma_{(k-1)\gamma}|^2\\
&+\ga\varepsilon|\Sigma_{(k-1)\ga}+b_\te(\Sigma_{(k-1)\ga})|^2
+\frac{\ga^2}{2\varepsilon'}|\Sigma_{(k-1)\ga}+b_\te(\Sigma^\te_{(k-1)\ga})|^2+\frac{\ga^2L^2\varepsilon'}{2}|Z^\te_{(k-1)\gamma}-\Sigma_{(k-1)\gamma}|^2.
\end{align*}
We now choose $\varepsilon=\frac{1}{\alpha}$ and $\varepsilon'=\frac{\alpha}{\ga L^2}$, which yields
\begin{align*}
|Z^\te_{k\gamma}-\Sigma_{k\gamma}|^2\leqslant2\beta\ga+\lp1-\frac{\ga\alpha}{2}\rp|Z^\te_{(k-1)\gamma}-\Sigma_{(k-1)\gamma}|^2+\lp\frac{\gamma}{\alpha}+\frac{\ga^3 L^2}{2\alpha}\rp|\Sigma_{(k-1)\ga}+b_\te(\Sigma_{(k-1)\ga})|^2.
\end{align*}
Observe that under $\HUNW$, $b_\te$ is sublinear. Hence there exists $C>0$ depending only on $\alpha,\beta,L$ such that
\begin{equation}\label{eq:induction_distance2_Z_O-U}
|Z^\te_{k\gamma}-\Sigma_{k\gamma}|^2\leqslant\lp1-\ga\tilde{\alpha}\rp|Z^\te_{(k-1)\gamma}-\Sigma_{(k-1)\gamma}|^2+C\ga\lp 1+\ga^2\rp\lp 1+|\Sigma_{(k-1)\ga}|^2\rp
\end{equation}
where we have set $\tilde{\alpha}:=\alpha/2$ in order to ease our next computations. We now choose $0<\ga_0<1/\tilde{\alpha}$. By a direct induction, it comes
\begin{equation*}
|Z^\te_{k\gamma}-\Sigma_{k\gamma}|^2\leqslant C\ga\lp 1+\ga^2\rp \sum_{j=0}^{k-1}(1-\ga\tilde{\alpha})^{k-1-j} \lp 1+|\Sigma_{j\ga}|^2\rp.
\end{equation*}
Finally, applying Jensen's inequality similarly to what is done in \eqref{eq:Jensen_OU}, we end up with
\begin{align}\label{eq:control_between_Z_O-U_scheme}
|Z^\te_{k\gamma}-\Sigma_{k\gamma}|^{2p}&\leqslant C^p\frac{\ga}{\tilde{\alpha}^{p'-1}}\lp 1+\ga^2\rp^p \sum_{j=0}^{k-1}(1-\ga\tilde{\alpha})^{k-1-j} \lp 1+|\Sigma_{j\ga}|^2\rp^p\nonumber\\
&\leqslant C^p\lp\frac{2}{\tilde{\alpha}}\rp^{p-1}\ga\lp 1+\ga^2\rp^p \sum_{j=0}^{k-1}(1-\ga\tilde{\alpha})^{k-1-j} \lp 1+|\Sigma_{j\ga}|^{2p}\rp.
\end{align}
With those preliminary considerations in hand, we now prove the three items in Proposition \ref{prop:control_moment_Z} separately.

$(i)$ We start from relation \eqref{eq:induction_distance2_Z_O-U}. Since $\ga\in(0,\ga_0]$, $C(1+\ga^2)\leq \tilde{C}$ for a constant $\tilde{C}>0$
$$|Z^\te_{k\gamma}-\Sigma_{k\gamma}|^2\leqslant\lp1-\ga\tilde{\alpha}\rp|Z^\te_{(k-1)\gamma}-\Sigma_{(k-1)\gamma}|^2+\ga\tilde{\alpha}\frac{\tilde{C}}{\tilde{\alpha}}\lp 1+|\Sigma_{(k-1)\ga}|^2\rp.$$
We now invoke the convexity of $x\mapsto|x|^p$ in order to get
\begin{align*}
|Z^\te_{k\gamma}-\Sigma_{k\gamma}|^{2p}&\leqslant\lp1-\ga\tilde{\alpha}\rp|Z^\te_{(k-1)\gamma}-\Sigma_{(k-1)\gamma}|^{2p}+\ga\tilde{\alpha}\frac{\tilde{C}^p}{\tilde{\alpha}^p}\lp 1+|\Sigma_{(k-1)\ga}|^2\rp^{p}\\
&\leqslant\lp1-\ga\tilde{\alpha}\rp|Z^\te_{(k-1)\gamma}-\Sigma_{(k-1)\gamma}|^{2p}+\tilde{C}_p\ga\lp 1+|\Sigma_{(k-1)\ga}|^{2p}\rp.
\end{align*}
Then we take expectations and upper limits in $k$, which gives
$$\limsup\limits_{k\to +\infty}~\be[|Z^\te_{k\gamma}-\Sigma_{k\gamma}|^{2p}]\leqslant\frac{\tilde{C}_p}{\tilde{\alpha}}\lp 1+\limsup\limits_{k\to +\infty}~\be[|\Sigma_{k\ga}|^{2p}]\rp.$$
With this inequality in hand and Proposition \ref{prop:control_moment_O-U} $(i)$, inequality $(i)$ is proved.

$(ii)$ First, we take successively the supremum over $\tte$ and the expectation in \eqref{eq:control_between_Z_O-U_scheme}. We then multiply by $\ga^{2p(1-H)}$ and perform the same kind of manipulations as for $(i)$. With the help of Proposition \ref{prop:control_moment_O-U} $(ii)$, we get that 
\begin{equation}\label{eq:cc}
\underset{\gamma\in(0,\gamma_0]}{\sup}~\underset{k\geqslant0}{\sup}~\ga^{2p(1-H)}\be[\underset{\te\in\tte}{\sup}|Z^\te_{k\gamma}-\Sigma_{k\ga}|^{2p}]<+\infty.
\end{equation}
Having achieved the upper bound \eqref{eq:cc}, our claim $(ii)$ now stems from another direct application of Proposition \ref{prop:control_moment_O-U} $(ii)$.

$(iii)$ Here, we just need to sum \eqref{eq:control_between_Z_O-U_scheme} for $k$ from $0$ to $N-1$ and divide by $N$. Then, we use Fubini theorem on the right hand side combined with Proposition \ref{prop:control_moment_O-U} $(iii)$ to conclude that $\underset{\te\in\tte}{\sup}~\underset{N\geqslant1}{\sup}~\frac{1}{N}\sum_{k=0}^{N-1}|Z^\te_{k\gamma}-\Sigma_{k\ga}|^{2p}<+\infty$. Finally, by using again Proposition \ref{prop:control_moment_O-U} $(iii)$, the result follows.

\end{proof}
\subsection{Proof of Proposition \ref{prop:ergodicSDE}}\label{proof:weakconvergenceqsjkl} We will focus on the proof of \eqref{eq:cvgce-occp-measure} only, the discrete counterpart of Proposition \ref{prop:ergodicSDE} being obtained by similar argument. We also note that \eqref{eq:cvgce-occp-measure} cannot be obtained as a consequence of Birkhoff's theorem. Indeed, Birkhoff's theorem would yield a $\bar{\nu}_\te$-a.s. convergence in \eqref{eq:cvgce-occp-measure}, where $\bar{\nu}_\te$ is the invariant measure alluded to in Remark \ref{rem:invariant_dist}. In order to avoid any reference to the support of the invariant measure $\bar{\nu}_\te$, our relation \eqref{eq:cvgce-occp-measure} gives a weak convergence result which does not include any condition on the initial value of $Y^\te_0$. 

In order to prove \eqref{eq:cvgce-occp-measure}, we first observe that according to \eqref{eq:contmomentocccont}, the family of measures $\{\frac{1}{t}\int_0^t \delta_{Y^\te_{s}} ds; t\geq0\}$ is $a.s.$ tight. Therefore it only remains to prove that any limit is $\nu_\te$. We now focus on the convergence of $\frac{1}{t}\int_0^t \delta_{Y^\te_{s}} ds$. To this aim, we consider an additional family of probability measures on ${\cal C}([0,\infty),\ER^d)$ in the following way:
$$\pi^\te_t=\frac{1}{t}\int_0^t \delta_{Y^\te_{s+.}} ds$$
where $Y^\te_{s+.}=(Y^\te_{s+u})_{u\geq0}$.
We will first prove that $\{\pi^\te_t; t\geq0\}$ is an $a.s.$ tight family in the set ${\cal M}_1({\cal C}([0,\infty),\ER^d))$, where we recall that the notation ${\cal M}_1$ is introduced in Section \ref{subsection:notation}. The tightness of $\{\pi^\te_t; t\geq0\}$ can be handled in the following way: we have seen that $\{\frac{1}{t}\int_0^t \delta_{Y^\te_{s}} ds; t\geq0\}$ is $a.s.$ tight in $\mathcal{M}_1(\R^d)$. Therefore a classical criterion (see $e.g.$ \cite[Theorem 8.3]{billingsley}) ensures that $\{\pi^\te_t; t\geq0\}$ is $a.s.$ tight if for every positive $T$, $\eta$ and $\varepsilon$, there exists $\delta>0$ such that for all $t_0\in[0,T]$,
\begin{equation}\label{eq:criterion_tightness}
\limsup_{t\rightarrow+\infty}\frac{1}{t}\int_0^t \frac{1}{\delta}{\bf 1}_{\{\sup_{u\in[t_0,t_0+\delta]} |Y^\te_{s+u}-Y^\te_{s+t_0}|\ge \varepsilon\}} ds\le \eta\quad a.s.
\end{equation}
Moreover, inequality \eqref{eq:criterion_tightness} holds true as long as there exist some positive $r$ and $\rho$ such that 
\begin{equation}\label{tighttoprove}
\limsup_{t\rightarrow+\infty}\frac{1}{t}\int_0^t \sup_{u\in[t_0,t_0+\delta]} |Y^\te_{s+u}-Y^\te_{s+t_0}|^r ds\le C_{r,T}\delta^{1+\rho}\quad a.s.
\end{equation}
Let us now prove \eqref{tighttoprove}. On the interval $[t_0,t_0+\delta]$, we have
\begin{equation}\label{eq:bbb}
|Y^\te_{s+u}-Y^\te_{s+t_0}| \le \int_{t_0}^{t_0+\delta} |b_\te(Y^\te_{s+u})|du+\sup_{u\in [t_0,t_0+\delta]} |B_{s+u}-B_{s+t_0}|.
\end{equation}
Using Jensen's inequality and Fubini's Theorem, we get that for any $r>0$,
$$\frac{1}{t}\int_0^t \left(\int_{t_0}^{t_0+\delta} |b_\te(Y^\te_{s+u})|du\right)^r ds\le \delta^{r-1}\left(\frac{1}{t}\int_{0}^{t+t_0+\delta} |b(Y^\te_s)|^r ds\right).$$
Furthermore, since $b_\te$ is sublinear according to $\HUNW$, it follows from \eqref{eq:contmomentocc2}  that
\begin{equation}\label{eq:ccc}
\limsup_{t\rightarrow+\infty}\frac{1}{t}\int_0^t \left(\int_{t_0}^{t_0+\delta} |b_\te(Y^\te_{s+u})|du\right)^r ds\leq C\delta^{r-1}
\end{equation}
for a constant $C>0$.
On the other hand, by the ergodicity of the increments of the 
fBm and its self-similarity, 
\begin{equation}\label{eq:ddd}
\frac{1}{t}\int_0^t\sup_{u\in [t_0,t_0+\delta]} |B_{s+u}-B_{s+t_0}|^r ds\xrightarrow{t\rightarrow+\infty} \ES[\sup_{u\in[0,\delta]} |B_u|^r]=C_r \delta^{Hr},
\end{equation}
where the limit in \eqref{eq:ddd} holds in the $a.s.$ sense.
Choosing $r>\sup\{2,1/H\}$ in \eqref{eq:ccc} and \eqref{eq:ddd}, then plugging \eqref{eq:ccc} and \eqref{eq:ddd} into \eqref{eq:bbb}, we get that \eqref{tighttoprove} is satisfied. We have thus proved the $a.s.$ tightness of $\{\pi^\te_t; t\geq0\}$.\\
The second step is then to show that any limiting distribution  of $\left(\frac{1}{t}\int_0^t \delta_{Y^\te_{s+.}} ds\right)$ is necessarily the law of a stationary solution $\bar{Y}^\te$ to SDE  \eqref{eq:sdetheta}.  This step in turn implies the result by uniqueness of the stationary solutions.\\

Let $\left(\frac{1}{t_n}\int_0^{t_n} \delta_{Y^\te_{s+.}} ds\right)_n$ be a (pathwise) convergent sequence with limiting distribution $\mu$ where $\{t_n; n\geq1\}$ is an increasing sequence converging to $+\infty$.
We first prove that $\mu$ is the law of a stationary process. Namely, for any bounded functional $F:{\cal C}([0,\infty),\ER^d)\rightarrow\ER$, we have 
 $$\mu(F\circ \theta_T)-\mu(F)=\lim_{n\rightarrow+\infty}\frac{1}{t_n}\int_0^{t_n} [F( Y^\te_{s+T+.}) -F(Y^\te_{s+.})] ds.$$
However, a simple change of variables reveals that $a.s.$ we have
$$\lim\limits_{n\to+\infty}\frac{1}{t_n}\int_0^{t_n} [F( Y^\te_{s+T+.})-F(Y^\te_{s+.})] ds=\lim\limits_{n\to+\infty}\frac{1}{t_n}\left(\int_{t_n}^{t_n+T} F(Y^\te_{s+.}) ds-\int_{0}^{T} F(Y^\te_{s+.}) ds\right)=0.$$
We thus easily get that $\mu$ is stationary.
Now, let us prove that $\mu$ is the law of a solution to \eqref{eq:sdetheta}. Without loss of generality, we can say that a process $(x_t)_{t\ge0}$ is a solution to \eqref{eq:sdetheta} if $x_.-x_0-\int_0^. b_\te(x_u) du$ is a fBm. In other words, we have to prove that 
$\mu\circ G^{-1}$ is the law of a fBm where $G(x)=x_.-x_0-\int_0^. b_\te(x_u) du.$ Since $G$ is continuous for the $u.s.c$ topology, it is readily checked that
$$\mu\circ G^{-1}=\lim_{n\rightarrow+\infty}\frac{1}{t_n}\int_0^{t_n} \delta_{G(Y^\te_{s+.})} ds.$$
In addition, by construction $G(Y^\te_{s+.})=B_{s+.}-B_s$. Hence the fact that $\mu\circ G^{-1}$ is the law of a fBM follows again from  the ergodicity of the increments of the fBM. \\
Summarizing our considerations, we have proved that $\mu$ is a stationary measure related to the system \eqref{eq:sdetheta}. Therefore we have $\mu=\mathcal{L}((\bar{Y}^\te_{t})_{t\geq0})$, which concludes the proof.

\section{Proof of Proposition \ref{prop:contracttt}}\label{append:B}
For sake of conciseness, we will focus on the proof of Proposition \ref{prop:contracttt} $(i)$. The proof of item $(ii)$ relies on the same kind of tools, plus the discrete computations invoked in the proof of Proposition \ref{prop:control_moment_Z}.
In order to prove item $(i)$, let us consider $t\geq1$ and a parameter $\rho>0$ to be chosen later on. An easy elaboration of \eqref{c1} shows that
\begin{equation}\label{eq:Y_te1-Y_te2}
e^{\rho t}|Y_t^{\te_1}-Y_t^{\te_2}|^2=\int_0^t e^{\rho s}\left(\rho|Y_s^{\te_1}-Y_s^{\te_2}|^2+2\langle b_{\te_1}(Y_s^{\te_1})-b_{\te_2}(Y_s^{\te_2}),Y_s^{\te_1}-Y_s^{\te_2}\rangle\right) ds.
\end{equation}
In addition, one can write
$$ b_{\te_1}(Y_s^{\te_1})-b_{\te_2}(Y_s^{\te_2})=b_{\te_1}(Y_s^{\te_1})-b_{\te_1}(Y_s^{\te_2})+b_{\te_1}(Y_s^{\te_2})-b_{\te_2}(Y_s^{\te_2}).$$
We now combine the assumption $\HUNS$ (including the contraction property, the fact $x\mapsto b_\te(x)$ is uniformly Lipschitz in $\te$ and the fact that $\partial_\te b_\te(x)$ has polynomial growth) and Young's inequality 
$|ab|\le \frac{1}{2\varepsilon}|a|^2+\frac{\varepsilon}{2}|b|^2$ for an arbitrary $\varepsilon>0$. This yields the existence of a constant $L>0$ such that
$$\langle b_{\te_1}(Y_s^{\te_1})-b_{\te_2}(Y_s^{\te_2}),Y_s^{\te_1}-Y_s^{\te_2}\rangle\le \left(-\alpha+\frac{L^2\varepsilon}{2}\right) |Y_s^{\te_1}-Y_s^{\te_2}|^2+\frac{C|\te_1-\te_2|^2(1+ |Y_s|^r)^2}{2\varepsilon}.$$
Plugging this inequality into \eqref{eq:Y_te1-Y_te2} and setting $\varepsilon=L^2/\alpha$ and $\rho=\alpha/2$, we have thus obtained
\begin{equation}\label{eq:carre}
e^{\rho t}|Y_t^{\te_1}-Y_t^{\te_2}|^2\le C|\te_1-\te_2|^2\int_0^t e^{\rho s} (1+|Y_s^{\te_2}|^{2r} ds. 
\end{equation}
Thus, using Fubini's theorem, one deduces that
\begin{align*}
\frac{1}{t}\int_0^t |Y_s^{\te_1}-Y_s^{\te_2}|^2 ds&\le \frac{C|\te_1-\te_2|^2}{t}\int_0^{t}(1+|Y_s^{\te_2}|^{2r}\int_u^{t} e^{\rho (s-u)} ds du\\
&\le {C_\rho |\te_1-\te_2|^2}\left(1+\frac{1}{t}\int_0^{t}(1+|Y_s^{\te_2}|^{2r} du\right).
\end{align*}
Hence a direct application of inequality \eqref{eq:contmomentocccont} yields the existence of a random variable $C=C(\omega)$ such that for all $(\te_1,\te_2)\in\tte^2$ we have
$$\sup_{t\ge1}\frac{1}{t}\int_0^t |Y_s^{\te_1}-Y_s^{\te_2}|^2 ds\le C|\te_1-\te_2|^2.$$
This concludes the proof of Proposition \ref{prop:contracttt} $(i)$ for a distance $d\in {\cal D}_2$. To extend the result to any $p\ge2$, one can apply Jensen's inequality to \eqref{eq:carre} and follow the same lines as for $d\in{\cal D}_2$.

\section*{Acknowledgements}
The authors would like to thank the anonymous referee for his careful reading and his
suggestions, which substantially improved the quality of the paper.

\end{document}